\tikzstyle{braid}=[thick]
\tikzset{arr/.style={circle,draw,inner sep=0.03cm}}
\tikzset{unit/.style={circle,fill,inner sep=0.05cm}}
\tikzset{empty/.style={inner sep=0pt, minimum size=0pt}}
 \numberwithin{equation}{section}
\renewcommand{\phi}{\varphi}
\renewcommand{\epsilon}{\varepsilon}
 \newtheorem{proposition}{Proposition}[section] 
 \newtheorem{lemma}[proposition]{Lemma}
 \newtheorem{theorem}[proposition]{Theorem}
 \newtheorem{corollary}[proposition]{Corollary}
 \theoremstyle{definition}
 \newtheorem{definition}[proposition]{Definition}
 \newtheorem{example}[proposition]{Example}
 \newtheorem{remark}[proposition]{Remark}
  \newtheorem{examples}[proposition]{Examples}
\newcommand{\ca}{\ensuremath{\mathcal A}\xspace}
\newcommand{\cb}{\ensuremath{\mathcal B}\xspace}
\newcommand{\cc}{\ensuremath{\mathcal C}\xspace}
\newcommand{\cs}{\ensuremath{\mathcal S}\xspace}
\newcommand{\cv}{\ensuremath{\mathcal V}\xspace}
\newcommand{\bba}{\ensuremath{\mathbb A}\xspace}
\newcommand{\myt}{t}
\def\ox{\otimes}
\def\x{\times}
\newcommand{\op}{\ensuremath{{}^{\textrm{op}}}\xspace}
\newcommand{\rev}{\ensuremath{{}^{\textrm{rev}}}\xspace}
\newcommand{\bCat}{\ensuremath{{\mathbb{C}\textnormal{at}}}\xspace}
\newcommand{\Vect}{\ensuremath{\mathbf{Vect}}\xspace}
\newcommand{\FProd}{\ensuremath{\mathbf{FProd}}\xspace}
\newcommand{\FProds}{\ensuremath{\mathrm{\FProd_s}}\xspace}
\newcommand{\RMod}{\textnormal{R-$\mathbf{Mod}$}\xspace}
\newcommand{\BMod}{\textnormal{B-$\mathbf{Mod}$}\xspace}
\newcommand{\Ab}{\textnormal{Ab}\xspace}
\newcommand{\Set}{\ensuremath{\mathbf{Set}}\xspace}
\newcommand{\Cat}{\ensuremath{\mathbf{Cat}}\xspace}
\newcommand{\TAlg}{\ensuremath{\mathbf{T}\textnormal{-}\mathbf{Alg}}\xspace}
\newcommand{\TMAlg}{\ensuremath{\mathbb{T}\textnormal{-Alg}}\xspace}
\newcommand{\inv}{\ensuremath{{}^{\textrm{inv}}}\xspace}
\def\ox{\otimes}
\def\x{\times}
\definecolor{lightgray}{rgb}{0.666666,0.666666,0.666666}
\definecolor{darkgreen}{rgb}{0,0.45,0}
\begin{document}

\title{Braided skew monoidal categories}

\author{John Bourke}
\address{Department of Mathematics and Statistics, Faculty of Science, Masaryk University, Brno, Czech Republic}
\email{bourkej@math.muni.cz}
\address{Department of Mathematics, Macquarie University, NSW 2109, Australia}
\email{john.d.bourke@mq.edu.au}

\author{Stephen Lack}
\address{Department of Mathematics, Macquarie University NSW 2109, 
Australia}
\email{steve.lack@mq.edu.au}

\keywords{Braiding, skew monoidal category, bialgebra,
  quasitriangular, 2-category}

\begin{abstract}
We introduce the notion of a braiding on a skew monoidal category,
whose curious feature is that the defining isomorphisms involve three
objects rather than two.  Examples are shown to arise from 2-category
theory and from bialgebras.  In order to describe the 2-categorical
examples, we take a multicategorical approach.  We explain how certain braided skew monoidal structures
in the 2-categorical setting give rise to braided monoidal 
bicategories. For the 
bialgebraic examples, we show that, for a skew monoidal category arising from a
bialgebra, braidings on the skew monoidal category are in bijection
with cobraidings  (also known as coquasitriangular structures) on the
bialgebra. 
\end{abstract} 
\date\today
\maketitle

\section{Introduction}

A skew monoidal category is a category \cc equipped with a functor
$\cc^2\to \cc\colon (X,Y)\mapsto XY$,
an object $I \in\cc$, and natural transformations
\[ \xymatrix @R0pc { 
(XY)Z \ar[r]^{a} & X(YZ) \\
IX \ar[r]^{\ell} & X \\
X \ar[r]^{r} & XI
} \]
satisfying five coherence conditions \cite{Szlachanyi-skew}. When the
maps $a$, $\ell$, and $r$ are invertible, we recover the usual notion of monoidal
category. 

The generalisation allows for new examples.  For instance, if $B$ is a
bialgebra we obtain a new skew monoidal structure $\Vect[B]$ on 
the category $\Vect$ of vector spaces, with product $X
\star Y = X \otimes B \otimes Y$ and $I$ the ground field $K$. 
In this case the associativity map $a$ is defined using the
``fusion map'' of $B$, and is invertible just when the
bialgebra is Hopf; on the other hand the unit maps $\ell$ and $r$ are
never invertible unless $B=I$. 
More generally bialgebroids give rise to, and can by characterised by,
certain skew monoidal categories \cite{Szlachanyi-skew}.

Another class of examples \cite{bourko-skew}  arises if one attempts to study
2-categorical structures as strictly as possible.  For instance, there
is a skew monoidal structure on the 2-category $\FProds$ of
categories equipped a choice of finite products, and
functors which \emph{strictly} preserve them (not just in the usual up
to isomorphism sense). The tensor product $AB$ has the universal
property that maps $AB \to C$ correspond to functors $A \times B \to
C$ preserving products strictly in the first variable but up to
isomorphism in the second.  Although this example may seem
slightly bizarre, there is in fact good reason to study it. What
one really cares about is the 2-category $\FProd$ of categories with
finite products, finite-product-preserving functors (in the usual
up-to-isomorphism sense), and natural transformations.
But this is harder to work with --- for example, it 
has only bicategorical colimits.  In particular it has the structure of 
a monoidal closed bicategory, but the verification of this is technically rather
challenging.  The skew monoidal structure on $\FProds$ is much
easier to construct, and in fact --- as explained in Section 6.4.3 of
\cite{bourko-skew} --- contains the monoidal bicategory structure on $\FProd$ within it. 

Instead of the categories with finite products appearing in the previous
paragraph, one can do much
the same thing with structures such as symmetric monoidal categories,
or permutative categories, leading the way open to possible
applications to K-theory, using  \cite{Elmendorf2009Permutative}.

A natural question to ask is whether there exists a sensible notion of
\emph{braiding} for skew monoidal categories, generalising the
classical theory of braided monoidal categories
\cite{JoyalStreet-braided}.  A naive approach would be to
ask for an invertible natural transformation $$s\colon AB \to BA$$
interacting suitably with the skew monoidal structure.  However we
would like our notion of braiding to capture the example of $\FProds$
and, in that case, the objects $AB$ and $BA$ are not isomorphic.
Instead, what we find is that $(AB)C$ and $(AC)B$ both classify
functors preserving products strictly in $A$ and up to isomorphism in
$B$ and $C$, and so are isomorphic.

In the present paper we introduce a notion of braiding on a skew monoidal category which is given by an invertible natural transformation
$$s\colon (AB)C \to (AC)B$$
satisfying certain axioms.  Apart from capturing the above example and
others like it, the definition is justified in various ways.  For
example in Theorem~\ref{thm:bialgebras} we establish that braidings on
the skew monoidal category $\Vect[B]$ are in bijection
with \emph{cobraidings} (also known as coquasitriangular structures)
\cite{Kassel-book,Street-book}  on the bialgebra $B$.

Let us now give a brief outline of the paper.  In
Section~\ref{sect:monoidal} we define braidings and describe various
consequences of the axioms --- in particular, showing that if the
underlying skew monoidal structure notion is monoidal then our
definition restricts to the classical one.  In
Section~\ref{sect:closed} we introduce the notion, perhaps more
intuitive, of a braided skew closed category.  In this setting the
braiding is specified by an isomorphism $$[A,[B,C]] \to [B,[A,C]]$$
just as in the classical setting of symmetric closed categories.  Sections~\ref{sect:bialgebra} and~\ref{sect:2cat} are
driven by our two leading classes of examples.  Motivated by
bialgebras, in Section~\ref{sect:bialgebra} we study skew cowarpings
and monoidal comonads on monoidal categories.  The main result of
Section \ref{sect:bialgebra} is Theorem~\ref{thm:C^G-C[G]},
which asserts that, given a monoidal comonad $G$ on a monoidal category
$\cc$ satisfying a mild hypothesis, there is a bijection between
braidings on the monoidal category $\cc^{G}$ of coalgebras and
braidings on the  cowarped skew monoidal category $\cc[G]$.  This is
then specialised to the bialgebra setting  in
Theorem~\ref{thm:bialgebras}.  In Section~{\ref{sect:multicat}}
we introduce braided skew multicategories and show how to pass from
these, assuming a representability condition, to braided skew monoidal
categories. We use this in Section~\ref{sect:2cat} to exhibit  braidings on the 2-categorical examples such as $\FProds$.

\subsection*{Acknowledgements}
Both authors acknowledge with gratitude the support of an Australian Research Council
Discovery Grant DP160101519.

\section{Braided skew monoidal categories}
\label{sect:monoidal}

Let \cc be a skew monoidal category with structure maps $a\colon
(AB)C\to A(BC)$, $\ell\colon IA\to A$, and $r\colon A\to AI$.

\begin{remark}\label{rmk:duals} 
There is a variant of the notion of skew monoidal category in which
the directions of $a$, $\ell$, and $r$ are all reversed. We call this
a {\em right skew monoidal category}. (Our skew monoidal categories
are also called {\em left skew}.) If \cc is skew monoidal then
there are induced right skew monoidal structures on the opposite
category $\cc$, and also on $\cc$ with reverse multiplication; we call
the latter $\cc\rev$. On the other hand if we use the reverse
multiplication on $\cc\op$ we get another (left) skew monoidal
category, called $\cc\op\rev$.
\end{remark}

\begin{definition}\label{defn:braiding}
A {\em  braiding} on \cc consists of natural isomorphisms $s\colon (XA)B\to
(XB)A$ making the following diagrams commute:
\begin{equation}
  \label{eq:S2}
\xymatrix{
& ((XA)C)B \ar[r]^{s1} & ((XC)A)B \ar[dr]^{s} \\
((XA)B)C \ar[ur]^{s} \ar[dr]_{s1} &&& ((XC)B)A \\
& ((XB)A)C \ar[r]_{s} & ((XB)C)A \ar[ur]_{s1} }  
\end{equation}

\begin{equation}
  \label{eq:S3a}
\xymatrix{
  ((XA)B)C \ar[r]^{s1} \ar[d]_{a} & ((XB)A)C \ar[r]^{s} & ((XB)C)A
  \ar[d]^{a1} \\
(XA)(BC) \ar[rr]_{s} && (X(BC))A }
\end{equation}

\begin{equation}
  \label{eq:S3b}
  \xymatrix{
((XA)B)C \ar[r]^{s} \ar[d]_{a1} & ((XA)C)B \ar[r]^{s1} & ((XC)A)B
\ar[d]^{a} \\
(X(AB))C \ar[rr]_{s} && (XC)(AB) }
\end{equation}

\begin{equation}
  \label{eq:S*}
\xymatrix{
((XA)B)C \ar[r]^{a1} \ar[d]_{s} & (X(AB))C \ar[r]^{a} & X((AB)C) \ar[d]^{1s} \\
((XA)C)B \ar[r]_{a1} & (X(AC))B \ar[r]_{a} & X((AC)B)
}
\end{equation}
The braiding is a  {\em symmetry} if the diagram
\begin{equation}
\label{eq:S1}
\xymatrix{ 
& (XB)A \ar[dr]^{s} \\ 
(XA)B \ar[ur]^{s} \ar@{=}[rr] && (XA)B }  
\end{equation}
commutes, wherein the equality symbol represents the identity map.
\end{definition}

In later sections we shall study in detail the various examples of
braided skew monoidal categories described in the introduction;
here we content ourselves with giving a rather simple class of examples, including 
a symmetric skew monoidal structure on the category of pointed sets.

\begin{example}\label{ex:Msets}
  Let \cv be a monoidal category and $M=(M,m,i)$ a monoid in \cv; we write as
  if \cv were strict. Write $\cv^M$ for the category of left
  $M$-modules; these consist of an object $X\in\cv$ equipped with an
  associative and unital action $x\colon MX\to X$. The category
  $\cv^M$  has a skew monoidal structure with 
  product $(X,x)\ox (Y,y)=(XY,xY\colon MXY\to XY)$, unit $(M,m)$, and
  associativity inherited from $\cv$. The left unit map $(M,m)\ox
  (X,x)\to (X,x)$ is given by $x\colon MX\to X$ and the right unit map
  $(X,x)\to (M,m)\ox (X,x)$ by $Xi\colon X\to XM$. If $\cv$ has a
  braiding $c$, then $\cv^M$ becomes braided via the isomorphisms
  \[ \xymatrix @C1pc {
      (X,x)\ox(Y,y)\ox(Z,z) \ar@{=}[d] \ar[r]^{s} & (X,x)\ox(Y,y)\ox(Z,z)
      \ar@{=}[d] \\
      (XYZ,xYZ) \ar[r]^{1c} &     (XZY,xZY) } \]
  and this will be a symmetry if $c$ is one. 
\end{example}

\begin{example}\label{ex:pointed} 
In particular, we may take $\cv$ to be the category of sets,
with symmetric monoidal structure given by coproduct, and take $M$ to be  the terminal monoid.
Then $\cv^M$ is the category of pointed sets. The product of pointed sets $(X,x_0)$ and $(Y,y_0)$ is the
  disjoint union $X+Y$ with point $x_0\in X\subseteq X+Y$ and the unit
  object is the singleton pointed set.
\end{example}

 We shall see in the proof of Proposition~\ref{prop:monoidal}
below that \eqref{eq:S3a} and \eqref{eq:S3b} are analogues of the braid
equations and \eqref{eq:S2} of the Yang-Baxter equation, while
\eqref{eq:S*} like \eqref{eq:S2} is automatic in the classical setting
of braided monoidal categories. 

\begin{remark}\label{rmk:s-inverse}
Observe that \eqref{eq:S3b} for $s$ is precisely \eqref{eq:S3a} for
$s^{-1}$. On the other hand, \eqref{eq:S2} holds for $s^{-1}$ if and
only if it does so for $s$; and the same is true of
\eqref{eq:S*}. We write $\cc\inv$ for the skew monoidal category \cc
equipped with the natural isomorphism $s^{-1}$.
If $s$ is a symmetry, so that $s^{-1}=s$, then \eqref{eq:S3b} is equivalent to \eqref{eq:S3a}, and $\cc\inv=\cc$.
\end{remark}

There is no explicit compatibility requirement between the braiding and the left and right unit
maps, but see Propositions~\ref{prop:s.r} and~\ref{prop:slr} below.

\begin{proposition}\label{prop:monoidal}
  If \cc is a monoidal category, braidings and symmetries in the sense
  of Definition~\ref{defn:braiding} are in bijection with braidings and
  symmetries in the usual sense. 
\end{proposition}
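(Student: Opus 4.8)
The plan is to exhibit mutually inverse constructions between braidings $s\colon (XA)B \to (XB)A$ in the sense of Definition~\ref{defn:braiding} and classical braidings $c\colon AB \to BA$ on the monoidal category $\cc$. In one direction, given a classical braiding $c$, I would set
\[
s = \bigl((XA)B \xrightarrow{a} X(AB) \xrightarrow{1c} X(BA) \xrightarrow{a^{-1}} (XB)A\bigr),
\]
using invertibility of $a$; naturality of $s$ is immediate from naturality of $a$ and $c$. In the reverse direction, given $s$, I would recover $c$ by specialising $X=I$ and composing with the (now invertible) unit map $\ell$: explicitly $c = \bigl(AB \xrightarrow{\ell^{-1}1} (IA)B \xrightarrow{s} (IB)A \xrightarrow{\ell 1} BA\bigr)$, or perhaps more symmetrically by first using $a^{-1}$ and $\ell$ on both the source and target. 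One then checks these two passages are inverse to each other, which should be a short diagram chase using the unit coherence axioms of the monoidal category (essentially the triangle identity relating $a$, $\ell$, $r$).

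**Verifying the axioms correspond.**

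The substance is to show that under the formula $s = a^{-1}(1c)a$, the three axioms \eqref{eq:S2}, \eqref{eq:S3a}, \eqref{eq:S3b} are equivalent to the two classical hexagon axioms for $c$ (together with the observation that \eqref{eq:S*} becomes automatic). I would proceed by substituting the formula for $s$ into each of the pentagon/hexagon-shaped diagrams \eqref{eq:S2}--\eqref{eq:S*} and then using Mac Lane's pentagon and the naturality of $a$ to cancel all the associativity isomorphisms, leaving a diagram purely in the $c$'s. Concretely: \eqref{eq:S3a} should collapse to one of the two braiding hexagons (say $c_{A,BC} = (1c_{A,C})(c_{A,B}1)$ up to associativity), \eqref{eq:S3b} to the other hexagon (which matches Remark~\ref{rmk:s-inverse}, since \eqref{eq:S3b} for $s$ is \eqref{eq:S3a} for $s^{-1}$, and $s^{-1} = a^{-1}(1 c^{-1})a$ corresponds to the inverse braiding), and \eqref{eq:S2} should reduce to the Yang--Baxter equation for $c$ — which, as is classical, is itself a consequence of the two hexagons, so \eqref{eq:S2} is automatic on the monoidal side. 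Finally \eqref{eq:S*}, after substitution, becomes a diagram all of whose edges are built from $a$ alone (the $1s$ on the right unpacks to $1(a^{-1}(1c)a)$ and the $s$ on the left to $a^{-1}(1c)a$), so it follows from naturality of $a$ and the pentagon, with the $c$'s matching up trivially; hence it imposes no condition. For the symmetry clause, \eqref{eq:S1} becomes $a^{-1}(1 c_{B,A})(1 c_{A,B}) a = 1$, i.e. $c_{B,A} c_{A,B} = 1$, which is exactly the classical symmetry condition.

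**The main obstacle.**

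The genuinely delicate point — and the one I would budget the most care for — is bookkeeping the associativity isomorphisms in \eqref{eq:S2}, which is a pentagon of $s$'s: after substituting $s = a^{-1}(1c)a$ at each of the five edges, one gets a large diagram with many $a$'s that must be shown to cancel down to the Yang--Baxter hexagon, and getting the bracketings and the placement of the "1"s (i.e. which functor $X(-)$ or $(-)B$ is applied) exactly right is where sign/placement errors creep in. I expect to organise this by first proving a normal-form lemma — that $s$ composed with further associativities equals a single $1c$ conjugated by a canonical associativity isomorphism determined by source and target bracketings — and then applying that lemma uniformly. The other steps (the bijection being well-defined, the two constructions being mutually inverse, \eqref{eq:S*} and \eqref{eq:S1}) are routine coherence arguments. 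A reasonable alternative presentation, which may shorten the write-up, is to invoke Mac Lane's coherence theorem to reduce to the strict monoidal case, where $a=1$, $s=1c$, and the correspondence of axioms is visibly immediate; I would likely adopt this strictification shortcut for the bulk of the verification and only spell out the unit-map manipulations by hand.
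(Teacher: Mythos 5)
Your strategy is the one the paper itself follows: extract a classical braiding $c$ from $s$ by setting $X=I$ and conjugating by $\ell1$, show that $s$ is forced to be $a^{-1}\circ(1c)\circ a$, and then match \eqref{eq:S3a} and \eqref{eq:S3b} with the two hexagons, \eqref{eq:S2} with the Yang--Baxter equation (automatic by Joyal--Street), \eqref{eq:S*} with a diagram that commutes by naturality of $a$ alone, and \eqref{eq:S1} with $c_{B,A}c_{A,B}=1$. That part of the identification is exactly right, and your strictification shortcut is a legitimate way to handle the associativity bookkeeping in \eqref{eq:S2} and \eqref{eq:S*}.

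There is, however, one genuine gap, and it sits at the crux of the bijection. You claim that the round trip $s\mapsto c\mapsto a^{-1}(1c)a$ returns $s$ ``by a short diagram chase using the unit coherence axioms of the monoidal category.'' This cannot work: a natural isomorphism $s\colon (XA)B\to(XB)A$ is extra structure, not a canonical morphism of the free monoidal category, and its components at general $X$ are not determined by those at $X=I$ through naturality (there is no map $I\to X$ to transport along) or through Mac Lane coherence (which says nothing about $s$). One of the braiding axioms must be invoked here. The paper's proof does exactly this: it applies \eqref{eq:S*} with $A=I$, precomposes with $(r1)1\colon (XB)C\to((XI)B)C$ (using naturality of $s$ in its first variable with respect to $r\colon X\to XI$), and then collapses both rows using the triangle identity $(1\ell)\circ a\circ(r1)=1$ and naturality of $a$, arriving at $1c\circ a=a\circ s$, i.e.\ $s=a^{-1}\circ(1c)\circ a$. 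Without this step the passage from skew braidings to classical braidings is not known to be injective, so no bijection is established. (By contrast, the other composite $c\mapsto s\mapsto c$ really is just the triangle identity, as you say.) Once you insert this use of \eqref{eq:S*}, the rest of your plan goes through.
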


\proof
There is a unique natural isomorphism $c\colon BC\to CB$ making the
diagram
\[ \xymatrix{
(IB)C \ar[r]^{\ell1} \ar[d]_{s} & BC \ar[d]^{c} \\
(IC)B \ar[r]_{\ell1} & CB } \]
commute; now use \eqref{eq:S*} with $A=I$ 
\[ \xymatrix{
&&& (XB)C \ar[dr]^{a} \\ 
(XB)C \ar[r]_{(r1)1} \ar[d]_{s} \ar@/^1pc/[urrr]^{1} & ((XI)B)C \ar[r]_{a1} \ar[d]_{s}
& (X(IB))C \ar[r]_{a} \ar[ur]^{(1\ell)1} & X((IB)C)
\ar[d]^{1s} \ar[r]_{1(\ell1)} & X(BC) \ar[d]^{1c} \\
(XC)B \ar[r]^{(r1)1} \ar@/_1pc/[drrr]_{1} & ((XI)C)B \ar[r]^{a1} &
(X(IC))B \ar[r]^{a} \ar[dr]_{(1\ell)1} & X((IC)B)
\ar[r]^{1(\ell1)} & X(CB) \\
&&& (XC)B \ar[ur]_{a} 
} \]
to deduce that $s$ necessarily has the
form 
\[ \xymatrix{
(XB)C \ar[r]^{a} & X(BC) \ar[r]^{1c} & X(CB) \ar[r]^{a^{-1}} & (XC)B.
} \]
Then \eqref{eq:S3a} and
\eqref{eq:S3b} are equivalent to the usual two axioms
\cite{JoyalStreet-braided} for a braiding,
and \eqref{eq:S2} is a consequence  by
\cite[Proposition~1.2]{JoyalStreet-braided}. \eqref{eq:S*} is automatic by
\[ \xymatrix{
  ((XA)B)C \ar[r]^{a1} \ar[d]^{a} \ar@/_3pc/[ddd]_{s} & (X(AB))C \ar[r]^{a} & X((AB)C) \ar[d]_{1a} \ar@/^3pc/[ddd]^{1s} \\
  (XA)(BC) \ar[rr]^{a} \ar[d]^{1c} && X(A(BC)) \ar[d]_{1(1c)} \\
  (XA)(CB) \ar[rr]^{a} \ar[d]^{a^{-1}} && X(A(CB)) \ar[d]_{1a^{-1}} \\
  ((XA)C)B \ar[r]_{a1} & (X(AC))B \ar[r]_{a} & X((AC)B) } \]
while finally \eqref{eq:S1} is clearly 
equivalent to the usual symmetry axiom for $c$. 
\endproof

Thus if $a$, $\ell$, and $r$ are all invertible, then we recover the
usual notion of braided or symmetric monoidal category. But in fact it
is enough just to suppose that $\ell$ is invertible: see
Proposition~\ref{prop:left-normal} below.

\subsection*{Consequences of the axioms}

Let $s$ be a braiding on the skew monoidal category \cc. 

\begin{lemma}\label{lemma:s.r}
If~\eqref{eq:S3a} holds then the composite
\[ \xymatrix @R0pc {
(WB)A \ar[r]^{r} & ((WB)A)I \ar[r]^{s} & ((WB)I)A \ar[r]^{a1} &
(W(BI))A } \]
is equal to $(1r)1$.
\end{lemma}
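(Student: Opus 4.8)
The plan is to obtain the claimed identity directly from the single instance of \eqref{eq:S3a} in which $C$ is set equal to $I$. Writing that instance out (and renaming $X$ to $W$), it asserts that the two composites
\[
((WA)B)I \xrightarrow{s1} ((WB)A)I \xrightarrow{s} ((WB)I)A \xrightarrow{a1} (W(BI))A
\]
and
\[
((WA)B)I \xrightarrow{a} (WA)(BI) \xrightarrow{s} (W(BI))A
\]
coincide. Since both originate at $((WA)B)I$, I would precompose this equation with the morphism $(WB)A \xrightarrow{s^{-1}} (WA)B \xrightarrow{r} ((WA)B)I$, which by naturality of $r$ is equally the composite $(WB)A \xrightarrow{r} ((WB)A)I \xrightarrow{s^{-1}1} ((WA)B)I$.

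On the first composite this precomposition makes the adjacent factors $s^{-1}1$ and $s1$ cancel, so what remains is exactly $(WB)A \xrightarrow{r} ((WB)A)I \xrightarrow{s} ((WB)I)A \xrightarrow{a1} (W(BI))A$ — that is, the composite in the statement. Hence the task reduces to showing that the \emph{same} precomposition turns the second composite into $(1r)1$. That precomposite is $s\circ a\circ r\circ s^{-1}$, and its inner part $(WA)B \xrightarrow{r} ((WA)B)I \xrightarrow{a} (WA)(BI)$ is precisely the left-hand side of one of the unit axioms for a skew monoidal category, which identifies it with $1r\colon (WA)B\to(WA)(BI)$. So the precomposite becomes $s\circ(1r)\circ s^{-1}$; applying naturality of $s$ in its last variable to the morphism $r\colon B\to BI$ rewrites $s\circ(1r)$ as $((1r)1)\circ s$, and the residual $s$ and $s^{-1}$ cancel, leaving $(1r)1$.

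The argument is thus entirely formal — conceptually it just reflects that $r$ alters only the object on which $s$ acts naturally — so I do not anticipate a genuine obstacle. The one point demanding care is bookkeeping: keeping straight which instance of $s$ (at which triple of objects) and which associator occurs at each stage, and checking that the abbreviated symbols $s1$, $a1$, $1r$, $(1r)1$ really name the intended morphisms. Once the $C=I$ instance of \eqref{eq:S3a} is displayed explicitly, writing out the remaining manipulations is routine.
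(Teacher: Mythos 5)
Your proof is correct and is essentially the paper's own argument: the paper assembles the same ingredients (the $C=I$ instance of \eqref{eq:S3a}, naturality of $r$ along $s$, the unit axiom $a\circ r=1r$, naturality of $s$ in its last variable, and invertibility of $s$) into one commutative diagram and then cancels $s$, whereas you precompose with $s^{-1}$ from the start. The difference is purely presentational.
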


\proof
This holds by commutativity of 
\[ \xymatrix{
& (WB)A \ar[dr]^{r} \\
(WA)B \ar[r]_{r} \ar[ur]^{s} \ar[dr]_{1r} \ar[d]_{s} & ((WA)B)I \ar[r]_{s1}
\ar[d]^{a} &
((WB)A)I \ar[r]_{s} & ((WB)I)A \ar[d]^{a1} \\
(WB)A \ar@/_2pc/[rrr]_{(1r)1} & (WA)(BI) \ar[rr]^{s} && (W(BI))A } \]
and invertibility of $s$. \endproof

\begin{proposition}\label{prop:s.r}
  If \eqref{eq:S3a} holds then the diagram 
\[ \xymatrix{
WA \ar[r]^{r} \ar[dr]_{r1} & (WA)I \ar[d]^{s} \\
& (WI)A } \]
commutes.
\end{proposition}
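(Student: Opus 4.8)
The plan is to compare the two ways of building the map $WA \to (WI)A$ using $r$ and $s$, and exploit Lemma~\ref{lemma:s.r} together with the naturality of $r$ and the naturality of $s$. First I would apply Lemma~\ref{lemma:s.r} in the special case $B = I$: this gives that the composite
\[ \xymatrix @R0pc {
(WI)A \ar[r]^-{r} & ((WI)A)I \ar[r]^-{s} & ((WI)I)A \ar[r]^-{a1} & (W(II))A } \]
equals $(1r)1$. This already has the right shape, since $(WI)A$ is the target of the map we want to understand.

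Next I would set up a diagram with source $WA$ whose outer boundary expresses the claimed equation $r \cdot s = r1$ (as maps $WA \to (WI)A$), and try to fill it using: (i) naturality of $r\colon \mathrm{id} \Rightarrow (-)I$ applied to the morphism $r\colon WA \to (WI)A$ itself, giving a commuting square relating $r_{WA}$, $r_{(WI)A}$, $r1\colon (WA)I \to ((WI)A)I$, and; (ii) naturality of the braiding $s$ with respect to the unit map $r\colon W \to WI$ in the first slot, i.e. the square comparing $s_{W,A,I}$ and $s_{WI,A,I}$ along $(r1)1$; (iii) the instance of Lemma~\ref{lemma:s.r} above. The morphism $r\colon WA \to (WI)A$ that we are probing is not a unit component in the obvious variable, so the key move is to factor it — and the natural candidate, in analogy with the calculation in Proposition~\ref{prop:monoidal} where $s$ was shown to be built from $a$, $\ell$, $r$, is to insert $(r1)1$ and $a1$ so as to land in the situation governed by \eqref{eq:S3a} directly, rather than only through its corollary.

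Concretely, I expect the cleanest route is: start from $WA$, apply $r$ to get $(WA)I$, then run the \eqref{eq:S3a}-style composite $((WA)I \to ((WA)I)I$? no) — more precisely, use the already-proven Lemma~\ref{lemma:s.r} with $W \rightsquigarrow W$, $B \rightsquigarrow I$, $A \rightsquigarrow A$, together with the monomorphism-type cancellation coming from invertibility of $s$ and of the relevant unit maps, exactly as Lemma~\ref{lemma:s.r} itself was deduced. So the diagram I would draw is the $B=I$ analogue of the pentagon-like diagram in the proof of Lemma~\ref{lemma:s.r}, with the node $(WA)(BI)$ becoming $(WA)(II)$ and the outer arrows $(1r)1$ and the two copies of $r$ conspiring; invertibility of $s$ then strips off the ``$(-)I$'' that Lemma~\ref{lemma:s.r} carries, leaving precisely the triangle asserting $r\cdot s = r1$.

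The main obstacle I anticipate is bookkeeping rather than conceptual: one must be careful that the instance of \eqref{eq:S3a} being used really is the one with the third variable equal to $I$, and that the naturality squares invoked for $r$ and $s$ are with respect to the correct morphism (the unit $r\colon W\to WI$ versus $r\colon WA \to (WA)I$ versus $r\colon (WI)A\to((WI)A)I$) — mixing these up is easy. A secondary subtlety is justifying the cancellation step: we are not cancelling a raw monomorphism but rather using that $s$ is invertible and that the composite we cancel against is itself of the form (something)$\circ s$, so the cancellation is legitimate; I would phrase this exactly as in the proof of Lemma~\ref{lemma:s.r} (``by commutativity of [diagram] and invertibility of $s$''). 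No appeal to $\ell$ or to the other axioms \eqref{eq:S2}, \eqref{eq:S3b}, \eqref{eq:S*} should be needed — only \eqref{eq:S3a} via Lemma~\ref{lemma:s.r}, plus naturality.
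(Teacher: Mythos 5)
Your ingredients (i)--(iii) are indeed the three nontrivial regions of the paper's own diagram: naturality of $r$ along $r1\colon WA\to (WI)A$, naturality of $s$ in its first variable along $r\colon W\to WI$, and Lemma~\ref{lemma:s.r} with $B=I$. Assembling them yields an equation of maps $WA\to (W(II))A$, namely $(a1)\circ((r1)1)\circ s\circ r=(1r)1\circ (r1)$, i.e.\ the two sides of the desired triangle postcomposed with $(a1)\circ((r1)1)$ and $(1r)1$ respectively. The gap is in how you propose to remove these postcompositions. Invertibility of $s$ is of no use here: the maps to be cancelled are built from $r$ and $a$, and in a skew monoidal category neither $r$ nor $a$ is invertible, so there is no ``monomorphism-type cancellation'' to be had from them. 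The paper finishes by postcomposing with $(1\ell)1\colon (W(II))A\to (WI)A$ and invoking two of the skew monoidal unit axioms, $(1\ell)\circ a\circ(r1)=1$ and $\ell_I\circ r_I=1_I$, which exhibit $(a1)\circ((r1)1)$ and $(1r)1$ as split monomorphisms with the \emph{common} retraction $(1\ell)1$. So your explicit claim that ``no appeal to $\ell$ should be needed'' is precisely where the argument breaks down: $\ell$ is what legitimises the cancellation.

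Your fallback route in the final paragraph does not repair this. Rerunning the proof of Lemma~\ref{lemma:s.r} with $B=I$ only reproves that lemma in the case $B=I$, which is an equation of maps out of $(WI)A$ into $(W(II))A$; the object $WA$ and the component $r\colon WA\to (WA)I$ never appear in that diagram, so no amount of stripping off isomorphisms can extract the triangle $s\circ r=r1\colon WA\to (WI)A$ from it alone. The correct completion is the one sketched above: insert the identity of $(WI)A$ in its factored form $(1\ell)1\circ a1\circ (r1)1$ after $s\circ r$, push $s\circ r$ through the two naturality squares, apply the lemma, and collapse $(1\ell)1\circ(1r)1$ to the identity.
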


\proof
Use the previous lemma in
\[ \xymatrix{
WA \ar[r]^r \ar[d]_{r1} & (WA)I \ar[r]^{s} \ar[d]_{(r1)1} & (WI)A
\ar[d]_{(r1)1}  \ar@/^3pc/[ddd]^{1} \\
(WI)A \ar[r]^{r} \ar[drr]^{(1r)1} \ar[ddrr]_{1} & ((WI)A)I \ar[r]^{s} & ((WI)I)A
\ar[d]_{a1} \\
&& (W(II))A \ar[d]_{(1\ell)1} \\
&& (WI)A.  } \]
\endproof

This easily implies:

\begin{proposition}\label{prop:slr}
  If \eqref{eq:S3a} holds then the diagram 
\begin{equation}
  \label{eq:S4}
\xymatrix{
(XA)I \ar[r]^{s} & (XI)A \ar[r]^{a} & X(IA) \ar[d]^{1\ell} \\
XA \ar[u]^{r} \ar@{=}[rr] && XA }
\end{equation}
commutes.
\end{proposition}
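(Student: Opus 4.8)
The plan is to read \eqref{eq:S4} as the statement that the composite
\[ XA \xrightarrow{r} (XA)I \xrightarrow{s} (XI)A \xrightarrow{a} X(IA) \xrightarrow{1\ell} XA \]
is the identity on $XA$, and then to collapse its first two arrows by means of the previous proposition. Specialising Proposition~\ref{prop:s.r} at $W = X$ gives $s\circ r = r1\colon XA\to (XI)A$, so the composite above simplifies to
\[ XA \xrightarrow{r1} (XI)A \xrightarrow{a} X(IA) \xrightarrow{1\ell} XA. \]

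The next step is simply to observe that this last composite being the identity is one of the five coherence conditions defining a skew monoidal category \cite{Szlachanyi-skew} — the one relating $a$ to $r$ on the left and $\ell$ on the right — instantiated at the two objects $X$ and $A$. Hence nothing further is needed. Diagrammatically, \eqref{eq:S4} is the pasting of the triangle of Proposition~\ref{prop:s.r}, whose hypotenuse is $r1$, against this skew monoidal unit triangle along their common edge $r1$.

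There is essentially no obstacle here, which is exactly why the text says ``This easily implies''. The only point requiring a moment's attention is the instantiation of variables in Proposition~\ref{prop:s.r}: one must check that its ``$W$'' and ``$A$'' are taken to be our ``$X$'' and ``$A$'', so that the arrows $r\colon XA\to (XA)I$ and $s\colon (XA)I\to (XI)A$ occurring there are precisely those appearing in \eqref{eq:S4}. Any use of invertibility of $s$ has already been discharged inside Proposition~\ref{prop:s.r} (through Lemma~\ref{lemma:s.r}), so no such argument reappears; the proof is purely a matter of substituting one known commuting triangle into another and then quoting a skew monoidal axiom.
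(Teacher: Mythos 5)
Your proof is correct and is exactly the argument the paper intends by ``This easily implies'': apply Proposition~\ref{prop:s.r} to rewrite $s\circ r$ as $r1$, and then invoke the skew monoidal axiom $(1\ell)\circ a\circ(r1)=1_{XA}$.
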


\begin{proposition}\label{prop:s.a.r1}
  If \eqref{eq:S3a} holds then the diagram 
\[ \xymatrix{
(XA)B \ar[r]^{r1} \ar[d]_{s} & ((XA)I)B \ar[r]^{a} & (XA)(IB) \ar[d]^{s} \\
(XB)A \ar[r]_{(r1)1} & ((XI)B)A \ar[r]_{a1} & (X(IB))A } \]
commutes.
\end{proposition}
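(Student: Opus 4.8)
The plan is to deduce this from a single instance of \eqref{eq:S3a}, namely the one obtained by substituting $B\mapsto I$ and $C\mapsto B$. That instance is the commuting diagram
\[ \xymatrix{
((XA)I)B \ar[r]^{s1} \ar[d]_{a} & ((XI)A)B \ar[r]^{s} & ((XI)B)A \ar[d]^{a1} \\
(XA)(IB) \ar[rr]_{s} && (X(IB))A, } \]
so that $s\cdot a=(a1)\cdot s\cdot(s1)$ as morphisms $((XA)I)B\to(X(IB))A$. Precomposing both sides with $r1\colon(XA)B\to((XA)I)B$ turns the left-hand side into exactly the upper path $s\cdot a\cdot(r1)$ of the square we wish to prove, so it only remains to identify the right-hand composite $(a1)\cdot s\cdot(s1)\cdot(r1)$ with the lower path $(a1)\cdot\bigl((r1)1\bigr)\cdot s$.

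To do this I would first rewrite $(s1)\cdot(r1)$: by functoriality of $(-)B$ it equals $(s\cdot r)1$, where $s\cdot r$ is the composite $XA\to(XA)I\to(XI)A$, and by Proposition~\ref{prop:s.r} this composite is simply $r1\colon XA\to(XI)A$. Hence $(s1)\cdot(r1)$ is a morphism of the form $\bigl((r1)1\bigr)$ built from $r\colon X\to XI$. One then slides this across $s$ using naturality of $s$ in its first variable at $r\colon X\to XI$, which is precisely the assertion that
\[ \xymatrix{
(XA)B \ar[r]^{s} \ar[d]_{(r1)1} & (XB)A \ar[d]^{(r1)1} \\
((XI)A)B \ar[r]_{s} & ((XI)B)A } \]
commutes. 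Chaining these identities rewrites $(a1)\cdot s\cdot(s1)\cdot(r1)$ as $(a1)\cdot\bigl((r1)1\bigr)\cdot s$, which is the lower path, completing the argument.

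I do not expect a genuine obstacle here: once the substitution $B\mapsto I$, $C\mapsto B$ in \eqref{eq:S3a} is fixed, every subsequent step is forced. The only real care needed is bookkeeping, since the symbols $r1$ and $(r1)1$ denote several different morphisms depending on which object the unit map acts on and on the bracketing, and one must check that the instance of Proposition~\ref{prop:s.r} and the naturality of $s$ are applied to the correct tensor factors.
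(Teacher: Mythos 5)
Your argument is correct and is essentially the paper's own proof: the paper pastes together exactly the same three ingredients --- the instance of \eqref{eq:S3a} with $B\mapsto I$, $C\mapsto B$, the identity $(s1)(r1)=(r1)1$ coming from Proposition~\ref{prop:s.r}, and naturality of $s$ in its first variable at $r\colon X\to XI$ --- presented as a single commuting diagram rather than as your equational chain. No further justification is needed.
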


\proof
\[ \xymatrix{
(XA)B \ar[r]^{r1} \ar[dr]_{(r1)1} \ar[dd]_{s} & ((XA)I)B \ar[r]^{a}
\ar[d]^{s1} &
(XA)(IB) \ar[dd]^{s} \\
& ((XI)A)B \ar[d]^{s}  \\
(XB)A \ar[r]_{(r1)1} & ((XI)B)A \ar[r]_{a1} & (X(IB))A } \]
\endproof

Dually, we have

\begin{proposition}\label{prop:s.r1} 
 If~\eqref{eq:S3b} holds then the diagrams 
\[ \xymatrix{
WA \ar[r]^{r1} \ar[dr]_{r} & (WI)A \ar[d]^{s} \\ & (WA)I } \]
\[ \xymatrix{
(XA)B \ar[r]^{(r1)1} \ar[d]_{s} & ((XI)A)B \ar[r]^{a1} & (X(IA))B \ar[d]^{s} \\
(XB)A \ar[r]_{r1} & ((XB)I)A \ar[r]_{a} & (XB)(IA) } \]
 commute. 
\end{proposition}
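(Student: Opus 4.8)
The plan is to derive both diagrams from the results already proved for $s$, by transporting them to the skew monoidal category $\cc\inv$, which has the same associativity and unit constraints as \cc but carries the braiding $s^{-1}$. The crucial observation is the one recorded in Remark~\ref{rmk:s-inverse}: axiom~\eqref{eq:S3a} for $s^{-1}$ is literally axiom~\eqref{eq:S3b} for $s$, which is the hypothesis of the proposition. Since the proofs of Propositions~\ref{prop:s.r} and~\ref{prop:s.a.r1} use only~\eqref{eq:S3a} together with invertibility of the braiding, both propositions apply verbatim to $\cc\inv$.

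For the triangle, Proposition~\ref{prop:s.r} applied to $\cc\inv$ asserts that $s^{-1}\circ r$ and $r1$ agree as maps $WA\to(WI)A$; post-composing with $s$ and cancelling $s\circ s^{-1}=1$ gives $r=s\circ(r1)$, which is exactly the first diagram. For the hexagon, Proposition~\ref{prop:s.a.r1} applied to $\cc\inv$ gives the commuting diagram obtained from the one displayed there by replacing every occurrence of $s$ with $s^{-1}$; interchanging the two object names $A$ and $B$, and then pre- and post-composing with the appropriate instances of $s$ (again using $s\circ s^{-1}=1$), turns this identity into the second diagram.

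The only real point of care is bookkeeping: one must track which instance of the natural family $s$ --- that is, which objects occupy the three slots of $(X-)-$ --- is intended at each step when $s$ is traded for $s^{-1}$, and this is precisely where the interchange $A\leftrightarrow B$ is forced. I expect this matching of instances, rather than any genuine diagram chase, to be the (minor) obstacle; once it is settled the rearrangements are purely formal, using nothing beyond invertibility of $s$.
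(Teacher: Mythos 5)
Your proposal is correct and is precisely the paper's argument: the paper's entire proof reads ``Apply Propositions~\ref{prop:s.r} and~\ref{prop:s.a.r1} to $\cc\inv$.'' You have simply spelled out the bookkeeping (the $A\leftrightarrow B$ interchange and the cancellation of $s\circ s^{-1}$) that the paper leaves implicit, and your accounting of which instances of $s$ are involved checks out.
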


\proof
Apply  Propositions~\ref{prop:s.r} and~\ref{prop:s.a.r1} to $\cc\inv$.  
\endproof

Recall that a skew monoidal category is {\em left normal} when the
left unit maps $\ell\colon IA\to A$ are invertible.

\begin{proposition}\label{prop:left-normal}
If \cc is a braided skew monoidal category which is left normal, then
 \cc is in fact monoidal.
\end{proposition}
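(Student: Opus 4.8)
The plan is to deduce from the hypotheses that both $a$ and $r$ are invertible; since a skew monoidal category in which $a$, $\ell$ and $r$ are all invertible is monoidal, this suffices.

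I would first treat $a$. Left normality makes each $a_{I,B,C}$ invertible: the skew monoidal unit axiom $\ell_{BC}\cdot a_{I,B,C}=\ell_B\otimes 1_C$ gives $a_{I,B,C}=\ell_{BC}^{-1}\cdot(\ell_B\otimes 1_C)$, a composite of invertible maps. Now take the instance of \eqref{eq:S3a} in which the object $X$ is $I$ and the object $A$ is an arbitrary object $Y$; rearranging it expresses $a_{(IY),B,C}$ as a composite of four maps, each of which is an instance of $s$, of $s^{-1}$, of $s\otimes 1$, or is $a_{I,B,C}\otimes 1$. Since $s$ is invertible and $a_{I,B,C}$ is invertible, every factor is invertible, and hence so is $a_{(IY),B,C}$. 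Because $\ell_Y$ is invertible, naturality of $a$ in its first variable then shows that $a_{Y,B,C}$ is invertible. As $Y$, $B$, $C$ were arbitrary, $a$ is invertible.

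For $r$: the skew monoidal unit axiom $\ell_I\cdot r_I=1_I$ together with left normality gives $r_I=\ell_I^{-1}$, so $r_I$ is invertible. Applying Proposition~\ref{prop:s.r} with $W=I$ shows $s\cdot r_{IA}=r_I\otimes 1_A$ as maps $IA\to(II)A$. The right-hand side and $s$ are invertible, so $r_{IA}$ is invertible; and since $\ell_A$ is invertible, naturality of $r$ shows $r_A$ is invertible. Thus $a$, $\ell$ and $r$ are all invertible and $\cc$ is monoidal.

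The step that requires an idea is the treatment of $a$: left normality by itself does not force $a$ to be invertible, so the braiding must enter essentially, and the trick is to observe that \eqref{eq:S3a} transports invertibility of $a$ from the case $X=I$ --- where it is automatic --- to a general first argument, with Proposition~\ref{prop:s.r} playing the analogous role for $r$. Once this is seen, the only remaining work is to check that the displayed composites consist of invertible maps and to carry out the two naturality reductions, both routine.
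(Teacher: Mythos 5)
Your proof is correct, and its overall strategy---bootstrap invertibility of $a$ and $r$ from the unit object using the braiding axioms and then transfer along the invertible $\ell$ by naturality---is the paper's. The treatment of $a$ is essentially identical: both arguments instantiate \eqref{eq:S3a} at $X=I$; the only cosmetic difference is that you extract invertibility of $a_{I,B,C}$ directly from the unit axiom $\ell_{BC}\circ a_{I,B,C}=\ell_B\otimes 1$, where the paper composes the two legs of the square with $(\ell1)1$ and $\ell1$ respectively (which amounts to the same axiom). For $r$ you take a mildly different and slightly more economical route: the paper first constructs the isomorphism $c\colon AB\to BA$ from $s$ and uses Proposition~\ref{prop:slr} to exhibit $\ell\circ c\circ r_B$ as the identity, whence $r_B=(\ell c)^{-1}$; you instead observe $r_I=\ell_I^{-1}$ from the axiom $\ell_I\circ r_I=1_I$, feed this into Proposition~\ref{prop:s.r} at $W=I$ to make $r_{IA}$ invertible, and finish by naturality of $r$ along $\ell_A$. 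Since Proposition~\ref{prop:slr} is itself a consequence of Proposition~\ref{prop:s.r}, the two routes rest on the same compatibility of $s$ with $r$; yours avoids introducing $c$ altogether, while the paper's version has the side benefit of making explicit the binary braiding $c$ on the resulting monoidal category, which is the datum that Proposition~\ref{prop:monoidal} identifies with the classical notion.
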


\proof
As in Proposition~\ref{prop:monoidal}, there is a unique natural isomorphism $c\colon AB\to
BA$ making the diagram
\[ \xymatrix{
(IA)B \ar[r]^{s} \ar[d]_{\ell1} & (IB)A \ar[d]^{\ell1} \\ AB
\ar[r]_{c} & BA } \]
commute, and then by Proposition~\ref{prop:slr}, the diagram
\[ \xymatrix{
IB \ar@/^2pc/[rrrr]^1 
\ar[r]_{r} \ar[d]_{\ell} & (IB)I \ar[r]_{s} \ar[d]^{\ell1} & (II)B
\ar[r]_a \ar[d]_{\ell1} & I(IB) \ar[dl]^{\ell} \ar[r]_{1\ell} & IB \ar[d]^{\ell}  \\
B \ar[r]_{r} & BI \ar[r]_{c} & IB \ar[rr]_{\ell} & & B  } \] 
commutes. Since $c\colon BI\to IB$ and the various instances of $\ell$
are invertible, it follows that $r\colon B\to BI$ is also invertible;
thus the skew monoidal category \cc is also right normal.

By \eqref{eq:S3a} and one of the skew monoidal category axioms, the
diagram
\[ \xymatrix{
((IA)B)C \ar[r]^{s1} \ar[d]_{a} & ((IB)A)C \ar[r]^{s} & ((IB)C)A
\ar[d]_{a1} \ar[dr]^{(\ell1)1} \\
(IA)(BC) \ar[rr]_{s} & & (I(BC))A \ar[r]_{\ell1} & (BC)A } \]
commutes, and so the left vertical is invertible. But now by
naturality the diagram 
\[ \xymatrix{
((IA)B)C \ar[r]^{(\ell1)1} \ar[d]_{a} & (AB)C \ar[d]^{a} \\
(IA)(BC) \ar[r]_{\ell1} & A(BC) } \]
commutes, and so the right vertical is invertible. This proves that
the skew monoidal category \cc is actually monoidal.
\endproof

 We close this section with something which is {\em not} a
consequence of the axioms. In a braided skew monoidal category the
diagram
\[ \xymatrix @C1pc {
    ((XI)A)B \ar[rr]^{s1} \ar[d]_{a1} && ((XA)I)B \ar[d]^{a} \\
    (X(IA))B \ar[dr]_{(1\ell)1} && (XA)(IB) \ar[dl]^{1\ell} \\
    & (XA)B } \]
need not commute. 
In particular, it does not commute in the braided skew monoidal category of pointed sets
described in Example~\ref{ex:pointed}.  Note,
however, that composing either composite with $(r1)1\colon (XA)B\to
((XI)A)B$ gives the identity.

\section{Braided skew closed categories}
\label{sect:closed}

Let \cc be a skew closed category in the sense of \cite{Street-SkewClosed} with structure maps 
\[ \xymatrix @R0pc { 
[B,C] \ar[r]^-{L} & [[A,B],[A,C]] \\
[I,A] \ar[r]^{i} & A \\
I \ar[r]^{j} & [A,A]. } \]

\begin{definition}\label{defn:braiding-closed}
A {\em  braiding} on \cc consists of natural
isomorphisms $$s^{\prime}\colon [B,[A,Y]] \to [A,[B,Y]]$$ making the
following diagrams commute.  

\begin{equation}\label{eq:bourkeS2}
 \xymatrix{
[C,[B,[A,Y]]] \ar[r]^{s'} \ar[d]_{[1,s']} & [B,[C,[A,Y]]]
\ar[r]^{[1,s']} & [B,[A,[C,Y]]] \ar[d]^{s'} \\
[C,[A,[B,Y]]] \ar[r]_{s'} & [A,[C,[B,Y]]] \ar[r]_{[1,s']} &
[A,[B,[C,Y]]] } 
\end{equation}

\begin{equation}
\label{eq:bourkeS3}
  \xymatrix{
[B,[A,Y]] \ar[rr]^{s'} \ar[d]_{L} && [A,[B,Y]] \ar[d]^{[1,L]} \\
[[C,B],[C,[A,Y]]] \ar[r]_{[1,s']} & [[C,B],[A,[C,Y]]] \ar[r]_{s'} &
[A,[[C,B],[C,Y]]] 
}
\end{equation}
\begin{equation}
\label{eq:bourkeS3b}
  \xymatrix{
[B,[A,Y]] \ar[rr]^{s'} \ar[d]_{[1,L]} && [A,[B,Y]] \ar[d]^{L} \\
[B,[[C,A],[C,Y]]]  \ar[r]_{s'} & [[C,A],[B,[C,Y]]] \ar[r]_{[1,s']} &
[[C,A],[C,[B,Y]]]
}
\end{equation}

\begin{equation}\label{eq:bourkeS*} \xymatrix{
    [X,Y] \ar[r]^-{L} \ar[d]_{L} & [[C,X],[C,Y]] \ar[r]^-{L} & [[B,[C,X]],[B,[C,Y]]] \ar[d]^{[1,s']} \\
    [[B,X],[B,Y]] \ar[r]_-{L} & [[C,[B,X]],[C,[B,Y]]] \ar[r]_-{[s',1]} & [[B,[C,X]],[C,[B,Y]]] }
  \end{equation}
  
The braiding is a  {\em symmetry} if the diagram 
\begin{equation}\label{eq:bourkeS1}
 \xymatrix{ [B,[A,Y]] \ar[r]^{s'} \ar[dr]_1 & [A,[B,Y]] \ar[d]^{s'} \\
&    [B,[A,Y]] } \end{equation}
commutes.

\end{definition}
\begin{remark}   
Condition \eqref{eq:bourkeS3b} holds for $s'$ just when
\eqref{eq:bourkeS3} does for the inverse of $s'$; thus in the
symmetric case \eqref{eq:bourkeS3b} is not needed.  
A definition of symmetric skew closed category was given in
\cite{bourko-skew} --- a skew closed category equipped with a natural
isomorphism $s^{\prime}$ satisfying \eqref{eq:bourkeS2},
\eqref{eq:bourkeS3}, \eqref{eq:bourkeS1}, and an axiom concerning the
unit $I$.  We prove in Corollary~\ref{cor:red} that the unit axiom is
redundant, and so our definition implies that of
\cite{bourko-skew}.  In fact the implication is strict: we
shall see in Remark~\ref{rmk:quasicommutative} that \eqref{eq:bourkeS*} does not follow from the
other axioms.
\end{remark}

\begin{remark}
  Just as in the classical case, a skew-closed category \cc admits an enrichment over itself, and the representable functors $[B,-]$ are \cc-enriched. The condition \eqref{eq:bourkeS*}  states that the isomorphisms
  \[ s'\colon [B,[C,-]]\to [C,[B,-]]\]
  are not just natural, but \cc-natural. 
\end{remark}

Suppose that \cc is a  skew monoidal category which is
\emph{closed}, by which we mean that
each functor $-\ox A\colon \cc\to\cc$ has a right adjoint $[A,-]$, so that there are natural
isomorphisms $\cc(XA,B)\cong \cc(X,[A,B])$. 
The skew monoidal structure gives rise to a skew closed structure \cite{Street-SkewClosed},
with maps 
\[ \xymatrix @R0pc { 
[B,C] \ar[r]^-{L} & [[A,B],[A,C]] \\
[I,A] \ar[r]^{i} & A \\
I \ar[r]^{j} & [A,A]. } \]
The associativity map $a$ determines a map $t\colon [AB,Y]\to
[A,[B,Y]]$, which may be constructed from $L$ as the composite
\[ \xymatrix{
[AB,C] \ar[r]^-{L} & [[B,AB],[B,C]] \ar[r]^-{[u,1]} & [A,[B,C]] } \]
where $u$ is the unit of the tensor-hom adjunction. Conversely, $L$ can be constructed from $t$ as the composite
\[ \xymatrix{
  [B,C] \ar[r]^-{[\epsilon,1]} & [[A,B]A,C] \ar[r]^{t} & [[A,B],[A,C]] } \]
where $\epsilon\colon [A,B]A\to B$ is the counit of the tensor-hom adjunction.

There is a bijection between natural isomorphisms 
\begin{equation}
s\colon (XA)B \to (XB)A
\end{equation}
and natural isomorphisms
\begin{equation}
s^{\prime}\colon [A,[B,Y]] \to [B,[A,Y]]
\end{equation}
as related by the commutative square
\[ \xymatrix{
\cc((XA)B,Y) \ar[d] \ar[rr]^{\cc(s,1)} && \cc((X B)A ,Y) \ar[d] \\
\cc(X,[A,[B,Y]]) \ar[rr]^{\cc(1,s^{\prime})} && \cc(X,[B,[A,Y]])
} \]
in which the vertical maps are composites of adjointness isomorphisms.  A useful way to think of this correspondence is to write $T_A\colon \cc\to\cc$ for the functor sending $X$ to $XA$, and $H_A$ for its right adjoint. Then, for given $A$ and $B$, the $s\colon (XA)B\to (XB)A$ can be seen as the components of a natural transformation $T_B T_A\to T_AT_B$. Since $T_A\dashv H_A$ and $T_B\dashv H_B$, we may compose adjunctions to obtain $T_BT_A\dashv H_A H_B$, and now $s'\colon H_AH_B\to H_BH_A$ is simply the mate of $s\colon T_AT_B\to T_BT_A$.

\begin{theorem}\label{thm:closed}
Let $\cc$ be closed skew monoidal.  The equations
\eqref{eq:S2}--\eqref{eq:S1} for a natural isomorphism $s\colon (XA)B \to
(XB)A$ correspond, in turn, to the equations
\eqref{eq:bourkeS2}--\eqref{eq:bourkeS1} for $s^{\prime}\colon [A,[B,Y]] \to
[B,[A,Y]]$.  In particular, there is a bijection between
braidings or symmetries on $\cc$ as a skew monoidal category
and those on $\cc$ as a skew closed category.
\end{theorem}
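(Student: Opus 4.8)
The plan is to exploit the closed structure systematically via the mate correspondence already set up in the excerpt. Writing $T_A\colon\cc\to\cc$ for $(-)A$ with right adjoint $H_A=[A,-]$, the natural isomorphism $s$ is a transformation $T_BT_A\to T_AT_B$ and $s'$ is its mate $H_AH_B\to H_BH_A$ under the composite adjunctions $T_BT_A\dashv H_AH_B$ and $T_AT_B\dashv H_BH_A$. The key general principle is that mateship is a bijection compatible with pasting: a pasting identity among $2$-cells built from the $s$'s, the associators $a$ (which are themselves mates of the maps $t$), and identities, holds if and only if the corresponding pasting identity among the mates holds. So each of \eqref{eq:S2}--\eqref{eq:S1} translates into a unique diagram built from $s'$, $t$ (equivalently $L$), and identities, and the content of the theorem is that these translated diagrams are precisely \eqref{eq:bourkeS2}--\eqref{eq:bourkeS1}.

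I would carry this out axiom by axiom. First I would record the basic dictionary: since $a\colon (XA)B\to X(AB)$ is (the $X$-component of) a transformation $T_BT_A\to T_{AB}$, its mate is exactly the map $t\colon[AB,Y]\to[A,[B,Y]]$, i.e.\ a transformation $H_AH_B\to H_{AB}$; and the two triangle-style reconstructions of $L$ from $t$ and of $t$ from $L$ given in the excerpt show $L$ and $t$ determine each other, so I may phrase the closed-side diagrams in terms of whichever is more convenient (the bourke diagrams are written with $L$, so I would convert). Then, for \eqref{eq:S3a}: reading it as a pasting of $s$'s and $a$'s among the functors $T_\bullet$, take mates of every edge. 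The $s1$ edge $((XA)B)C\to((XB)A)C$ becomes $H_C$ applied to $s'$, the bare $s$ becomes $s'$, the $a$'s become $t$'s; rearranging the resulting square gives exactly \eqref{eq:bourkeS3}. The same mechanical procedure applied to \eqref{eq:S3b} yields \eqref{eq:bourkeS3b}; applied to \eqref{eq:S2} (a hexagon in the $T$'s) yields the hexagon \eqref{eq:bourkeS2}; applied to \eqref{eq:S1} yields \eqref{eq:bourkeS1}; and applied to \eqref{eq:S*} — which only involves $a$'s and $s$'s, no unit maps — yields \eqref{eq:bourkeS*}. Since mateship is bijective, each equivalence is an \emph{iff}, and the final sentence about bijections of braidings and symmetries follows immediately.

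The main obstacle will be \eqref{eq:S*} versus \eqref{eq:bourkeS*}, for two reasons. First, \eqref{eq:bourkeS*} is stated in the ``enriched-naturality'' form involving $[s',1]$ and nested internal homs $[[B,[C,X]],\dots]$, so one has to check that translating \eqref{eq:S*} through the mate calculus and through the $L$-versus-$t$ dictionary really does reproduce this precise shape, including the appearance of $[s',1]$ (a whiskering of $s'$ on the \emph{contravariant} side). This is where the bookkeeping of which variable of $L$ is contravariant matters, and it is the step I would write out most carefully. Second, one should double-check that \eqref{eq:S*} genuinely involves no unit data, so that its mate is a clean identity among $s'$ and $L$ with no $i$ or $j$ intruding — this is consistent with the remark in the excerpt that, classically, \eqref{eq:S*} is automatic. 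The remaining axioms are routine: \eqref{eq:S2}, \eqref{eq:S3a}, \eqref{eq:S3b}, \eqref{eq:S1} each involve only $a$'s, $s$'s and identities, so their mates are obtained by the purely formal ``reverse every $2$-cell, swap $T$ for $H$, swap $a$ for $t$'' recipe, and matching the output against \eqref{eq:bourkeS2}, \eqref{eq:bourkeS3}, \eqref{eq:bourkeS3b}, \eqref{eq:bourkeS1} is then a direct comparison of diagrams.
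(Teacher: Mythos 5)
Your proposal is correct and follows essentially the same route as the paper: both translate each axiom through the mate correspondence $T_BT_A\dashv H_AH_B$, with $a$ corresponding to $t$ and the $L$-form of the closed-side axioms recovered from the $t$-form. The one point worth noting is that the paper performs the $t$-to-$L$ conversion not by substituting the reconstruction formula for $L$ in terms of $t$, but by a second application of the mate calculus (with respect to $T_C\dashv H_C$, viewing the two sides as natural transformations between contravariant functors), which is precisely the contravariance bookkeeping you flag as the delicate step for \eqref{eq:S*}.
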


\begin{proof}
Routine calculation shows that \eqref{eq:S2} and \eqref{eq:bourkeS2}
are equivalent, and  likewise \eqref{eq:S1} and
\eqref{eq:bourkeS1}. The remaining cases require a little more work.
 First we establish the correspondence between \eqref{eq:S3a} and \eqref{eq:bourkeS3}.  The equation \eqref{eq:S3a} asserts the commutativity of
\[ \xymatrix{
  T_CT_BT_A \ar[r]^{1s} \ar[d]_{a1} & T_CT_AT_B \ar[r]^{s1} & T_AT_CT_B \ar[d]^{1a} \\
  T_{BC}T_A \ar[rr]_{s} && T_AT_{BC} } \]
and, on taking mates, we see that this is equivalent to
\[ \xymatrix{
  H_{BC}H_A \ar[rr]^{s'} \ar[d]_{t1} && H_A H_{BC} \ar[d]^{1t} \\
  H_BH_CH_A \ar[r]_{1s'} & H_BH_AH_C \ar[r]_{s'1} & H_AH_BH_C. } \]
There is a contravariant functor $P$ sending $X\in\cc$ to $H_XH_A$, and we may regard the domain $H_{BC}H_A$ of the above displayed equation as this contravariant functor applied to $X=T_CB$. Similarly there is a contravariant functor $Q$ sending $X$ to $H_AH_XH_C$, and the codomain of the displayed equation is $QB$. The equation asserts the equality of two natural maps $P(T_CB)\to Q(B)$.   
Taking mates once again, this time with respect to the adjunction
$T_C\dashv H_C$, and noting the contravariance of $P$ and $Q$, we
see that this is equivalent to an equation between two induced maps $P(B)\to Q(H_CB)$; specifically, to commutativity of
\[ \xymatrix{
  H_B H_A \ar[rr]^{s'}  \ar[d]_{L1} && H_AH_B \ar[d]^{1L} \\
  H_{H_CB}H_C H_A \ar[r]_{1s'} & H_{H_CB}H_AH_C \ar[r]_{s'1} & H_A H_{H_CB}H_C } \]
which is the displayed equation \eqref{eq:bourkeS3} of the proposition.

 Since \eqref{eq:S3b} for $s$ is \eqref{eq:S3a} for its inverse,
and similarly \eqref{eq:bourkeS3b} for $s'$ is \eqref{eq:bourkeS3} for
its inverse, the argument above shows that \eqref{eq:S3b} is
equivalent to \eqref{eq:bourkeS3b}. 

Finally we establish the correspondence between ~\eqref{eq:S*} and \eqref{eq:bourkeS*}.  First observe that a morphism $f\colon A\to B$ in \cc induces a natural transformation $T_f\colon T_A\to T_B$, whose component at an object $X$ is $1f\colon XA\to XB$; this in turn has a mate $H_f\colon H_B\to H_A$.
Then  we may express \eqref{eq:S*} as
\[ \xymatrix{
  T_CT_BT_A \ar[r]^{1a} \ar[d]_{s1} & T_CT_{AB} \ar[r]^{a} & T_{(AB)C} \ar[d]^{T_s} \\
  T_BT_CT_A \ar[r]_{1a} & T_BT_{AC} \ar[r]_{a} & T_{(AC)B} } \]
which, on taking mates, becomes
\[ \xymatrix{
  H_{(AC)B} \ar[r]^{t} \ar[d]_{H_s} & H_{AC} H_B \ar[r]^{t1} & H_AH_CH_B \ar[d]^{1s'} \\
  H_{(AB)C} \ar[r]_{t} & H_{AB} H_C \ar[r]_{t1} & H_A H_B H_C. } \]
We can regard this as an equality of maps $P(T_BT_cA)\to Q(A)$ for contravariant functors $P$ and $Q$, and so on taking mates as an equality of maps $P(A)\to Q(H_CH_BA)$; specifically, the equality of the upper composites, and hence also of the lower composites, in the following two diagrams.
\[ \xymatrix{
  H_A \ar[r]^{H_\epsilon} \ar[dr]^{H_\epsilon} \ar[ddr]_{L} & H_{(H_BA)B} \ar[r]^{H_{\epsilon1}} & H_{((H_CH_BA)C)B} \ar[dr]^{H_s} \\
  & H_{(H_CA)C} \ar[r]^{H_{\epsilon1}} \ar[d]^{t} & H_{((H_BH_CA)B)C} \ar[r]^{H_{(s'1)1}} \ar[d]^{t} & H_{((H_CH_BA)B)C} \ar[d]^{t} \\
  & H_{H_CA}H_C \ar[r]^{H_\epsilon1} \ar[dr]_{L1} & H_{(H_BH_CA)B}H_C \ar[r]^{H_{s'1}1} \ar[d]^{t1} & H_{(H_CH_BA)B}H_C \ar[d]^{t1} \\ 
  && H_{H_BH_CA}H_BH_C \ar[r]_{H_{s'}11} & H_{H_CH_BA}H_BH_C } \]
\[ \xymatrix{
  H_A \ar[r]^{H_{\epsilon}} \ar[dr]_{L} & H_{(H_BA)B} \ar[r]^{H_{\epsilon1}} \ar[d]^{t} & H_{((H_CH_BA)C)B} \ar[d]^{t} \\
  & H_{H_BA}H_B \ar[r]^{H_\epsilon1} \ar[dr]_{L} & H_{(H_CH_BA)C}H_B \ar[d]^{t1} \\
  && H_{H_CH_BA}H_CH_B \ar[r]^{1s'} & H_{H_CH_BA}H_BH_C } \]
The equality of the lower composites is the condition \eqref{eq:bourkeS*} stated in the proposition.
\end{proof}

We conclude the section by showing that the unit axiom (S4) of \cite{bourko-skew} is redundant.

\begin{proposition}
If \eqref{eq:bourkeS3} commutes, then so too does 
\begin{equation}
  \label{eq:s'r}
  \xymatrix{
[I,[B,C]] \ar[r]^{s'} \ar[dr]_{i} & [B,[I,C]] \ar[d]^{[1,i]} \\ &
[B,C]. } 
\end{equation}
\end{proposition}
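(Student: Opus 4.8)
The plan is to exploit the duality between the skew closed and skew monoidal pictures established in Theorem~\ref{thm:closed}, rather than to manipulate the hexagon \eqref{eq:bourkeS3} directly. Under the tensor--hom correspondence, \eqref{eq:bourkeS3} corresponds to \eqref{eq:S3a}, and the target diagram \eqref{eq:s'r} should correspond to the statement of Proposition~\ref{prop:s.r}, namely that $r\colon WA\to (WA)I$ composed with $s\colon (WA)I\to (WI)A$ equals $r1\colon WA\to (WI)A$. So the first thing I would do is verify, by taking mates, that \eqref{eq:s'r} is precisely the adjoint transpose of the triangle in Proposition~\ref{prop:s.r}: the map $r\colon W A\to (WA)I$ has mate $i\colon [I,[A,C]]\to [A,C]$ (the counit-type map built from the skew closed unit), while $r1$ has mate $[1,i]\circ(\text{something})$; tracking through, the triangle $s\cdot r=r1$ transposes to the triangle $[1,i]\cdot s' = i$, which is exactly \eqref{eq:s'r}.

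Granting that identification, the proof reduces to the observation that Proposition~\ref{prop:s.r} was deduced from \eqref{eq:S3a} alone (via Lemma~\ref{lemma:s.r}), and that \eqref{eq:S3a} is in turn equivalent to \eqref{eq:bourkeS3} by Theorem~\ref{thm:closed}; hence \eqref{eq:bourkeS3} implies \eqref{eq:s'r}. Concretely, I would structure the write-up as: (i) recall that by the bijection of Theorem~\ref{thm:closed} the hypothesis \eqref{eq:bourkeS3} corresponds to \eqref{eq:S3a} holding for the associated $s$; (ii) invoke Proposition~\ref{prop:s.r} to get the commuting triangle relating $r$, $r1$ and $s$; (iii) take the mate of that triangle under the adjunctions $-\ox A\dashv[A,-]$ to land on \eqref{eq:s'r}.

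The main obstacle is step (iii): the mate calculation that turns the skew monoidal triangle of Proposition~\ref{prop:s.r} into the skew closed triangle \eqref{eq:s'r}. One must be careful because $r$ is \emph{not} a unit or counit of one of the tensor--hom adjunctions but a structural map of the skew monoidal category, so its mate is not simply an identity; one has to recognise that the mate of $r\colon W\,-\,\to (W\,-)I$ under $-\ox I\dashv [I,-]$, precomposed appropriately, gives the map $i$, using the standard compatibility (from \cite{Street-SkewClosed}) between the skew monoidal maps $a,r,\ell$ and the skew closed maps $t, i, j$. Equivalently, and perhaps more cleanly, one can redo the argument of Lemma~\ref{lemma:s.r} and Proposition~\ref{prop:s.r} natively in the closed setting, chasing \eqref{eq:bourkeS3} with $Y$ and the relevant unit maps; but this amounts to the same diagram chase transported across the adjunction. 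I would present the short argument via Theorem~\ref{thm:closed} and Proposition~\ref{prop:s.r}, remarking that one may alternatively give a direct diagram chase from \eqref{eq:bourkeS3}.

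Consequently, combining this proposition with the bijection of Theorem~\ref{thm:closed}, our Definition~\ref{defn:braiding} of symmetric skew closed category implies the one of \cite{bourko-skew}; this is recorded below as Corollary~\ref{cor:red}.

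\begin{proof}
Under the bijection of Theorem~\ref{thm:closed} between natural isomorphisms $s'\colon[A,[B,Y]]\to[B,[A,Y]]$ and $s\colon (XA)B\to(XB)A$, the hypothesis that \eqref{eq:bourkeS3} commutes is equivalent to the commutativity of \eqref{eq:S3a} for the corresponding $s$. By Proposition~\ref{prop:s.r}, it follows that the triangle
\[ \xymatrix{
WA \ar[r]^{r} \ar[dr]_{r1} & (WA)I \ar[d]^{s} \\
& (WI)A } \]
commutes. Taking mates with respect to the adjunctions $-\ox A\dashv[A,-]$ and $-\ox I\dashv[I,-]$, and using the description (from \cite{Street-SkewClosed}) of the skew closed map $i\colon[I,C]\to C$ as the mate of the skew monoidal map $r\colon W\to WI$, the triangle above transposes precisely to the triangle \eqref{eq:s'r}. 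Hence \eqref{eq:s'r} commutes. (Alternatively, one may obtain \eqref{eq:s'r} by a direct diagram chase from \eqref{eq:bourkeS3}, transporting the arguments of Lemma~\ref{lemma:s.r} and Proposition~\ref{prop:s.r} across the tensor--hom adjunction.)
\end{proof}
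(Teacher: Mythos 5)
Your mate calculation is sound as far as it goes: in the closed skew monoidal setting, $i$ is indeed the mate of $r$, and the triangle of Proposition~\ref{prop:s.r} does transpose under $T_A\dashv H_A$ and $T_I\dashv H_I$ to the triangle \eqref{eq:s'r}. But there is a genuine gap of scope. The proposition is stated for an arbitrary braided skew closed category in the sense of Section~\ref{sect:closed} (and this generality matters: Corollary~\ref{cor:red} uses it to conclude that axiom (S4) of \cite{bourko-skew} is redundant for general symmetric skew closed categories). A skew closed category need not be of the form $[A,B]$ with each $[A,-]$ admitting a left adjoint $-\ox A$; when it is not, there is no tensor product, no map $r$, no natural transformation $s$, and no mate calculus to invoke. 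Theorem~\ref{thm:closed} and Proposition~\ref{prop:s.r} are therefore simply unavailable as hypotheses, and your argument only establishes the proposition in the representable case. Your parenthetical fallback --- that one may ``redo the argument of Lemma~\ref{lemma:s.r} and Proposition~\ref{prop:s.r} natively in the closed setting'' --- points in the right direction but is not carried out, and it is not literally a transport across the adjunction, since in general there is no adjunction to transport across.

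The paper's own proof is exactly such a native argument: it instantiates \eqref{eq:bourkeS3} at $Y=C$ with the third object $[I,A]$ (via $[i,1]$ and naturality of $s'$), uses one of Street's skew closed axioms to collapse a triangle, cancels the invertible $s'$, sets $A=I$, and then finishes with a second diagram chase using $j$, functoriality of the internal hom, naturality of $i$, and the remaining skew closed axioms. If you want to salvage your write-up, you should either restrict the statement to closed skew monoidal categories (which would weaken Corollary~\ref{cor:red}) or replace steps (i)--(iii) by an intrinsic chase of this kind, using only $L$, $i$, $j$ and the axioms of \cite{Street-SkewClosed}.
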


\proof
By \eqref{eq:bourkeS3}, the large rectangular region in 
\[ \xymatrix @C1pc {
[A,[B,C]] \ar[d]_{L} \ar[rr]_{s'} \ar@/^2pc/[rrr]^{[i,1]} &&
[B,[A,C]] \ar[d]_{[1,L]}  \ar[ddr]^{[1,[i,1]]} & [[I,A],[B,C]]
\ar[dd]^{s'} \\
[[I,A],[I,[B,C]]] \ar[r]_{[1,s']} & [[I,A],[B,[I,C]]] \ar[r]^{s'}
\ar[dr]_{[1,[1,i]]} &
[B,[[I,A],[I,C]]] \ar[dr]_{[1,[1,i]]} \\
&& [[I,A],[B,C]] \ar[r]_{s'} & [B,[[I,A],C]] } \]
commutes, while the other two quadrilaterals commute by naturality of
$s'$, and the triangular region by one of the skew closed category
axioms. Thus the exterior commutes. Cancel the isomorphism $s'$ at the
end of each composite and set $A=I$ to deduce commutativity of the
upper region of the diagram 
\[ \xymatrix{
[I,[B,C]] \ar[d]^{L} \ar[rr]^{1} \ar@/_4pc/[ddd]_1 && [I,[B,C]]  \ar[d]_{[i,1]}
\ar@/^3pc/[dd]^{1} \\
[[I,I],[I,[B,C]]] \ar[r]^{[1,s']} \ar[d]^{[j,1]} & [[I,I],[B,[I,C]]]
\ar[d]_{[j,1]} \ar[r]^{[1,[1,i]]} & [[I,I],[B,C]] \ar[d]_{ [j,1]} \\
[I,[I,[B,C]]] \ar[r]_{[1,s']} \ar[d]^{i} & [I,[B,[I,C]]]
\ar[r]_{[1,[1,i]]}  & [I,[B,C]] \ar[d]_{i} \\
[I,[B,C]] \ar[r]_{s'} & [B,[I,C]] \ar[r]_{[1,i]} & [B,C] }
\]
in which the central regions commute by functoriality of the internal
hom, the lower region by naturality of $i$,  and the left and right
regions by skew closed category axioms. 
\endproof

\begin{corollary}\label{cor:red}
  If \eqref{eq:bourkeS3} commutes then axiom (S4) of
  \cite{bourko-skew} holds; that is, the composite 
\[ \xymatrix{
[B,C] \ar[r]^-{L} & [[B,B],[B,C]] \ar[r]^-{[j,1]} & [I,[B,C]]
\ar[r]^{s'} & [B,[I,C]] \ar[r]^-{[1,i]} & [B,C] } \]
is the identity. In particular, axiom (S4) of \cite{bourko-skew} is
redundant.  
\end{corollary}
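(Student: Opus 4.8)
The plan is to deduce this directly from the preceding proposition together with one of the skew closed category axioms, so that essentially no new diagram chasing is needed. The preceding proposition tells us that, under \eqref{eq:bourkeS3}, the triangle \eqref{eq:s'r} commutes; that is, $[1,i]\circ s' = i$ as morphisms $[I,[B,C]]\to[B,C]$, where on the right $i$ denotes $i_{[B,C]}$ and on the left $[1,i]$ denotes $[B,i_C]$.

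The first step is to substitute this identity into the composite whose triviality we must establish. Its last two factors are precisely $[I,[B,C]]\xrightarrow{s'}[B,[I,C]]\xrightarrow{[1,i]}[B,C]$, which by the above collapse to $i\colon[I,[B,C]]\to[B,C]$. Hence the composite of the corollary equals
\[ [B,C]\xrightarrow{L}[[B,B],[B,C]]\xrightarrow{[j,1]}[I,[B,C]]\xrightarrow{i}[B,C]. \]

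The second step is simply to observe that this last composite is the identity: it is one of the axioms defining a skew closed category (the skew analogue of the Eilenberg--Kelly axiom asserting that $i\circ[j,1]\circ L = 1$). This proves that axiom (S4) of \cite{bourko-skew} holds. For the final assertion, note that (S4) has been derived using only \eqref{eq:bourkeS3}, which is itself one of the axioms we require of a braiding; hence (S4) need not be imposed separately, i.e.\ it is redundant.

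I expect no genuine obstacle here. The only points needing a little care are matching the various instances of $i$ and $j$ to the correct objects when invoking the preceding proposition, and citing the appropriate one of the skew closed category axioms for the collapsed composite; the substantive computation has already been carried out in the proof of that proposition.
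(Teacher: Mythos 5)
Your proposal is correct and matches the paper's own proof: both collapse the last two factors $[1,i]\circ s'$ to $i$ using the triangle \eqref{eq:s'r} established in the preceding proposition, and then invoke the skew closed category axiom $i\circ[j,1]\circ L=1$. No issues.
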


\proof
Use \eqref{eq:bourkeS3} to replace the last two factors in the
displayed composite by $i\colon [I,[B,C]]\to [B,C]$, then use one of
the skew closed category axioms to deduce that the resulting composite
is the identity. 

Since \eqref{eq:bourkeS3} is (S3) of \cite{bourko-skew}, it follows
that (S4) is redundant as claimed.
\endproof

\section{Braided cowarpings and bialgebras}
\label{sect:bialgebra}

For this section, we suppose that \cc is in fact a monoidal category,
and often write as if it were strict, and write $X.Y$ for the product
of $X$ and $Y$. Some aspects would work more
generally for a skew monoidal category. 

\subsection{Skew cowarpings}

A {\em skew cowarping} on \cc is a skew warping \cite{skew} on
$\cc\op\rev$. Explicitly, this involves the following data 
\begin{itemize}
\item a functor $Q\colon \cc\to\cc$
\item an object $K\in\cc$
\item maps $v\colon QX.QY\to Q(X.QY)$
\item $v_0\colon I\to QK$
\item $k\colon K.QX\to X$
\end{itemize}
 subject to five axioms.  The ``cowarped'' tensor product is given by $X*Y=X.QY$ with unit $K$. 
The structure maps are
\[ \xymatrix @R0pc { 
(X*Y)*Z \ar@{=}[r] &X.QY.QZ \ar[r]^{1.v} & X.Q(Y.QZ) \ar@{=}[r] &
X*(Y*Z) \\
K*X \ar@{=}[r] & K.QX \ar[r]^{k} & X \\
& X \ar[r]^{1.v_0} & X.QK \ar@{=}[r] & X*K } \]
and this defines a skew monoidal category $\cc[Q]$. 

\begin{proposition}\label{prop:braided-cowarping}
Let $\cc$ be a monoidal category, $Q$ a skew cowarping, and $$y\colon QX.QY\to QY.QX$$ a natural isomorphism.
The induced maps 
\[ \xymatrix{
(X*Y)*Z \ar@{=}[r] & X.QY.QZ \ar[r]^{1.y} & X.QZ.QY \ar@{=}[r] &
(X*Z)*Y } \]
equip $\cc[Q]$ with the structure of a braided skew monoidal category if and only if the following diagrams commute.
\begin{equation}
  \label{eq:1}
  \xymatrix{
QX.QY.QZ \ar[r]^{y.1} \ar[d]_{1.y} & QY.QX.QZ \ar[r]^{1.y} & QY.QZ.QX
\ar[d]^{y.1} \\
QX.QZ.QY \ar[r]_{y.1} & QZ.QX.QY \ar[r]_{1.y} & QZ.QY.QX }
\end{equation}
\begin{equation}
  \label{eq:1a}
  \xymatrix{ QX.QY.QZ \ar[d]_{1.v} \ar[r]^{y.1} & QY.QX.QZ
    \ar[r]^{1.y}  & QY.QZ.QX    \ar[d]^{v.1} \\
QX.Q(Y.QZ) \ar[rr]_{y} && Q(Y.QZ).QX }
\end{equation}
\begin{equation}
  \label{eq:1b}
  \xymatrix{ QX.QY.QZ \ar[r]^{1.y} \ar[d]_{v.1} & QX.QZ.QY
    \ar[r]^{y.1} & QZ.QX.QY \ar[d]^{1.v} \\
Q(X.QY).QZ \ar[rr]_{y} & & QZ.Q(X.QY) }
\end{equation}
\begin{equation}
  \label{eq:funny}
  \xymatrix{
QX.QY.QZ \ar[r]^{v.1} \ar[d]_{1.y} & Q(X.QY).QZ \ar[r]^{v} &
Q(X.QY.QZ) \ar[d]^{Q(1.y)} \\
QX.QZ.QY \ar[r]_{v.1} & Q(X.QZ).QY \ar[r]_{v} & Q(X.QZ.QY) }
\end{equation}
\end{proposition}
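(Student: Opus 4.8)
The plan is to translate each of the four braiding axioms \eqref{eq:S2}, \eqref{eq:S3a}, \eqref{eq:S3b}, \eqref{eq:S*} for the candidate $s$ on $\cc[Q]$ into a statement purely about $Q$, $v$, and $y$, and to observe that after cancelling the data maps (which are isomorphisms or can be peeled off by naturality) what remains is exactly \eqref{eq:1}, \eqref{eq:1a}, \eqref{eq:1b}, \eqref{eq:funny}, in that order. Since $\cc$ is assumed monoidal and we work strictly, the underlying object $(X*Y)*Z$ is literally $X.QY.QZ$ and $(X*Z)*Y$ is $X.QZ.QY$, so $s$ at $(X,A,B)$ is $1_X.y_{A,B}$; similarly the composite $s1$ appearing in the axioms becomes $1.y.1$ and so on. The key technical point is that each axiom diagram, after substituting the explicit descriptions of $a$ (which is $1.v$ in suitable position) and $s$, becomes a diagram in $\cc$ all of whose vertices are of the form $W.(\text{string of }Q\text{'s applied to things})$ for a fixed $W=X$; one then strips off the common leading $X.{}$ by naturality/faithfulness of $X.(-)$ having a section, reducing to a diagram whose vertices are strings in $Q$ applied to $Y,Z$ (and auxiliary objects).

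Concretely: axiom \eqref{eq:S2} involves only instances of $s$ (no $a$), so after stripping the leading object it is visibly \eqref{eq:1}, with the three objects $A,B,C$ renamed appropriately. For \eqref{eq:S3a} the left-hand path uses $a1$ then $s$, i.e. $(1.v).1$ followed by the appropriate $1.y$, and the right-hand path uses $1s$, $s1$, $a1$; writing everything out and cancelling the leading $X$ leaves precisely \eqref{eq:1a} (after possibly conjugating by the associativity constraint of $\cc$, which is where "often write as if it were strict" does real work). Dually \eqref{eq:S3b} gives \eqref{eq:1b} — one can also get this for free by applying the already-established equivalence to $\cc\inv$ together with Remark~\ref{rmk:s-inverse}, since the inverse braiding on $\cc[Q]$ corresponds to $y^{-1}$ and \eqref{eq:1b} for $y$ is \eqref{eq:1a} for $y^{-1}$. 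Finally \eqref{eq:S*}, which is the only axiom mixing two different $a$'s in series and a $1s$ on the right, unwinds — again after stripping the leading $X$ — to \eqref{eq:funny}, the $Q(1.y)$ on the right being exactly the image under $Q$ of the stripped $1s$.

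The main obstacle I anticipate is bookkeeping rather than conceptual: making the "strip off the leading object $X$" step rigorous when $\cc$ is only monoidal and not strict, since then the vertices are not literally $X.(\cdots)$ but are bracketed, and one must check the associativity and unit constraints of $\cc$ that get inserted all cancel (this is the content of the parenthetical remark that some of this "would work more generally for a skew monoidal category" but is cleanest in the monoidal strict case). A secondary point requiring care is that naturality of $y$ must be used to move the various $v$'s and $v_0$'s past $y$ in the correct order; once the strict-monoidal reduction is in place, however, each of the four correspondences is a direct diagram chase, and the "if and only if" is immediate because each reduction step (cancelling isomorphisms, stripping the representable $X.(-)$, applying $Q$) is reversible. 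I would therefore present the proof as: first fix strictness conventions; then treat \eqref{eq:S2}$\leftrightarrow$\eqref{eq:1}; then \eqref{eq:S3a}$\leftrightarrow$\eqref{eq:1a}; then deduce \eqref{eq:S3b}$\leftrightarrow$\eqref{eq:1b} via $\cc\inv$; then \eqref{eq:S*}$\leftrightarrow$\eqref{eq:funny}; and conclude.
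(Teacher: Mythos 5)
Your proposal is correct and takes essentially the same approach as the paper's (very terse) proof: each braiding axiom for $\cc[Q]$ is literally the corresponding $y$-equation whiskered by the leading object $X$, so the four equations imply the four axioms by functoriality, and conversely the axioms specialize back to the equations. The one point where the paper is sharper than your appeal to ``reversibility'' and to $X.(-)$ ``having a section'' is the converse direction, which it justifies simply by taking the first variable in \eqref{eq:S2}--\eqref{eq:S*} to be the monoidal unit $I$ of $\cc$ (so that $(I*Y)*Z=QY.QZ$ and $s$ becomes $y$) --- this is the precise form of the step you leave vague.
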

Once again, \eqref{eq:1b} is just \eqref{eq:1a} for $y^{-1}$.

\begin{proof}
It is straightforward to see that the above four equations imply, in turn, the four equations \eqref{eq:S2} to \eqref{eq:S*} for a braiding on $\cc[Q]$.  In the opposite direction one obtains the above four equations above by taking the first variable in \eqref{eq:S2} to \eqref{eq:S*} to be $I$.

\end{proof}
We shall refer to a natural isomorphism $y \colon QX.QY \to QY.QX$  satisfying the above four axioms as a \emph{braiding} on the skew cowarping $Q$.  

\begin{remark}\label{rmk:Igenerator}
Many monoidal categories \cc have the following property: for any two
objects $X$ and $A$, the maps 
\[ \xymatrix{IA \ar[r]^{x1} & XA,} \]
where $x\colon I\to X$, are jointly epimorphic. This means that two
morphisms $f,g\colon XA\to B$ are equal provided that their composites
with $x1$ are equal for any $x$. In particular this is true if \cc is right-closed and $\cc(I,-)$ is faithful. Examples include the categories of
sets, of $R$-modules for a commutative ring $R$, or of topological spaces; non-examples include the category of categories and the category of chain complexes. In this context we can strengthen the preceding result.
\end{remark}
\begin{proposition}\label{prop:special}
Let $\cc$ be a monoidal category having the property that:
\begin{itemize}
\item 
for any two objects $X$ and $A$ the maps $x1\colon IA \to XA$, where $x\colon I\to X$, are jointly epimorphic.  
\end{itemize}
Let $Q$ be a skew cowarping on $\cc$.  The construction of
Proposition~\ref{prop:braided-cowarping} defines a bijection between braidings on $\cc[Q]$ and braidings on $Q$.
\end{proposition}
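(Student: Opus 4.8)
The plan is to recognise the construction of Proposition~\ref{prop:braided-cowarping} as a bijection by producing its inverse explicitly. Write $\Phi$ for that construction, so $\Phi(y)$ is the braiding on $\cc[Q]$ with components $\Phi(y)_{X,A,B}=1.y_{A,B}\colon X.QA.QB\to X.QB.QA$. For the inverse, given a braiding $s$ on $\cc[Q]$ I would set $\Psi(s)=y$, where $y_{A,B}\colon QA.QB\to QB.QA$ is obtained from $s_{I,A,B}\colon I.QA.QB\to I.QB.QA$ by removing the unit isomorphisms of $\cc$ (in the strict picture, $y_{A,B}$ simply \emph{is} $s_{I,A,B}$). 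Naturality of $y$ in $A$ and $B$ comes from naturality of $s$ in its last two variables --- recalling that a morphism $g$ acts on $X*A=X.QA$ as $1.Qg$ --- and, exactly as in the proof of Proposition~\ref{prop:braided-cowarping}, putting the first variable equal to $I$ in \eqref{eq:S2}--\eqref{eq:S*} for $s$ yields \eqref{eq:1}--\eqref{eq:funny} for $y$. Hence $\Psi$ really does take values in braidings on the skew cowarping $Q$.

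It then remains to verify that $\Psi$ and $\Phi$ are mutually inverse. The identity $\Psi\Phi=\mathrm{id}$ is immediate, since $\Phi(y)_{I,A,B}=1.y_{A,B}$ has unit-stripped form $y$. For $\Phi\Psi=\mathrm{id}$ one must show that a braiding $s$ on $\cc[Q]$, with $y=\Psi(s)$, satisfies $s_{X,A,B}=1.y_{A,B}$ for \emph{every} object $X$, and this is where the hypothesis is used. Applying it with $QA.QB$ in place of the object ``$A$'' of the hypothesis (and using that reassociation in $\cc$ is an isomorphism), the family of maps $x.1\colon I.QA.QB\to X.QA.QB$ indexed by $x\colon I\to X$ is jointly epimorphic, so it is enough to see that $s_{X,A,B}$ and $1.y_{A,B}$ agree after precomposition with each $x.1$. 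But naturality of $s$ in the first variable, the definition of $y$, and bifunctoriality of the tensor give
\[
s_{X,A,B}\circ(x.1.1)=(x.1.1)\circ s_{I,A,B}=(x.1.1)\circ(1.y_{A,B})=(1.y_{A,B})\circ(x.1.1),
\]
so $s_{X,A,B}=1.y_{A,B}=\Phi(y)_{X,A,B}$. With the well-definedness of $\Phi$ and $\Psi$ already in hand, this establishes the bijection.

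I do not anticipate a real obstacle: the only fiddly point is the bookkeeping of the unit and associativity isomorphisms of $\cc$ relating $(I\otimes QA)\otimes QB$, $I\otimes(QA\otimes QB)$ and $QA\otimes QB$, but these are natural isomorphisms, so they disturb neither the joint-epimorphism hypothesis nor the naturality and bifunctoriality manipulations above, and in the strict picture --- which we may use --- they disappear. The one genuine idea is that the jointly epimorphic family $\{x.1\colon I.A'\to X.A'\}_{x\colon I\to X}$ lets one propagate the identity $s=1.y$ from the unit object out to all objects.
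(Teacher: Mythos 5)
Your proposal is correct and follows essentially the same route as the paper: the inverse is obtained by evaluating $s$ at first variable $I$, the composite $\Psi\Phi=\mathrm{id}$ is immediate, and the hypothesis is used (via the jointly epimorphic family $x.1\colon I.(QA.QB)\to X.(QA.QB)$ together with naturality of $s$ in its first variable) to propagate $s_{X,A,B}=1.y_{A,B}$ from $X=I$ to all $X$. The paper's own proof is terser and leaves this last step implicit, so your explicit epimorphism argument is exactly the intended content.
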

\begin{proof}  
Without the assumption we may obtain a braiding on $Q$ from a braiding $s$ on $\cc[Q]$ using $s\colon (I*Y)*Z\to (I*Z)*Y$.
Applying this to a braiding on $\cc[Q]$ arising, as in
Proposition~\ref{prop:braided-cowarping}, from one on $Q$ returns the
original braiding on $Q$.
But to see that every braiding on $\cc[Q]$ arises in this way, we  rely on the assumption.
\end{proof}

\subsection{Monoidal comonads}

A monoidal comonad $G=(G,\delta,\epsilon, G_2,G_0)$ on \cc determines a skew cowarping with $QX=GX$
and $K=I$, and with $v$ given by 
\[ \xymatrix{ GX.GY \ar[r]^-{1.\delta} & GX.G^2Y \ar[r]^{G_2} & G(X.GY)
  } \]
with
$v_0$ and $k$ defined using $G_0\colon GI\to I$ and $\epsilon$ respectively; 
conversely, any skew cowarping with $K=I$ arises in this way from
a monoidal comonad \cite[Proposition~3.5]{skew}.  

By a \emph{braiding} on the monoidal comonad $G$ we simply mean a braiding on the associated skew cowarping.

Given a monoidal comonad $G$, in addition to the cowarped skew
monoidal category $\cc[G]$, we can form the lifted monoidal structure on the
Eilenberg-Moore category $\cc^G$ of coalgebras.

\begin{theorem}\label{thm:eilenbergMoore}
For a monoidal comonad $G$ on a monoidal category \cc, there is a
bijection between braidings on the monoidal category $\cc^G$ and
braidings on $G$.
\end{theorem}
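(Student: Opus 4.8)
The strategy is to construct both directions of the bijection explicitly and check they are mutually inverse. A braiding on the monoidal category $\cc^G$ is (by Proposition~\ref{prop:monoidal}, since $\cc^G$ is genuinely monoidal) a natural isomorphism $c\colon (X,\xi)\otimes(Y,\psi)\to (Y,\psi)\otimes(X,\xi)$ of $G$-coalgebras satisfying the two hexagons. I would first recall that the forgetful functor $U\colon \cc^G\to\cc$ is comonadic and strong monoidal, and that the cofree coalgebras $(GX,\delta_X)$ generate $\cc^G$ under colimits; more usefully, there is a natural bijection between coalgebra morphisms out of a cofree coalgebra and arbitrary maps in $\cc$, via $\cc^G((GX,\delta),(Y,\psi))\cong\cc(GX,Y)$ composed appropriately — but the key point is that $\cc^G$ is \emph{generated} by the cofree coalgebras in the sense that every coalgebra is a (split) equalizer of cofree ones. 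Hence a natural transformation on $\cc^G$ is determined by its components at pairs of cofree coalgebras.

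\textbf{From comonad braiding to $\cc^G$ braiding.} Given a braiding $y\colon GX.GY\to GY.GX$ on $G$ in the sense above, I would first show $y$ is automatically a morphism of $G$-coalgebras $GX\otimes GY\to GY\otimes GX$ where the coalgebra structures are the canonical ones on the tensor of two cofree coalgebras (this uses \eqref{eq:funny}, which says exactly that $y$ is compatible with the comultiplication-type structure maps $v$, together with counit compatibility coming from axiom \eqref{eq:S*}-type data). Naturality in $X,Y\in\cc$ then upgrades, via the generation property, to a natural isomorphism $c$ on all of $\cc^G$: for coalgebras $(X,\xi),(Y,\psi)$ one defines $c$ by using $\xi\colon X\to GX$ and $\psi\colon Y\to GY$ to transport $y$, i.e. $c_{X,Y}$ is the unique map making the square with the cofree-coalgebra instances commute, and one checks well-definedness using that $(X,\xi)$ equalizes $\delta_X,G\xi\colon GX\rightrightarrows G^2X$. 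The two hexagon axioms for $c$ then reduce, on restricting to cofree coalgebras, precisely to \eqref{eq:1a} and \eqref{eq:1b} (the braid equations), while \eqref{eq:1} (Yang--Baxter) is the compatibility needed to see $c$ is well-defined / is automatic as in Proposition~\ref{prop:monoidal}. Invertibility of $c$ follows from invertibility of $y$.

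\textbf{From $\cc^G$ braiding to comonad braiding.} Conversely, given a braiding $c$ on $\cc^G$, restrict to cofree coalgebras: $c_{GX,GY}\colon GX\otimes GY\to GY\otimes GX$ is a coalgebra morphism, hence in particular a morphism in $\cc$, and I would show it coincides with a natural isomorphism $y\colon GX.GY\to GY.GX$ (here one must be slightly careful about the distinction between the tensor in $\cc^G$, which $U$ sends to $GX\otimes GY$ in $\cc$, and the expression $GX.GY$ appearing in the cowarping — they agree under $U$). The fact that $c_{GX,GY}$ is a coalgebra map gives \eqref{eq:funny}; the two hexagons give \eqref{eq:1a}, \eqref{eq:1b}; and \eqref{eq:1} follows from \eqref{eq:1a}, \eqref{eq:1b} as noted after Proposition~\ref{prop:braided-cowarping}, or directly from the classical Yang--Baxter consequence \cite[Proposition~1.2]{JoyalStreet-braided}. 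Finally one verifies the two assignments are mutually inverse: starting from $y$, the induced $c$ restricted back to cofree coalgebras returns $y$ essentially by construction; starting from $c$, the reconstructed $c'$ agrees with $c$ on cofree coalgebras and hence everywhere by the generation property.

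\textbf{Main obstacle.} The delicate point is the "generation" step: making precise that a natural isomorphism on $\cc^G$ satisfying the hexagons is determined by, and can be reconstructed from, its restriction to cofree coalgebras, and that every such restriction (i.e. every braiding on $G$) does extend. This is where comonadicity of $U\colon\cc^G\to\cc$ is used — every coalgebra $(X,\xi)$ is the equalizer in $\cc^G$ of $\delta_X, G\xi\colon (GX,\delta)\rightrightarrows (G^2X,\delta)$, this equalizer is preserved by each $(-)\otimes(Y,\psi)$ since $U$ creates such colimits and $-\otimes-$ in $\cc$ preserves them appropriately (one needs $\cc$ monoidal so that tensoring preserves the relevant reflexive equalizers, or one works with the split situation), and then $c_{X,Y}$ is forced. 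I expect the bookkeeping to align the cowarping notation $GX.GY$ with the monoidal product on $\cc^G$, and the verification that \eqref{eq:funny} is exactly the coalgebra-morphism condition, to be the most calculation-heavy parts, but conceptually routine given Proposition~\ref{prop:braided-cowarping} and the comonadicity of $\cc^G$.
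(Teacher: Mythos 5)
Your plan follows essentially the same route as the paper: restrict a braiding on $\cc^G$ to cofree coalgebras to read off \eqref{eq:1}--\eqref{eq:funny}, and conversely extend a braiding $y$ on $G$ along the split-equalizer presentation of coalgebras, identifying \eqref{eq:funny} with the coalgebra-morphism condition and \eqref{eq:1a}, \eqref{eq:1b} with the hexagon/braid equations. The only step you underestimate is the well-definedness of the induced $c$, which requires first deriving the compatibility of $y$ with $\delta.\delta$ (the paper's \eqref{eq:y.delta}) from \eqref{eq:1a} and \eqref{eq:1b}, using the unit compatibilities of Propositions~\ref{prop:s.r} and~\ref{prop:s.r1}.
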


\proof 
First suppose that $c$ is a braiding on $\cc^G$.  In particular, for
any cofree algebras $GX$ and $GY$ there is an isomorphism $c\colon
GX.GY\to GY.GX$ in $\cc^G$, and this is natural in $X$ and $Y$. 
By \cite[Proposition~ 2.1]{JoyalStreet-braided} the Yang-Baxter
equation \eqref{eq:1} holds. Now \eqref{eq:1a} holds by commutativity of the diagram
\[ \xymatrix{ 
GX.GY.GZ \ar[r]^{c.1} \ar[d]_{1.v} \ar@/_1pc/[rr]_{c_{GX,GY.GZ}} & GY.GX.GZ \ar[r]^{1.c}
 & GY.GZ.GX \ar[d]^{v.1} \\
GX.G(Y.GZ) \ar[rr]_{c} && G(Y.GZ).GX } \]
in which the upper region commutes by one of the braid axioms for
$\cc^G$, and the lower one by naturality of the braiding with respect
to the morphism $v\colon GY.GZ\to G(Y.GZ)$ in $\cc^G$. Equation
\eqref{eq:1b} holds by a similar, dual, argument. To see that
\eqref{eq:funny} holds, first observe that the horizontal composites
have the form 
\[ \xymatrix{
GX.GY.GZ \ar[r]^{1.\delta.1} \ar[dr]_{1.\delta.\delta} & GX.G^2Y.GZ \ar[r]^{G_2.1}
\ar[d]^{1.1.\delta} & G(X.GY).GZ \ar[d]^{1.\delta} \\
& GX.G^2Y.G^2Z \ar[r]^{G_2.1} \ar[d]_{1.G_2} & G(X.GY).G^2Z \ar[d]^{G_2} \\
& GX.G(GY.GZ) \ar[r]_{G_2} & G(X.GY.GZ)  } \]
and now \eqref{eq:funny} takes the form 
\[ \xymatrix{
GX.GY.GZ \ar[r]^{1.\delta.\delta} \ar[d]_{1.c} & GX.G^2Y.G^2Z \ar[r]^{1.G_2} &
GX.G(GY.GZ) \ar[r]^{G_2} \ar[d]^{1.Gc} & G(X.GY.GZ) \ar[d]^{G(1.c)} \\
GX.GZ.GY \ar[r]_{1.\delta.\delta} & GX.G^2Z.G^2Y \ar[r]_{1.G_2} & 
GX.G(GZ.GY) \ar[r]_{G_2} & G(X.GZ.GY) } \]
where the left region commutes because $c$ is a $G$-coalgebra
homomorphism, and the right region by naturality of $G_2$.

Suppose conversely that $y\colon GX.GY\to GY.GX$ is a braiding on $G$.
First take $X=I$ in \eqref{eq:funny}, to deduce commutativity of 
\[ \xymatrix{
GY.GZ \ar[r]_{v_0.1.1} \ar[d]_{y} \ar@/^1pc/[rr]^{\delta.1} & GI.GY.GZ \ar[r]_{v.1} \ar[d]_{1.y}
& G^2Y.GZ \ar[r]_{v} & G(GY.GZ) \ar[d]^{Gy} \\
GZ.GY \ar@/_1pc/[rr]_{\delta.1} \ar[r]^{v_0.1.1} & GI.GZ.GY \ar[r]^{v.1} & G^2Z.GY \ar[r]^{v} &
G(GZ.GY) } \]
in which the horizontal composites are the coalgebra structure maps;
thus $y$ is a coalgebra homomorphism. 
 
Then
\eqref{eq:1a} and \eqref{eq:1b} imply \eqref{eq:S3a} and
\eqref{eq:S3b}, and so by Propositions~\ref{prop:s.r} and
\ref{prop:s.r1}, the diagrams
\[ \xymatrix{
GX \ar[r]^{1.G_0} \ar[dr]_{G_0.1} & GX.GI \ar[d]^{y} & GX
\ar[r]^{G_0.1} \ar[dr]_{1.G_0} & GI.GX \ar[d]^{y} \\
& GI.GX && GX.GI 
} \] 
commute. Then 
\[ \xymatrix{
GX.GZ \ar[d]^{1.G_0.1}  \ar@/_3pc/[ddd]_{1.\delta} \ar[dr]^{G_0.1.1}
\ar[rr]^{y} && GZ.GX \ar[d]_{G_0.1.1}  \ar@/^3pc/[ddd]^{\delta.1}\\
GX.GI.GZ \ar[r]^{y.1} \ar[d]^{1.1.\delta} & GI.GX.GZ \ar[r]^{1.y} &
GI.GZ.GX \ar[d]_{1.\delta.1} \\
GX.GI.G^2Z \ar[d]^{1.G_2} && GI.G^2Z.GX \ar[d]_{G_2.1} \\
GX.G^2Z \ar[rr]_{y} && G^2Z.GX 
} \] 
commutes by \eqref{eq:1a}, and similarly 
\begin{equation}\label{eq:y.delta1}  
\xymatrix{ 
GX.GZ \ar[rr]^{y} \ar[d]_{\delta.1} && GZ.GX \ar[d]^{1.\delta} \\
G^2X.GZ \ar[rr]_{y} && GZ.G^2X } 
\end{equation}
commutes by \eqref{eq:1b}. Combining these, we see that 
\begin{equation}\label{eq:y.delta} 
\xymatrix{
GX.GZ \ar[rr]^{y} \ar[d]_{\delta.\delta} & & GZ.GX
\ar[d]^{\delta.\delta} \\
G^2X.G^2Z \ar[rr]_{y} && G^2Z.G^2X } 
\end{equation}
commutes. 

Let $(A,\alpha)$ and $(B,\beta)$ be $G$-coalgebras. The rows of 
\[ \xymatrix{
A.B \ar[r]^-{\alpha.\beta} \ar@{.>}[d]_c & GA.GB \ar[d]^{y} \ar@<1ex>[r]^{G\alpha.G\beta}
\ar@<-1ex>[r]_{\delta.\delta} & G^2A.G^2B \ar[d]^{y}  \\
B.A \ar[r]^-{\beta.\alpha} & GB.GA \ar@<1ex>[r]^{G\beta.G\alpha}
\ar@<-1ex>[r]_{\delta.\delta} & G^2B.G^2A
} \]
are split equalizers in \cc and so are equalizers in $\cc^G$. The
solid vertical $y$s commute with the rows by naturality of $y$ and
commutativity of \eqref{eq:y.delta}, thus there is a unique induced
invertible $c\colon A.B\to B.A$ making the left square commute.

It follows from \eqref{eq:y.delta}  that for cofree coalgebras $GX$
and $GY$, the map $c\colon GX.GY\to GY.GX$ is just
$y$. The braid axioms will hold for all coalgebras if and only if they
hold for cofree coalgebras. One of these holds by 
\begin{align*}
\xymatrix{
GX.GY.GZ \ar[r]^{\delta.1.\delta} \ar[d]_{c_{GX.GY,GZ}}
\ar[dr]^{\delta.\delta.\delta} & G^2X.GY.G^2Z \ar[dr]^{v.1} \\
GZ.GX.GY \ar[dr]_{\delta.\delta.\delta} & G^2X.G^2Y.G^2Z
\ar[r]^{G_2.1} \ar[d]^{c_{G^2X.G^2Y,G^2Z}} & G(GX.GY).G^2Z \ar[d]^{c} \\
& G^2Z.G^2X.G^2Y \ar[r]_{1.G_2} & G^2Z.G(GX.GY) 
}  
\\
\xymatrix{
GX.GY.GZ \ar[d]_{1.c} \ar[r]^{\delta.1.\delta} & G^2X.GY.G^2Z
\ar[dr]^{v.1} 
\ar[d]_{1.c}  \\
GX.GZ.GY \ar[d]_{c.1} & G^2.G^2Z.GY \ar[d]_{c.1} & G(GX.GY).G^2Z
\ar[dd]^{c} \\
GZ.GX.GY \ar[dr]_{\delta.\delta.\delta} \ar[r]_{\delta.\delta.1} & G^2Z.G^2X.GY \ar[dr]_{1.v} \\
& G^2Z.G^2X.G^2Y \ar[r]_{1.G_2}
& G^2Z.G(GX.GY)  
}   
\end{align*}
and the other is similar.
\endproof

Combining the above result with Proposition~\ref{prop:special} we obtain

\begin{theorem}\label{thm:C^G-C[G]}
Let  $G$ be a monoidal comonad on a monoidal category
$\cc$ having the property of Remark~\ref{rmk:Igenerator}: 
\begin{itemize}
\item 
for any two objects $X$ and $A$ the maps $x1\colon  IA \to XA$, where $x\colon I\to X$, are jointly epimorphic.  
\end{itemize}
Then there is a bijection between
\begin{enumerate}
\item braidings on $G$,
\item braidings on the monoidal category $\cc^G$ of coalgebras, and
\item braidings on the cowarped skew monoidal category $\cc[G]$.
\end{enumerate}
\end{theorem}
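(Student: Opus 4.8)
The plan is to assemble the desired three-way bijection directly from the two results that precede it, so that Theorem~\ref{thm:C^G-C[G]} becomes essentially a bookkeeping statement. The intermediate object in the chain of bijections is \emph{braidings on the monoidal comonad $G$}, which by definition is just a braiding on the associated skew cowarping $Q$ with $QX=GX$ and $K=I$. Theorem~\ref{thm:eilenbergMoore} already supplies a bijection between braidings on $G$ and braidings on the monoidal category $\cc^G$; this handles the equivalence of (1) and (2) and uses nothing about the joint-epimorphism hypothesis. It remains only to connect (1) with (3).

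For the equivalence of (1) and (3), I would invoke Proposition~\ref{prop:special}, whose hypothesis is exactly the property stated in Remark~\ref{rmk:Igenerator}: under that hypothesis, the construction of Proposition~\ref{prop:braided-cowarping} gives a bijection between braidings on the cowarped skew monoidal category $\cc[Q]$ and braidings on the skew cowarping $Q$. Specialising $Q$ to the skew cowarping arising from the monoidal comonad $G$ (so that $\cc[Q]=\cc[G]$, as recorded just before Theorem~\ref{thm:eilenbergMoore}), and recalling that a braiding on $Q$ is by definition a braiding on $G$, this is precisely a bijection between (1) and (3). Composing this with the bijection of Theorem~\ref{thm:eilenbergMoore} yields the full three-way correspondence.

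Since everything is already in place, the proof is genuinely short: state that braidings on $G$ mediate between the other two notions, cite Theorem~\ref{thm:eilenbergMoore} for the link to $\cc^G$, and cite Proposition~\ref{prop:special} (noting that the hypothesis on $\cc$ is exactly what is assumed) for the link to $\cc[G]$. The only point requiring a word of care is the bookkeeping identification that the skew cowarping attached to $G$ really does have $K=I$ and really does produce $\cc[G]$, so that Proposition~\ref{prop:special} applies verbatim; this was established in the discussion following Theorem~\ref{thm:eilenbergMoore} (citing \cite[Proposition~3.5]{skew}). There is no substantive obstacle here — the content lies entirely in the two earlier results being combined.

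\proof
By definition a braiding on the monoidal comonad $G$ is a braiding on the associated skew cowarping, which (by the discussion above, using \cite[Proposition~3.5]{skew}) is the skew cowarping $Q$ with $QX=GX$, $K=I$, and cowarped skew monoidal category $\cc[Q]=\cc[G]$. Theorem~\ref{thm:eilenbergMoore} gives a bijection between braidings on $G$ and braidings on the monoidal category $\cc^G$, establishing the equivalence of (1) and (2). Since $\cc$ has the property of Remark~\ref{rmk:Igenerator}, Proposition~\ref{prop:special} applies to $Q$ and gives a bijection between braidings on $\cc[G]=\cc[Q]$ and braidings on $Q$, that is, braidings on $G$; this establishes the equivalence of (1) and (3). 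Composing these bijections yields the asserted three-way correspondence.
\endproof
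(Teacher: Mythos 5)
Your proposal is correct and matches the paper exactly: the theorem is presented there as an immediate combination of Theorem~\ref{thm:eilenbergMoore} (linking braidings on $G$ with braidings on $\cc^G$) and Proposition~\ref{prop:special} (linking braidings on $Q$, i.e.\ on $G$, with braidings on $\cc[G]$ under the joint-epimorphism hypothesis). Your extra bookkeeping remarks about $K=I$ and $\cc[Q]=\cc[G]$ are accurate and already implicit in the paper's setup.
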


\subsection{The case of bialgebras}
Let \cv be a symmetric monoidal category, and $B$ a bialgebra in
\cv. The coalgebra structure of $B$ induces a comonad $G$ on \cv given
by tensoring on the left with $B$; the algebra structure comprising $\mu\colon
BB\to B$ and $\eta\colon I\to B$ makes this
into a monoidal comonad with structure maps
\[ \xymatrix @R0pc {
{} \llap{$GX.GY=$}   BXBY \ar[r]^{1c1} & BBXY \ar[r]^{\mu11} & BXY \rlap{$=G(XY)$} \\
    I \ar[rr]^{\eta1} && BI } \]
where $c\colon XB\to BX$ is the symmetry isomorphism.

In this setting we write $\cv[B]$ for the cowarped skew monoidal
structure which has tensor product $X \star Y = XBY$ and unit $I$.
The associator and unit
maps for $\cv[B]$ are given in the string diagrams below

\centerline{
\begin{tikzpicture}[scale=1]  
  \path (4,0) node {}
  (0,3) node[name=Xu] {$X$}
  (0.6,3) node[name=B1u] {$B$}
  (1.2,3) node[name=Yu] {$Y$}
  (1.8,3) node[name=B2u] {$B$}
  (2.4,3) node[name=Zu] {$Z$}
  (0,0) node[name=Xd] {$X$}
  (0.6,0) node[name=B1d] {$B$}
  (1.2,0) node[name=Yd] {$Y$}
  (1.8,0) node[name=B2d] {$B$}
  (2.4,0) node[name=Zd] {$Z$}
  (0.6,0.8) node[style=empty, name=m] {}
  (1.8,2.2) node[style=empty, name=d] {};
  \draw[braid] (Xu) to (Xd); 
  \draw[braid, name path=Y] (Yu) to (Yd); 
  \draw[braid] (Zu) to (Zd); 
  \draw[braid] (m) to (B1d);
  \draw[braid] (B2u) to (d);
  \draw[braid] (B1u) to [out=270, in=135] (m);
  \draw[braid] (d) to [out=315, in=90] (B2d);
  \path[braid, name path=dm] (d) to [out=225, in=45] (m); 
  \fill[white,name intersections={of=Y and dm}] (intersection-1) circle(0.1); 
  \draw[braid] (d) to [out=225, in=45] (m); 
\end{tikzpicture}
\begin{tikzpicture}
  \path (2,0) node {}
    (0.6,3) node[name=Xu] {$X$}
    (0,1.5) node[name=eta]  {}
  (0.6,0) node[name=Xd] {$X$}
  (0,3) node[name=Bu] {$B$} ;
  \draw[braid] (Xu) to (Xd);
  \fill (eta) circle(0.1) ;
  \draw[braid] (eta) to (Bu);
\end{tikzpicture}
\begin{tikzpicture}
  \path (2,0) node {}
    (0,3) node[name=Xu] {$X$}
    (0.6,1.5) node[name=eta]  {}
  (0,0) node[name=Xd] {$X$}
  (0.6,0) node[name=Bd] {$B$} ;
  \draw[braid] (Xu) to (Xd);
  \fill (eta) circle(0.1) ;
  \draw[braid] (eta) to (Bd);
\end{tikzpicture}
}
\noindent when read moving down the page, with the multiplication
represented by the merging of two strings, the comultiplication by the
splitting of two strings, and the unit and counit by the appearance or
disappearance of a string at a dot. 

If $\cv$ is closed then the isomorphisms $\cv(XBY,Z) \cong \cv(X,[BY,Z])$ ensure
that $\cv[B]$ is too.  This is the case, for instance, if $\cv=\RMod$
for a commutative ring $R$. 

\begin{remark} \label{rmk:quasicommutative}
There is an evident natural isomorphism $GX.GY\cong GY.GX$ with
components the symmetry isomorphisms $ c\colon (BX)(BY)\cong (BY)(BX)$.
The diagrams~\eqref{eq:1}, \eqref{eq:1a}, and \eqref{eq:1b} always
commute, as does the symmetry axiom, but \eqref{eq:funny} commutes if
and only if the algebra $B$ is commutative.  For the warped skew
monoidal category $\cv[B]$ the natural isomorphisms $1.c\colon X.BY.BZ \cong X.BY.BZ$ of Proposition~\ref{prop:braided-cowarping} satisfy all of the equations for a braided skew monoidal category except for \eqref{eq:S*}, which commutes just when the algebra $B$ is commutative.

Of course there are many bialgebras whose underlying algebra is not commutative, for instance the group ring of $\mathbb Z[G]$ of a non-abelian group $G$.  Therefore the cowarped skew monoidal structure $\Ab[\mathbb Z[G]]$ exhibits the independence of \eqref{eq:S*} from the other axioms for a braiding.  Since this skew monoidal structure is closed, it follows from Theorem~\ref{thm:closed} that \eqref{eq:bourkeS*} is independent of the other axioms for a  braided/symmetric skew closed structure.
\end{remark}

\begin{theorem}\label{thm:bialgebras}
Let $R$ be a commutative ring and $B$ an $R$-bialgebra.  There
are bijections between
\begin{enumerate}
\item Cobraidings (coquasitriangular structures) on the bialgebra $B$;
\item Braidings on the skew monoidal category $\RMod[B]$;
\item Braidings on the monoidal category $\RMod^{B}$ of $B$-comodules.
\end{enumerate}
\end{theorem}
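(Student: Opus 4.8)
The plan is to deduce Theorem~\ref{thm:bialgebras} by applying Theorem~\ref{thm:C^G-C[G]} to the monoidal comonad $G = B\otimes -$ on $\cv = \RMod$, and then identifying each of the three resulting notions concretely. First I would check that the hypothesis of Theorem~\ref{thm:C^G-C[G]} (equivalently of Remark~\ref{rmk:Igenerator}) holds: $\RMod$ is closed and $\RMod(R,-)$ is faithful, being the identity functor on underlying sets up to isomorphism, so the maps $x1\colon MA\to XA$ for $x\colon R\to X$ are jointly epimorphic. This gives immediately a bijection between braidings on $G$, braidings on $\RMod^G$, and braidings on $\RMod[G]=\RMod[B]$. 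Since the Eilenberg--Moore category $\RMod^G$ of coalgebras for the comonad $B\otimes-$ is exactly the category $\RMod^B$ of $B$-comodules, items (2) and (3) of Theorem~\ref{thm:bialgebras} are already matched, with (3) here corresponding to the braidings on $\RMod^G$.

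The substantive content is therefore the identification of braidings on the monoidal comonad $G = B\otimes-$ with cobraidings on the bialgebra $B$. I would begin by recalling the definition of a cobraiding (coquasitriangular structure): an $R$-linear map $\sigma\colon B\otimes B\to R$ which is convolution-invertible and satisfies the three standard axioms expressing compatibility with the multiplication in each variable and with the unit \cite{Kassel-book,Street-book}. A braiding on $G$ is, by definition (Proposition~\ref{prop:braided-cowarping} and the surrounding discussion), a natural isomorphism $y\colon GX.GY\to GY.GX$, i.e.\ $y\colon BX\otimes BY\to BY\otimes BX$ natural in $X,Y$, satisfying \eqref{eq:1}, \eqref{eq:1a}, \eqref{eq:1b}, \eqref{eq:funny}. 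By naturality (the usual Yoneda-type argument, taking $X,Y$ to be free modules and tracking generators, or invoking that $G$ is a left adjoint so natural transformations out of $GX\otimes GY\cong G(X\otimes Y)\otimes\ldots$ are determined by their effect on $R$), such a $y$ is determined by a single $R$-linear map, and one computes that the most general natural candidate has the form
\[ \xymatrix{
BX\otimes BY \ar[r]^-{1c1} & BBXY \ar[r]^-{\sigma 11} & XY \cong \ldots } \]
leading, after reintroducing the appropriate copies of $B$ via the comultiplication, to $y$ expressed through a map $\sigma\colon B\otimes B\to R$. Concretely, writing elements suggestively, $y(b\otimes x\otimes b'\otimes y') = \sum \sigma(b_{(1)}\otimes b'_{(1)})\, b'_{(2)}\otimes x\otimes b_{(2)}\otimes y'$, which is the standard formula for the braiding on a comodule category induced by a cobraiding.

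Then I would translate the four axioms one by one. Invertibility of $y$ corresponds precisely to convolution-invertibility of $\sigma$. Axiom \eqref{eq:1}, the Yang--Baxter equation for $y$, is a consequence of the cobraiding axioms (this is the coalgebra-dual of the fact that an $R$-matrix satisfies YBE, and in fact, once \eqref{eq:1a} and \eqref{eq:1b} hold, Proposition~\ref{prop:monoidal}'s citation of \cite[Proposition~1.2]{JoyalStreet-braided} shows \eqref{eq:1} follows automatically). Axioms \eqref{eq:1a} and \eqref{eq:1b} correspond to the two multiplicativity axioms of a cobraiding, one for each tensor slot of $B$; and the unit-type consequence (Propositions~\ref{prop:s.r}, \ref{prop:s.r1}) corresponds to the counit axiom $\sigma(b\otimes 1)=\varepsilon(b)=\sigma(1\otimes b)$. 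Finally \eqref{eq:funny}, the axiom that $y$ is a $G$-coalgebra homomorphism (shown in the proof of Theorem~\ref{thm:eilenbergMoore} to be equivalent, in the cofree case, to \eqref{eq:y.delta} together with compatibility with $v$), corresponds to compatibility of $\sigma$ with the comultiplication of $B$ --- but since $\sigma$ lands in $R$ and $B$ is a coalgebra, this is automatic, matching the fact that \emph{every} cobraiding yields a braiding on comodules (there is no ``funny'' condition on the bialgebra side). I expect the main obstacle to be the bookkeeping in the identification step: pinning down that the general natural $y$ has exactly the claimed form, and then matching the diagrammatic axioms \eqref{eq:1a}--\eqref{eq:funny} against the algebraic cobraiding axioms without sign or variance errors. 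This is a routine but lengthy Sweedler-notation computation; once it is done, the bijectivity is transparent because each translation is manifestly reversible, and the full theorem follows by composing with Theorem~\ref{thm:C^G-C[G]}.
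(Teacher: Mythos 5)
Your overall architecture matches the paper for the (2)$\leftrightarrow$(3) part: verify the hypothesis of Remark~\ref{rmk:Igenerator} for $\RMod$ and apply Theorem~\ref{thm:C^G-C[G]}. Where you diverge is in handling (1): the paper simply cites the classical bijection between cobraidings on $B$ and braidings on the comodule category $\RMod^{B}$ (\cite[Proposition~15.2]{Street-book}), whereas you propose to re-derive it by directly matching cobraidings with braidings on the comonad $G=B\otimes-$. That route is viable in principle, but your sketch of it contains two genuine errors.

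First, naturality alone does not force $y\colon BX\otimes BY\to BY\otimes BX$ to be induced by a functional $\sigma\colon B\otimes B\to R$. By Yoneda (evaluate at $X=Y=R$), a natural $y$ corresponds to an arbitrary $R$-linear map $\tau\colon B\otimes B\to B\otimes B$; the special form $\tau(b\otimes b')=\sum\sigma(b_{(1)}\otimes b'_{(1)})\,b'_{(2)}\otimes b_{(2)}$ has to be extracted from the axioms (essentially the unit/counit consequences of Propositions~\ref{prop:s.r} and~\ref{prop:s.r1} let you define $\sigma=(\epsilon\otimes\epsilon)\tau$ and then recover $\tau$ from $\sigma$), not from naturality. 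Second, and more seriously, your dictionary assigns no cobraiding axiom to \eqref{eq:funny} and declares it automatic. It is not: \eqref{eq:funny} is exactly the condition that $y$ be a morphism of $B$-comodules, and it translates to the quasi-cocommutativity axiom $\sum\sigma(a_{(1)}\otimes b_{(1)})\,a_{(2)}b_{(2)}=\sum b_{(1)}a_{(1)}\,\sigma(a_{(2)}\otimes b_{(2)})$ --- a genuine third axiom of a cobraiding alongside the two multiplicativity conditions. The paper's Remark~\ref{rmk:quasicommutative} makes this point explicitly: for $\sigma=\epsilon\otimes\epsilon$ the conditions you assign to \eqref{eq:1a} and \eqref{eq:1b} always hold, while \eqref{eq:funny} holds only when $B$ is commutative. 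As written, your translation would therefore put cobraidings in bijection with a strictly larger class. Fixing both points yields a correct (if laborious) proof; the paper avoids the labour by outsourcing this step to the literature.
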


\proof
The monoidal category $\RMod$ satisfies the assumptions of
Remark~\ref{rmk:Igenerator}, and so we may use
Theorem~\ref{thm:C^G-C[G]} to deduce the bijection between (2) and (3).
The bijection betwen (1) and (3) is well-known, and can be found for
example in \cite[Proposition~15.2]{Street-book}, on taking $\cv=\RMod\op$.
\endproof

Explicitly, given a braiding $s$ on the skew monoidal category
$\RMod[B]$, the isomorphism $s\colon (I*I)*I\cong (I*I)*I$ amounts
(modulo unit isomorphisms) to an isomorphism $BB\cong BB$, and
composing with $\epsilon\epsilon\colon BB\to I$ gives the corresponding
coquasitriangular structure. Conversely, given coquasitriangular
structure $\sigma\colon BB\to I$, the braiding $(X*Y)*Z\cong (X*Z)*Y$ 
corresponds to the map $XBYBZ\to XBZBY$ given in the string diagram
below. 
\centerline{
\begin{tikzpicture}[scale=1]  
  \path (0,3) node[name=Xu] {$X$}
  (1,3) node[name=B1u] {$B$}
  (2,3) node[name=Yu] {$Y$}
  (3,3) node[name=B2u] {$B$}
  (4,3) node[name=Zu] {$Z$}
  (0,0) node[name=Xd] {$X$}
  (1,0) node[name=B1d] {$B$}
  (2,0) node[name=Zd] {$Z$}
  (3,0) node[name=B2d] {$B$}
  (4,0) node[name=Yd] {$Y$}
  (1,1) node[arr,name=sig] {$\sigma$}
  (1,2.2) node[style=empty, name=d1] {}
  (3,2.2) node[style=empty, name=d2] {};
  \draw[braid] (Xu) to (Xd);
  \path[braid, name path=Y] (Yu) to[out=270, in=90] (Yd); 
  \draw[braid, name path=Z] (Zu) to (4,2.2) to[out=270, in=90] (Zd); 
  \draw[braid] (B1u) to (d1);
  \draw[braid] (B2u) to (d2);
  \draw[braid] (d1) to [out=225, in=135] (sig);
  \path[braid, name path=B1r] (d1) to [out=315, in=90] (B2d); 
  \draw[braid, name path=B2l] (d2) to [out=225, in=45] (sig);
  \draw[braid, name path=B2r] (d2) to [out=315, in=90] (B1d);
  \fill[white,name intersections={of=Y and Z}] (intersection-1) circle(0.1); 
  \fill[white,name intersections={of=Y and B2l}] (intersection-1) circle(0.1); 
  \fill[white,name intersections={of=Y and B2r}] (intersection-1) circle(0.1); 
  \fill[white,name intersections={of=B1r and Z}] (intersection-1) circle(0.1); 
  \fill[white,name intersections={of=B1r and B2l}] (intersection-1) circle(0.1); 
  \fill[white,name intersections={of=B1r and B2r}] (intersection-1) circle(0.1); 
  \draw[braid] (Yu) to[out=270, in=90] (Yd);
  \draw[braid] (d1) to [out=315, in=90] (B2d); 
\end{tikzpicture}
}

\subsection{Duality} 
One can now consider what our results give under the various duality
principles described in Remark~\ref{rmk:duals}. Of particular interest
is $\cc\op$: skew cowarpings on $\cc\op$ are a reverse version of the
skew warpings of \cite{skew}, and any opmonoidal monad $(T,\mu,\eta)$
 on \cc 
gives rise to one. There is a resulting ``warped'' right skew monoidal
category $\cc[T]$ with tensor $X*Y=X.TY$.

It follows formally from
Theorem~\ref{thm:eilenbergMoore} that braidings on the Eilenberg-Moore
category $\cc^T$, with the lifted monoidal structure, are in bijection
with braidings on the opmonoidal monad $T$.

We could also apply Theorem~\ref{thm:C^G-C[G]} to $\cc\op$, but this
is not so interesting, since for the typical choices of $\cc$ the
property of Remark~\ref{rmk:Igenerator} is less likely to
hold for
$\cc\op$. But in fact it is not hard to see that 
Proposition~\ref{prop:special} and Theorem~\ref{thm:C^G-C[G]}
hold for \cc provided that {\em either $\cc$ or $\cc\op$} has
the property of Remark~\ref{rmk:Igenerator}. Thus if \cc has the
property of Remark~\ref{rmk:Igenerator}, then braidings on $\cc^T$
correspond to braidings on the right skew monoidal category $\cc[T]$.

The analogue of Theorem~\ref{thm:bialgebras} then says that for a
commutative ring $R$ and $R$-bialgebra $B$ there are bijections
between:
\begin{enumerate}
\item braidings (quasitriangular structures) on $B$;
\item braidings on the right skew monoidal category $\RMod[B]$;
  \item braidings on the monoidal category $\BMod$ of $B$-modules
\end{enumerate}
and so in particular there is a ``trivial'' braiding, lifted
from the base braided monoidal category, whenever
the bialgebra $B$ is cocommutative.

More generally, consider a bialgebroid $B$ over a (not
necessarily commutative) ring $R$; this amounts to a cocontinuous
opmonoidal monad $T$ on the monoidal category $R\textbf{-Mod-}R$ of
$R$-bimodules. The opmonoidal structure gives rise to a lifted
monoidal structure on the Eilenberg-Moore category; this is just the
category of $B$-modules. The base monoidal category
$R\textbf{-Mod-}R$ is of course not braided. Nonetheless, we have a
bijection between braidings on the Eilenberg-Moore category and
braidings on $T$, giving rise to a notion of braided bialgebroid;
compare this with the quasitriangular structures of \cite{DuninMudrov}. 

\subsection{Skew semiwarpings}
\label{section:non-unital}
~
For $\cc$ skew monoidal, the tensor product $A \ast B = (IA)B$ does
not, in general, form part of a skew monoidal structure on $\cc$ ---
but is \emph{skew semimonoidal} in the sense of \cite[Section~7]{skewcoherence}.  A braiding on $\cc$ then yields natural isomorphisms $A \ast B \to B \ast A$ satisfying the classical braid axioms.  We establish these results in the present section, and will employ them in Section~\ref{sect:bicat} to construct symmetric monoidal bicategories.

We start by considering a generalization of the skew warpings of
\cite{skew} in which all structure involving the unit is omitted. A
{\em skew semiwarping} on a skew monoidal category \cc will be a
functor $T\colon \cc\to\cc$ equipped with a natural transformation
$v\colon T(TA.B)\to TA.TB$ making the diagram
\begin{equation}
  \label{eq:warping}
  \xymatrix{
    T(T(TA.B).C) \ar[r]^-{T(v.1)} \ar[d]_{v} & T((TA.TB).C)
    \ar[r]^-{Ta} & T(TA.(TB.C)) \ar[r]^-{v} & TA.T(TB.C) \ar[d]^{1.v}
    \\
    T(TA.B).TC \ar[r]_-{v.1} & (TA.TB).TC \ar[rr]_-{a} && TA.(TB.TC)
  }
\end{equation}
commute. Just as in \cite[Section~3]{skew}, we may define a ``warped''
tensor product $A\ast B=TA.B$ and the composite
\[ \xymatrix{
    T(TA.B).C \ar[r]^{v.1} & (TA.TB).C \ar[r]^{a} & TA.(TB.C) } \]
defines a morphism $
\alpha\colon (A\ast B)\ast C\to A\ast (B\ast C)$ 
satisfying the pentagon equation. Thus $\cc$ becomes a {\em skew
  semimonoidal category} in the sense of \cite[Section~7]{skewcoherence} with
respect to this warped structure.

We define an {\em augmentation} on
such a skew semiwarping to be a natural transformation
$\epsilon\colon T\to 1$ making the following diagram commute
\[ \xymatrix{
    T(TA.B) \ar[r]^-v \ar[dr]_{\epsilon} & TA.TB \ar[d]^{1.\epsilon}
    \\
    & TA.B } \]
For such an augmentation $\epsilon$, the diagram
\[ \xymatrix{
    (A\ast B)\ast C \ar@{=}[r] \ar[dd]_{\alpha} & T(TA.B).C \ar[r]^-{T(\epsilon.1).1} \ar[d]^{v.1}
    \ar[dr]^{\epsilon.1} & T(A.B).C
    \ar[r]^{\epsilon.1} & (A.B).C \ar[dd]^{a} \\
    & (TA.TB).C \ar[d]^{a} \ar[r]^-{(1.\epsilon).1} & (TA.B).C \ar[d]^a
    \ar[ur]_{(\epsilon.1).1}  \\
  A\ast(B\ast C) \ar@{=}[r] & TA.(TB.C) \ar[r]_-{1.(\epsilon.1)} & TA.(B.C) \ar[r]_-{\epsilon.1} &
A.(B.C) } \]
commutes, and so $\epsilon.1\colon TA.B\to A.B$ is compatible with the
associativity maps $\alpha$ and $a$; in other words, it makes the
identity functor $1\colon \cc\to\cc$ into a semimonoidal functor from
underlying skew semimonoidal category of the original skew monoidal
\cc, to the warped skew semimonoidal category. 

\begin{proposition}
  For a skew monoidal \cc with $T\colon \cc\to \cc$ defined by
  tensoring on the left with $I$, the maps
  \[ \xymatrix{
      I(IA.B) \ar[r]^-{\ell} & IA.B \ar[r]^-{r.1} & (IA)I.B
      \ar[r]^-{a} & IA.IB } \]
  define a skew semiwarping $v$, for  which $\ell\colon IA\to A$ is an augmentation.
  \end{proposition}

\proof
First we verify that $v$ is a skew semiwarping. Observe that $v$ is the instance $X=IA$ of the natural
transformation $w\colon I(X.B)\to X.IB$ given by 
\[ \xymatrix{
I(X.B) \ar[r]^-{\ell} & X.B \ar[r]^-{r.1} & XI.B \ar[r]^-{a} & X.IB
} \]
and therefore that the left hand square in 
\[ \xymatrix{
    I(I(IA.B).C) \ar[r]^-{I(v.1)} \ar[d]_v & I((IA.IB).C) \ar[d]^{v}
    \ar[r]^-{1a} & I(IA.(IB.C)) \ar[r]^-{v} & IA.I(IB.C) \ar[d]^{1.v} 
    \\
    I(IA.B).IC \ar[r]_-{v.1} & (IA.IB).IC \ar[rr]_-{a} && IA.(IB.IC)  } \]
commutes by naturality of $u$; thus it remains to show the
commutativity of the region on the right. This in turn follows by
commutativity of
\[ \xymatrix{
   I(IA.(IB.C)) \ar@/^1pc/[rrr]^{v}
    \ar[r]_-{\ell} & IA.(IB.C) \ar[r]^-{r1} \ar[drr]^1 & (IA)I.(IB.C) \ar[r]_-{a}
    & IA.I(IB.C) \ar[d]_{1.\ell} \ar@<2pc>@/^1pc/[ddd]^{1.v} \\ 
     I((IA.IB). C) \ar[u]^-{1a} \ar[r]^-{\ell} \ar[drrr]_v &  (IA.IB).C
     \ar[rr]^-{a}  && IA.(IB.C) \ar[d]_{1.(r.1)} \\
  &&& IA.((IB)I.C) \ar[d]_{1.a} \\
  &&& IA.(IB.IC). } \]
As for the augmentation property, this holds by commutativity of
\[ \xymatrix{
    I(IA.B) \ar[r]_-{\ell} \ar@/^1pc/[rrr]^v & IA.B \ar[r]_-{r.1}
    \ar@/_1pc/[drr]_1 & (IA)I.B \ar[r]_-a &
    IA.IB \ar[d]^{1.\ell} \\
    &&& IA.B. } \]
\endproof

If now $\cc$ is braided, the natural isomorphisms $s\colon (IA)B\to (IB)A$ can
be seen as natural isomorphisms $c\colon A\ast B\to B\ast A$. We close
this section with a result indicating the sense in which these
$c$ deserve to be thought of as a braiding.

\begin{proposition}\label{prop:2-variable}
  The natural isomorphism $c\colon A\ast B\to B\ast A$ and its inverse
  both satisfy the axiom for a braiding asserting the commutativity of
  \[ \xymatrix{
  (A\ast B)\ast C \ar[r]^-{c\ast 1} \ar[d]_{\alpha} & (B\ast A)\ast C \ar[r]^-{\alpha} &
  B\ast (A\ast C) \ar[d]^{1\ast c} \\
  A\ast (B\ast C) \ar[r]_{c} & (B\ast C)\ast A \ar[r]_-{\alpha} & B\ast (C\ast A). } \]
  If $c$ is a symmetry then the equation
  \[ \xymatrix{
  A \ast B \ar@/_1pc/[rr]_{1}  \ar[r]^{c_{A,B}} & B \ast A \ar[r]^{c_{A,B}} & A \ast B}
\]
also holds.
\end{proposition}

\proof
The second sentence is immediate. The first holds by commutativity of the following diagram
\[ \xymatrix{
    I(IX.Y).Z \ar[r]^{\ell.1} \ar[ddd]_{1s.1} & (IX.Y).Z
    \ar[r]^{(r1)1} \ar[dd]^1
    \ar[dr]^{(r1.1)1}  & ((IX)I.Y).Z
    \ar[r]^{a1} \ar[d]^{(s1)1} & (IX.IY).Z \ar[r]^{a} \ar[dd]^{s1} & IX.(IY.Z) \ar[ddd]^{s} \\
    && ((II)X.Y).Z \ar[d]^{s1} \ar[dl]_{(\ell1.1)1}& &\\
    & (IX.Y)Z \ar[d]^{s1}  & ((II)Y.X).Z \ar[r]^-{a1.1}  \ar[dl]_{(\ell1.1)1}&  (I(IY).X).Z \ar[d]^s \ar[dll]^{\ell1.1}
    & \\
     I(IY.X).Z  \ar[r]^{\ell.1}   & (IY.X)Z  \ar[d]^{(r1)1}  \ar[drr]^s
    &    & (I(IY).Z).X \ar[r]^{a1} \ar[d]^{\ell1.1} &  I(IY.Z).X
    \ar[dl]^{\ell1}\\
    & ((IY)I.X)Z \ar[r]^{s} \ar[d]^{a1} & ((IY)I.Z).X \ar[d]^{a1}& (IY.Z).X \ar[l]^{(r1)1} \\
 & (IY.IX)Z \ar[d]^a & (IY.IZ).X \ar[d]^{a} \\
  & IY.(IX)Z \ar[r]_{1s} & IY.(IZ.X) } \]
and the fact that $s^{-1}$ is also a braiding on the skew monoidal
category \cc. \endproof

\section{Braided skew multicategories}
\label{sect:multicat}

In the earlier paper \cite{bourke-lack}, we defined a notion of {\em
  skew multicategory}, and showed that skew monoidal categories could
be characterized as skew multicategories satisfying a condition we
called {\em left representability}. This is a skew analogue of the
relationship between monoidal categories and representable
multicategories. 

In this section we build on the ideas of \cite{bourke-lack}, 
defining a notion of braiding on a skew multicategory, and showing
that, in the left representable case, this is equivalent to a braiding
on the corresponding skew monoidal category.

We begin by revisiting the notion of skew multicategory defined in
\cite{bourke-lack}, to which we refer for further detail.

\subsection{Skew multicategories}

A {\em skew multicategory} $\bba$ consists  of a
multicategory $\bba_\ell$ together with extra structure which we shall
shortly describe. We generally write $A$ for the set of objects, and
$\overline{a}$ to denote a list $a_1,\ldots,a_n$ of objects, so that
we may write $\bba_\ell(\overline{a};b)$ for the multihom. The
elements of such a multihom are called ``loose multimaps''. We now
turn to the extra structure. This consists of:
\begin{itemize}
  \item
for each {\em non-empty} such list $\overline{a}$ and each $b\in A$, there is a set
$\bba_t(\overline{a};b)$ ``of tight multimaps'', with a function
$j\colon \bba_t(\overline{a};b)\to \bba_\ell(\overline{a};b)$
\item
  for each $a\in A$ there is an element $1_a\in \bba_t(a;a)$ which is
  sent by $j$ to the corresponding identity of $\bba_\ell$
\item
  substitution maps
  \[ \bba_t(b_1,\ldots,b_n;c) \x \bba_t(\overline{a}_1;b_1) \x \prod\limits^n_{i=2} 
\bba_{\ell}(\overline{a}_i;b_i) \to
\bba_t(\overline{a}_1,\ldots,\overline{a}_n;c) \]
whose effect we denote by $(g,f_1,\ldots,f_n)\mapsto g(f_1,\ldots,f_n)$
\end{itemize}
subject to the evident associativity and identity axioms, along with
the requirement that $j$ respect substitution. There is an induced
category \ca of tight unary maps with the same objects as $\bba$ and
with $\ca(a,b)=\bba_t(a;b)$.

For a discussion of various possible reformulations of the notion of
skew multicategory,
see \cite [Sections~3--4]{bourke-lack}.
 
\begin{remark}\label{rk:sub}
In practice, many examples of skew multicategories have the property
that the functions $j\colon \bba_t(\overline{a};b)\to \bba_\ell(\overline{a};b)$ are subset inclusions.  Such
skew multicategories amount to \emph{ordinary multicategories}
equipped with a \emph{subcollection of tight multimaps} which are
non-nullary, contain the identities, and with 
$g(f_{1},\ldots,f_{n})$ tight just when both $g$ and $f_{1}$
are. 
\end{remark}

For the moment we content ourselves with a single example; more will
be given in Section~\ref{sect:2cat} below.

\begin{example}\label{ex:FP}
There is a multicategory $\mathbb {FP}$ of categories equipped with a choice of finite products and whose multimaps
are functors $F\colon \ca_{1} \times \ldots \times \ca_{n} \to \cb$
preserving products in each variable in the usual up to isomorphism
sense.  A nullary map, an element of $\mathbb {FP}( ~;\ca)$, is an object of $\ca$.  Declaring a multimap to be tight just when it preserves products \emph{strictly in the first variable} equips $\mathbb{FP}$ with the structure of a skew multicategory.  
\end{example}

\subsection{Skew monoidal categories arising from left representable
  skew multicategories}

For a skew multicategory $\bba$ and a list $\overline{a}$ of objects,
there are induced functors $\bba_t(\overline{a};-)$ and
$\bba_\ell(\overline{a};-)$ from $\ca$ to $\Set$ (with the former
defined only if $\overline{a}$ is non-empty).
We say that $\bba$ is {\em weakly representable} if these functors
$\bba_\ell(\overline{a};-)$ and $\bba_t(\overline{a};-)$ are representable.
This says that, for $x\in\{t,\ell\}$, there are objects
$m_x\overline{a}$ and multimaps $\theta_{x}(\overline{a}) \in \mathbb
A_{x}(\overline{a};m_{x}\overline{a})$ with the universal property
that the induced functions
\[- \circ_{1} \theta_{x}(\overline{a})\colon\mathbb
  A_{t}(m_{x}\overline{a};b) \to  \bba_{x}(\overline{a};b)\]
  are bijections for all $b \in A$.

If now $\overline{b}$ is a list and $c$ an object, the multimap
$\theta_x(\overline{a})$ induces a function 
\begin{equation}\label{eq:left}
- \circ_{1} \theta_{x}(\overline{a})\colon \mathbb
A_{\myt}(m_{x}\overline{a},\overline{b};c) \to
\bba_{x}(\overline{a},\overline{b};c)
\end{equation}
and the weakly representable $\mathbb A$ is said to be \emph{left representable} if the function
\eqref{eq:left} is a bijection for all $x,\overline{a},\overline{b}$ and $c$,
and all universal multimaps $\theta_x(\overline{a})$.

Theorem 6.1 of \cite{bourke-lack} asserts that there is a 2-equivalence between the 2-categories of left representable skew multicategories and of skew monoidal categories.  We now describe the skew monoidal structure on $\ca$ associated to the left representable $\bba$.

Setting $AB=m_{t}(A,B)$ gives the defining representation 
\begin{equation}\label{eq:tensor}
\ca(AB,C) \cong \bba_{t}(A,B;C)
\end{equation}
with universal multimap denoted $e_{A,B} \in  \bba_{t}(A,B;AB)$.  We sometimes write it as
$$e\colon A,B \to AB$$
omitting the subscript.

Given $f\colon A \to C$ and $g\colon B \to D$ in $\ca$, the
morphism $fg\colon AB \to CD$ is the unique one such that
\begin{equation}\label{eq:tensoring}
fg \circ e_{A,B} = e_{C,D}(f,g) 
\end{equation}
which condition we also write as 
\begin{equation} 
\xymatrix{
A,B \ar[d]_{e} \ar[rr]^{f,g} && C,D \ar[d]^{e} \\
AB \ar[rr]_{fg} && CD .
}
\end{equation}
Functoriality follows from the universal property.

By left representability we have natural isomorphisms $\ca((AB)C,D)
\cong \bba_{t}(AB,C;D) \cong \bba_{t}(A,B,C;D)$.  The universal
multimap is the composite $e_{AB,C} \circ_1 e_{A,B} \in
\bba_{t}(A,B,C;(AB)C)$, which we may represent as 
\begin{equation}
\xymatrix{
A,B,C \ar[r]^{e,1} & AB,C \ar[r]^{e} & (AB)C.
}
\end{equation}
By its universal property we obtain the associator $a_{A,B,C}\colon
(AB)C \to A(BC)$ as   the unique map such that
\begin{equation}\label{eq:associator}
 a_{A,B,C} \circ_{1} e_{AB,C} \circ_{1} e_{A,B} = e_{A,BC} \circ_1 e_{B,C}
\end{equation}
or equally
\begin{equation}
\xymatrix{
A,B,C \ar[d]_{e} \ar[r]^{e,1} & AB,C \ar[r]^{e} & (AB)C \ar[d]^{a} \\
A,BC \ar[rr]^{e} && A(BC). 
}
\end{equation}

We define  the unit $I$   as the representing object
for $\bba_{l}(;-)\colon \ca \to \Set$. We write
$u \in \bba_{l}(;-)$ for the universal multimap, and depict it as
$u\colon (-) \to I$. By left representability we have $\ca(
IA ,B) \cong \bba_{t}(I,A;B) \cong \bba_{l}(A;B)$.  Taking $B=A$
and the image of the identity $1_{a}$ under $\bba_{t}(A;A) \to
\bba_{l}(A;A)$ yields $\ell\colon IA \to A$, the unique map such that
 \begin{equation*}
 \xymatrix{A \ar@/_1.5pc/[rrr]_{j1_{A}} \ar[r]^-{u,1} & I,A \ar[r]^{e} & IA \ar[r]^{l} & A}
 \end{equation*}
 commutes.  The right unit map $r\colon A \to AI$ is the composite
 below:
   \begin{equation*}
 \xymatrix{A \ar[r]^-{1,u} & A,I \ar[r]^-{e} & AI . }
 \end{equation*}

  \subsection{Braided skew multicategories}
 
 We begin by recalling braided multicategories. These differ from the usual notion of symmetric multicategory in that they involve actions of the braid groups $\cb_{n}$ rather than the symmetric groups $\cs_{n}$. 
 
\subsubsection*{Braid groups and symmetric groups}
 
 Recall that the Artin braid group $\cb_{n}$ has presentation
 $$\langle \beta_{1},\ldots,\beta_{n-1} | \beta_{i}\beta_{j}=\beta_{j}\beta_{i} \textnormal{ for } j < i-1, \beta_{i}\beta_{i+1}\beta_{i+1}=\beta_{i}\beta_{i+1}\beta_{i}\rangle .$$
There is an evident homomorphism $|-|_{n}\colon \cb_n \to \cs_n$ sending $\beta_i$ to the transpostion $(i,i+1)$ so that, in particular, $\cb_n$ acts on $\{1,\ldots,n\}$.

In addition to the group operation, one can form the tensor product of braids. 
Combining this with the group operations, the sets $\cb_n$
admit an evident substitution $$\cb_n \times \cb_{m_{1}} \times \ldots
\cb_{m_{n}} \to \cb_{m_{1}+\ldots + m_{n}}\colon
(s,(t_{1},\ldots,t_{n})) \mapsto s(t_{1},\ldots,t_{n})$$ which,
indeed, form the substitution for an operad $\cb$, and the
functions $|-|_n\colon \cb_n\to\cs_n$ define an operad morphism from
$\cb$ to the corresponding operad \cs.

\subsubsection*{Braided multicategories}
A \emph{braiding} on a multicategory $\bba$ consists of 
\begin{enumerate}
\item for each $s \in \cb_{n}$ a function
  \[ s^*\colon \bba(a_{1},\ldots,a_{n};b) \to
    \bba(a_{s1},\ldots,a_{sn};b) \colon f\mapsto fs \]
   satisfying  the action equations  $(fs)t = f(st)$
  and $f1_{\cb_n}=f$ as well as 
\item the equivariance equation
 \begin{equation*}\label{eq:equivariance}
(f(g_{1},\ldots,g_{n}))s(t_{1},\ldots,t_{n}) = fs(g_{s_{1}}t_{s_{1}},\ldots,g_{s_{n}}t_{s_{n}})
\end{equation*}
 for all $f \in \bba(b_1,\ldots,b_{n};c)$ and $s \in \cb_n$, together with $g_{i} \in \bba(\overline{a_{i}};b_i)$ and $t_{i} \in \cb_{|\overline{a_{i}}|}$ for $i \in \{1,\ldots,n\}$.
\end{enumerate}

The braiding is a symmetry if the actions  satisfy $s^*= t^*$ whenever
$|s|=|t|$ 
Alternatively, and more simply, modify the definition above by replacing each occurence of  $\cb$ by $\cs$.

 For concrete calculations it may be useful to reformulate this
structure in terms of the generating braids. To give bijections
satisfying (1) is to give bijections $f\mapsto f\beta_i$ for each
$i$, subject to the braid relations
$(f\beta_i)\beta_j=(f\beta_j)\beta_i$ for $j<i-1$ and
$((f\beta_i)\beta_{i+1})\beta_i =
((f\beta_{i+1})\beta_i)\beta_{i+1}$.
The equivariance conditions become more complicated. Given $f\in
\bba(a_1,\ldots,a_n;b_i)$ and $g\in \bba(b_1,\ldots,b_m;c)$ they say
\begin{equation}\label{eq:58}
  g\circ_i f\beta_j = (g\circ_i f)\beta_{i+j-1}
  \end{equation}
  and
  \begin{equation}\label{eq:b-e}
 g\beta_j\circ_i f =
  \begin{cases}
    (g\circ_i f)\beta_j & \text{if $j<i-1$} \\
    (g\circ_{i-1}f)\beta_{j+n-1}\ldots \beta_{j} & \text{if $j=i-1$} \\
    (g\circ_{i+1}f)\beta_j\ldots\beta_{j+n-1} & \text{if $j=i$} \\
    (g\circ_i f)\beta_{j+n-1} & \text{if $j>i$.}
  \end{cases}
\end{equation}

\subsubsection*{Braided skew multicategories}

Now let $\bba$ be a skew multicategory.  For a braiding on $\bba$ we
require, to begin with, that the ordinary multicategory $\bba_l$ of
loose multimaps be equipped with actions $$s^*\colon\bba_{l}(a_{1},\ldots,a_{n};b) \to \bba_{l}(a_{s1},\ldots,a_{sn};b)$$ exhibiting it as a braided multicategory.

Consider the subgroup $\cb^1_{n} = \langle \beta_{2},\ldots,\beta_{n} \rangle \leq \cb_{n}$; that is, we omit the single generator having a non-trivial action on $1 \in \{1,\ldots,n\}$.  Of course $\cb^1_n \cong \cb_{n-1}$.  Observe also that $s(t_{1},\ldots,t_{n}) \in\cb^1_{m_{1}+\ldots m_{n}}$ whenever $s \in \cb^1_{n}$ and $t_{1} \in \cb^1_{m_{1}}$. 
In a braided skew multicategory we also require:

\begin{enumerate}
\item[(1*)] for each $s \in \cb^1_{n}$ a function $
  s^*\colon 
  \bba_t(a_{1},\ldots,a_{n};b) \to \bba_t(a_{s1},\ldots,a_{sn};b)$
  such that  these satisfy the action equations $(fs)t = f(st)$, $f1=f$ as well as the compatibility $j(fs)=j(f)s$.
\item[(2*)] given $f \in \bba_{t}(b_1,\ldots,b_{n};c)$, $s \in \cb^1_n$, $g_{1} \in \bba_t(\overline{a_{1}};b_1)$ and $t_1 \in \cb^1_{m_{1}}$, plus $g_{i} \in \bba_l(\overline{a_{i}};b_i)$, $t_{i} \in \cb_{m_{i}}$ for $i \in \{2,\ldots,n\}$, we require the equivariance equation
\begin{equation*}\label{eq:equivariance2}
(f(g_{1},\ldots,g_{n}))s(t_{1},\ldots,t_{n}) = fs(g_{s_{1}}t_{s_{1}},\ldots,g_{s_{n}}t_{s_{n}}).
\end{equation*}
\end{enumerate}
 Once again this can be reformulated in terms of the generators.
 There are assignments $g\mapsto g\beta_i$ for $g$ tight $n$-ary and
 $1<i<n$; and $g\mapsto g\beta_i$ for $g$ loose $n$-ary and $1\le
 i<n$. These satisfy the braid relations as well as equations
 \eqref{eq:58} and \eqref{eq:b-e} insofar as these make sense. Finally
 the two actions should be compatible in the sense that
 $j(g\beta_i)=j(g)\beta_i$ for $g$ tight $n$-ary and $1<i<n$.
 
For a symmetric skew multicategory we also require that $s^*=t^*$ whenever $|s|=|t|$ and $s,t \in \cb^1_n$.  

Alternatively, letting $\cs^{1}_{n} \subseteq \cs_{n}$ denote the subgroup of permutations fixing $1 \in \{1,\ldots,n\}$,  we obtain a simpler definition of symmetric skew multicategory, by replacing each appearance of  $\cb$ by $\cs$.

\begin{remark}\label{rk:sub2}
Recall from Remark~\ref{rk:sub} that a skew multicategory $\bba$ for which the comparison functions
$j_{\overline{a},b}\colon \mathbb A_{t}(a_{1},\ldots,a_{n};b) \to \mathbb A_{l}(a_{1},\ldots,a_{n};b)$
are inclusions amounts to an ordinary multicategory equipped with a
subcollection of tight morphisms which are not nullary, contain the
identities and have the property that $f(g_{1},\ldots,g_{n})$ is tight
just when $f$ and $g_{1}$ are.  Under this correspondence a braiding
on the skew multicategory simply amounts to a braiding on the
associated multicategory with the property that if $s \in \cb^{1}_{n}$
and $f$ is a tight multimap of arity $n$ then $ fs$ is tight too.
 There is a corresponding result for symmetries with $\cb^{1}_{n}$ replaced by $\cs^{1}_{n}$.
\end{remark}

\begin{example}
The multicategory $\mathbb{FP}$ of Example~\ref{ex:FP} admits a
symmetry lifted directly from the cartesian multicategory $\bCat$.
For if $s \in \cs_n$ then $Fs \colon \ca_{s1} \times \ldots \times
\ca_{sn} \to \cb$ will preserve products in the $i$th  variable just
when $F$ preserves products in the $s_i$th variable.
Since the tight multimaps in $\mathbb{FP}$ are defined to be those
preserving products strictly in the first variable $Fs$ will be tight
so long as $F$ is and $s \in \cs^1_n$.
Accordingly $\mathbb{FP}$ is a symmetric skew multicategory.
\end{example}

\subsection{Braidings on left representable skew multicategories}
\label{sect:left-rep-braiding}

Suppose that \bba is a left representable skew multicategory
corresponding to a skew monoidal category \ca.  Given a braiding on
\bba,  
the generator $\beta_2 \in \cb^1_3$ induces a bijection $\beta_2^{*} \colon \bba_{t}(A,C,B;D) \to \bba_{t}(A,B,C;D)$.   By equivariance these isomorphisms are natural in $\ca$, as are the vertical isomorphisms below.
\begin{equation}\label{eq:beta2}
\xymatrix{
\ca((AC)B;D) \ar[d]_{\cong} \ar[rr]^{\ca(s_{A,B,C},D)}&&  \ca((AB)C,D) \ar[d]^{\cong} \\
\bba_{t}(A,C,B;D) \ar[rr]^{\beta_2^{*}} && \bba_{t}(A,B,C;D).
}
\end{equation} 
By the Yoneda Lemma there is a unique natural isomorphism 
\[ s_{A,B,C}\colon (AB)C \to (AC)B \]
rendering the diagram above commutative.

In the appendix, we prove:

\begin{theorem}\label{thm:multicat}
  Let \ca be a skew monoidal category, and \bba the corresponding left
  representable skew multicategory. The above assignment defines a
  bijection between braidings on \bba and braidings on \ca, which
 restricts to a bijection between symmetries on \bba and symmetries
  on \ca. 
\end{theorem}

\section{Symmetric skew monoidal 2-categories and symmetric monoidal bicategories}
\label{sect:2cat}

This last section is geared towards understanding the braidings on
skew monoidal 2-categories, like $\FProds$ as well as (bicategorical) braidings on the induced
monoidal bicategories. We do this using the
corresponding skew multicategories, and the results of the previous section.

We have not, thus far, mentioned 2-categorical (that is,
$\Cat$-enriched) structure.  The various structures that we have been
dealing with --- skew monoidal categories, skew multicategories and
their braided and symmetric variants --- each admit $\Cat$-enriched
analogues.  For instance a skew monoidal 2-category involves a tensor
product 2-functor, as well as 2-natural transformations $a,\ell,
 r$ all satisfying the usual five equations.  For a braided
(or symmetric) skew monoidal 2-category we of course
require that the braiding be 2-natural as well.  In a skew
2-multicategory $\mathbb A$ one has \emph{categories} of tight and
loose multimaps connected by a \emph{functor} $j\colon
\bba_t(\overline{a};b)\to \bba_\ell(\overline{a};b)$ and, again, the
substitution maps themselves become functors rather than just
functions.  In the braided/symmetric variants the actions $s^{*}$ on
the multihoms are themselves functors.  Again we can speak of
$\Cat$-enriched left representability --- which now involves
isomorphisms of categories rather than mere bijections of sets.

Just as in the unenriched setting, skew 2-multicategories in which each $j\colon \bba_t(\overline{a};b)\to \bba_\ell(\overline{a};b)$ is the inclusion of a full subcategory (rather than a mere subset) can be identified with 2-multicategories equipped with a subcollection of non-nullary tight multimorphisms having the same closure properties described in \ref{rk:sub}.  Again, a braiding/symmetry in this context simply amounts to a braiding/symmetry on the 2-multicategory which respects tight multimaps in the sense described in \ref{rk:sub2}.

$\mathbb{FP}$ is a simple example of such a symmetric skew 2-multicategory --- for each of $x=t,l$ the morphisms of $\mathbb{FP}_{x}(\ca_1,\ldots,\ca_n;\cb)$ are the natural transformations.

\begin{examples}\label{ex:monad}
More generally any pseudocommutative 2-monad $T$ on $\Cat$ \cite{Hyland2002Pseudo} gives rise to a skew 2-multicategory $\TMAlg$.  When $T$ is the 2-monad for categories with finite products we obtain the skew 2-multicategory described in Example~\ref{ex:FP}.  But there are many more examples --- the 2-monads for  permutative categories, symmetric monoidal categories and categories equipped with a given class of limits (or colimits) are all pseudocommutative \cite{Hyland2002Pseudo, Lopez-Franco2011Pseudo}.

For such a $T$ an object of the skew 2-multicategory $\TMAlg$ is a strict $T$-algebra $\mathbf A$; we write $\ca$ for the underlying category of the $T$-algebra $\mathbf A$. A multimorphism $F\colon (\mathbf
A_{1},\ldots, \mathbf A_{n}) \to \mathbf B$ is a functor $F \colon   \ca_1
\times \ldots \times \ca_n \to \cb$ equipped with the structure of an algebra
pseudomorphism in each variable separately, with these $n$
pseudomorphism structures commuting with each other in the sense
explained in \cite{Hyland2002Pseudo}.  Nullary morphisms $a\colon (-)
\to \mathbf A$ are just objects $a$ of $\ca$. There are also
{\em transformations} between the multimorphisms: these are natural
transformations which are $T$-algebra transformations in each variable
separately; once again, see \cite{Hyland2002Pseudo} for the details.

We obtain a skew 2-multicategory \TMAlg by defining a multimap
$F$ as above to be tight if it is a strict algebra morphism in the first variable --- that
is, if for all $a_{2} \in \ca_{2},\ldots,a_{n} \in \ca_{n}$ the
pseudomorphism $F(-,a_{2},\ldots,a_{n})\colon \mathbf A_{1} \to
\mathbf B$ is strict.  See Section 4.2 of \cite{bourke-lack} and the references therein for more on this example.
\end{examples}

\begin{proposition}\label{prop:pseudo}
Let $T$ be an accessible symmetric pseudo-commutative 2-monad on $\Cat$.  Then the skew $2$-multicategory $\TMAlg$ of Example~\ref{ex:monad} is symmetric.  It is also left representable and closed.
\end{proposition}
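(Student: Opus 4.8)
The plan is to treat the three assertions separately. That $\TMAlg$ is symmetric is the only genuinely new point, and I would prove it carefully; left representability and closedness are, in substance, already in the literature on these examples, and I would deduce them from there. It is worth recording at the outset that only the symmetry uses the hypothesis that $T$ is \emph{symmetric} pseudo-commutative: accessibility is what makes left representability and closedness go through, and a merely pseudo-commutative $T$ would still give a \emph{braided} (rather than symmetric) skew multicategory.

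For the symmetry, the first move is to reduce to the underlying loose multicategory. In Example~\ref{ex:monad} a tight multimap is by definition a loose multimap that happens to be strict in its first variable, so the comparison functions of $\TMAlg$ are subset inclusions; hence, by the criterion recorded just after the definition of braided skew multicategory (together with Remark~\ref{rk:sub}), a symmetry on $\TMAlg$ is precisely a symmetry on the multicategory $\TMAlg_{l}$ of loose multimaps under which the tight multimaps are closed under the $\cs^{1}_{n}$-actions. The closure property is immediate: an $s\in\cs^{1}_{n}$ fixes $1$, so $Fs$ has the same first variable as $F$, carrying literally the same first-variable pseudomorphism structure, and so if the latter is strict then so is the former. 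For the symmetry on $\TMAlg_{l}$ itself, given a loose multimap $F\colon(\mathbf{A}_{1},\ldots,\mathbf{A}_{n})\to\mathbf{B}$ -- that is, a functor equipped with $n$ pairwise-commuting pseudomorphism structures -- and $s\in\cs_{n}$, I would define $Fs$ as the composite of $F$ with the permutation of factors $\ca_{s1}\times\cdots\times\ca_{sn}\to\ca_{1}\times\cdots\times\ca_{n}$, transporting the pseudomorphism structures along this permutation. The symmetric pseudo-commutativity of $T$ is exactly what ensures that the transported structures still commute pairwise and that this assignment satisfies the action and equivariance laws of a symmetric multicategory; this is essentially the content of \cite{Hyland2002Pseudo} (see also Section~4.2 of \cite{bourke-lack}), and I would carry out -- or cite -- that verification. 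Finally, the symmetry axiom $s^{*}=t^{*}$ whenever $|s|=|t|$ holds because we are using the symmetric groups $\cs_{n}$ and $\cs^{1}_{n}$ throughout, not the braid groups.

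For left representability and closedness I would appeal to \cite{bourko-skew}, where the $2$-category $\TAlg_{s}$ of strict maps is shown to carry a skew monoidal \emph{closed} structure for an accessible pseudo-commutative $T$, and to \cite{bourke-lack}, where $\TMAlg$ is shown to be a left representable (and closed) skew multicategory having this skew monoidal category as its underlying one. The relevant facts to recall are: since $T$ is accessible, $\TAlg_{s}$ is locally presentable and in particular cocomplete, so the representing objects exist -- the tight tensor $AB$ (classifying multimaps strict in the first variable and pseudomorphic in the remaining ones), the loose tensor (pseudomorphic in every variable), and the unit -- being built from free algebras by suitable colimits; the ``left'' half of left representability follows from the iterated universal property of these tensors; and, for closedness, \cite{bourko-skew} exhibits a right adjoint $[A,-]$ to each $-\otimes A$, whose underlying category at $\mathbf{B}$ is that of pseudomorphisms $\mathbf{A}\to\mathbf{B}$ with the algebra structure induced by the pseudo-commutativity. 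Together with the symmetry this exhibits $\TAlg_{s}$ as a symmetric skew monoidal closed category, so by Theorem~\ref{thm:closed} one also obtains a symmetric skew closed structure -- which is ultimately what is wanted for examples such as $\FProd_{s}$.

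The step I expect to be the main obstacle is the verification inside the construction of the symmetry on $\TMAlg_{l}$: carefully checking that transporting pseudomorphism structures along a permutation preserves pairwise commutativity, and that substitution of multilinear maps is equivariant -- in short, that the \emph{symmetric} pseudo-commutativity axioms are exactly what is needed and nothing more. This is bookkeeping within the Hyland--Power framework rather than a conceptual difficulty, but it is the only part not essentially already available; by contrast, were one unwilling to cite \cite{bourke-lack} for left representability, re-proving the ``left'' compatibility of the iterated tensor with the extra variables would be the other place demanding real work.
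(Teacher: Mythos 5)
Your proposal is correct and takes essentially the same route as the paper: it cites the literature (Examples 4.8 of \cite{bourke-lack}, via \cite{Hyland2002Pseudo}) for left representability, closedness, and the symmetry on the loose multicategory, and then verifies directly that for $s\in\cs^1_n$ the multimap $Fs$ carries the same first-variable pseudomorphism structure as $F$ (since $s$ fixes $1$), hence is tight whenever $F$ is. The explicit framing via Remark~\ref{rk:sub} and the asides about which hypotheses are used where are harmless additions.
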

\begin{proof}
Left representability and closedness of the underlying skew multicategories are established in Examples 4.8 of \cite{bourke-lack}. The additional 2-categorical aspects that must be verified are straightforward. 
The underlying 2-multicategory of loose maps is symmetric by Proposition 18 of \cite{Hyland2002Pseudo}.   A multimap $F\colon (\mathbf
A_{1},\ldots, \mathbf A_{n}) \to \mathbf B$ involves a functor $\ca_{1} \times \ldots \ca_{n} \to \cb$ equipped with a pseudomap structure on the functor $$F(a_{1},\ldots,a_{i-1},-,a_{i+1},\ldots,a_{n})\colon \ca_{i} \to \cb $$for each $i \in \{1,\ldots,n\}$ and tuple
$(a_{1},\ldots,a_{i-1},a_{i+1},\ldots,a_{n})$.  The different
pseudomap structures are required to satisfy compatibility axioms.
The symmetry $s \in \cs_n$ permutes the variables and the pseudomap
structures --- in particular, if $s \in \cs^1_n$ the multimap $Fs
\colon (\mathbf A_{1},\mathbf A_{s2}\ldots, \mathbf A_{sn}) \to
\mathbf B$ has pseudomap $(Fs)(-,a_{2},\ldots,a_{n})\colon
\mathbf A_{1} \to \mathbf B$ given by
$F(-,a_{s^{-1}2},\ldots,a_{s^{-1}n})\colon \mathbf A_{1} \to \mathbf
B$ which is strict whenever $F$ is strict in the first variable.
Hence $Fs$ is tight if $F$ is tight and $s\in \cs^1_n$,  as required.
\end{proof}

\begin{corollary}\label{cor:monad}
Let $T$ be an accessible symmetric pseudo-commutative 2-monad on
$\Cat$.  Then the 2-category $\TAlg_s$ of strict algebras and strict morphisms admits a closed symmetric skew monoidal structure.
\end{corollary}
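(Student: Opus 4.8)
The plan is to deduce this directly from Proposition~\ref{prop:pseudo} and Theorem~\ref{thm:passage}. By Proposition~\ref{prop:pseudo}, the skew multicategory $\TMAlg$ of Example~\ref{ex:monad} is symmetric, left representable, and closed. Its underlying category of tight unary multimaps has as objects the strict $T$-algebras and as morphisms the tight unary multimaps $\mathbf A\to\mathbf B$, which by the definition of tightness in Example~\ref{ex:monad} are precisely the strict $T$-algebra morphisms; thus this underlying category is exactly $\TAlg_s$.

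First I would apply Theorem~\ref{thm:passage} to $\bba=\TMAlg$. Since $\TMAlg$ is left representable, the construction recalled before that theorem (together with \cite[Theorem~6.1]{bourke-lack}) equips its underlying category $\ca=\TAlg_s$ with a skew monoidal structure $(\otimes,a,\ell,r)$; and since $\TMAlg$ is symmetric, Theorem~\ref{thm:passage} supplies a symmetry $s$, making $(\TAlg_s,\otimes,a,\ell,r,s)$ a symmetric skew monoidal category.

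It then remains to see that this skew monoidal structure is closed, i.e.\ that each functor $-\otimes A\colon\TAlg_s\to\TAlg_s$ has a right adjoint. By left representability we have a natural isomorphism $\ca(XA,C)\cong\bba_t(X,A;C)$, so $-\otimes A$ admits a right adjoint exactly when each functor $\bba_t(-,A;C)\colon\ca^{\mathrm{op}}\to\Set$ is representable; but this is precisely what it means for the skew multicategory $\bba$ to be closed in the sense of \cite{bourke-lack}. As $\TMAlg$ is closed by Proposition~\ref{prop:pseudo}, the skew monoidal category $\TAlg_s$ is therefore closed, which completes the argument.

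Since essentially all of the real content is already packaged into the earlier results, I expect no serious obstacle; the only point requiring genuine care is the matching of the two notions of closedness — for skew multicategories on the one hand and for skew monoidal categories on the other — and the verification that the representing internal-homs and the adjunction isomorphisms do correspond under left representability. This is routine once one unwinds the definitions, and is in any case implicit in the 2-equivalence of \cite[Theorem~6.1]{bourke-lack}, which one would expect (and can check) to respect closed structures.
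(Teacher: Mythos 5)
Your proposal is correct and follows essentially the same route as the paper: the symmetric skew monoidal structure comes from combining Proposition~\ref{prop:pseudo} with Theorem~\ref{thm:passage}, and closedness from the closedness of the skew multicategory established in Proposition~\ref{prop:pseudo}. The only difference is that where you sketch by hand the passage from closedness of a left representable skew multicategory to closedness of the associated skew monoidal category, the paper simply cites Theorem~6.4 of \cite{bourke-lack}, which is exactly the statement you outline.
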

\begin{proof}
The construction of a symmetric skew monoidal category from a left representable symmetric skew multicategory given in Theorem~\ref{thm:multicat} admits an evident $\Cat$-enriched version.  Applying this, together with Proposition~\ref{prop:pseudo}, we obtain the desired symmetric skew monoidal structure on the 2-category  $\TAlg_s$. For closedness we combine
Proposition~\ref{prop:pseudo} above and Theorem 6.4 of
\cite{bourke-lack} (in its $\Cat$-enriched form).
\end{proof}
All of the pseudo-commutative 2-monads described in
Examples~\ref{ex:monad} are accessible symmetric pseudo-commutative
\cite{Hyland2002Pseudo, Lopez-Franco2011Pseudo}.  Accordingly it
follows from Corollary~\ref{cor:monad} that the categories of
permutative categories, of symmetric monoidal categories, and of
categories with a given class of limits or a given class of
colimits all admit closed symmetric skew monoidal structures.

As a special case of this we obtain the symmetric skew structure on
$\FProds$ described in the introduction, in which maps $\mathcal A
\mathcal B \to \mathcal C$ correspond to functors $\mathcal A
\times\mathcal B \to \mathcal C$ preserving products strictly in the
first variable and up to isomorphism in the second.
The internal hom $[\mathcal A,\mathcal B]$ is the usual category of finite product preserving functors (in the up to isomorphism sense) with products pointwise as in $\mathcal B$.

\subsection{Braided monoidal bicategories}\label{sect:bicat}

Finally, we explain how our 2-categorical examples give rise to
symmetric monoidal bicategories \cite{BaezNeuchl,  Gurski-BraidedMonoidalBicategories}.

To begin with we observe that if $\cc$ is a skew monoidal 2-category
whose coherence constraints are equivalences, then $\cc$ is, in
particular, a monoidal bicategory.  (This is the approach of Section
6.4.3 of \cite{bourko-skew}, on which we now build.)  To equip our
monoidal bicategory with a symmetry, we need to provide pseudonatural
equivalences $AB \to BA$ together with, replacing the
symmetry equations, certain invertible modifications satisfying a host
of coherence axioms.  Now the 2-variable morphisms $AB \to BA$ are not part of the structure that we are, in the skew setting, presented with.  However since the components $\ell_{A}B:A \ast B = (IA)B \to AB$ are equivalences, we can obtain a symmetry of the required form as the composite 
\begin{equation}\label{eq:bicatbraiding}
\xymatrix{
AB \ar[r]^-{{\ell^*_{A}}B} & (IA)B \ar[r]^{s_{I,A,B}} & (IB)A \ar[r]^{\ell_{B}A} & BA
}
\end{equation}
in which ${\ell^*_{A}}:A \to IA$ is the equivalence inverse of $\ell_{A}$.  The composite equivalence will be the desired component of the braiding on our monoidal bicategory.
\begin{proposition}\label{prop:bmb}
Let $\cc$ be a symmetric skew monoidal 2-category whose coherence constraints are equivalences.  Then $\cc$ is a symmetric monoidal bicategory with braiding components as in \eqref{eq:bicatbraiding}.
\end{proposition}

\begin{proof}
Let us firstly note that the symmetry axioms for a symmetric monoidal bicategory do \emph{not} refer to the unit.  Using the prefix \emph{semi}, as usual, to specify that part of a structure not mentioning the unit, it follows that a symmetry on a monoidal bicategory amounts to a symmetry on its underlying semi-monoidal bicategory.  

Now the analysis of Section~\ref{section:non-unital} applies equally to this \Cat-enriched context, and so we obtain skew semimonoidal 2-category with product $A\ast B=IA.B$ and a semimonoidal 2-functor $(1,\ell1):(\cc,.)\to(\cc,\ast)$.    Since the associators for $.$ are equivalences and since the equivalences $A \ast B \to AB$ commute with the respective associators it follows, by 2 out of 3 for equivalences, that the associators for $(\cc,\ast)$ are equivalences --- thus $(\cc,\ast)$ is a semi-monoidal bicategory too.  Furthermore, since by Proposition~\ref{prop:2-variable} the classical symmetry axioms holds on the nose, $(\cc,\ast)$ is in fact a symmetric semi-monoidal bicategory.  

Finally we transport the braiding across the componentwise equivalence $(1,l1):(\cc,.) \to (\cc,\ast)$ of semi-monoidal bicategories to obtain the structure of a symmetric semi-monoidal bicategory on $(\cc,.)$.  The resulting symmetry, obtained by transport of structure, is that described in \eqref{eq:bicatbraiding}.
\end{proof}

Now adapting the approach of Section 6.4.3 of \cite{bourko-skew} let
$(\TAlg_{s})_{c}$ denote the full sub 2-category of $\TAlg_{s}$
containing the flexible (cofibrant) $T$-algebras.  By \cite
[Proposition 6.5]{bourko-skew} if $T$ is an accessible
pseudo-commutative 2-monad on $\Cat$ then the skew monoidal structure
on $\TAlg_s$ restricts to $(\TAlg_{s})_{c}$ where the coherence
constraints become equivalences.  If moreover $T$ is symmetric
pseudo-commutative then, by Corollary~\ref{cor:monad},
$(\TAlg_{s})_{c}$ is a symmetric skew monoidal 2-category whose
coherence constraints are equivalences --- and so, by
Proposition~\ref{prop:bmb} above, it is a symmetric monoidal bicategory.  Lastly, as in Lemma 6.6 of \cite{bourko-skew}, the composite inclusion $(\TAlg_{s})_{c} \to \TAlg_{s} \to \TAlg$ of the flexible algebras and strict morphisms into the 2-category of strict algebras and pseudomorphisms is a biequivalence of 2-categories; transporting the structure of a symmetric monoidal bicategory along the biequivalence we obtain the symmetric monoidal bicategory structure on $\TAlg$.  That is:

\begin{theorem}
Let $T$ be an accessible symmetric pseudo-commutative 2-monad on $\Cat$.  Then the 2-category $\TAlg$ admits the structure of a symmetric monoidal bicategory.
\end{theorem}
 
\appendix

\section{Proof of Theorem~\ref{thm:multicat}}

\label{sect:left-rep-braiding}

In this appendix we give the remaining details in the comparison
between braidings on skew monoidal categories and braidings on the
corresponding left representable skew multicategory.

Suppose that \bba is a left representable skew multicategory
corresponding to a skew monoidal category \ca. Commutativity of the diagram
\begin{equation*}
\xymatrix{
\ca((ac)b;d) \ar[d]_{\cong} \ar[rr]^{\ca(s_{a,b,c},d)}&&  \ca((ab)c,d) \ar[d]^{\cong} \\
\bba_{t}(a,c,b;d) \ar[rr]^{\beta_2^{*}} && \bba_{t}(a,b,c;d)
}
\end{equation*} 
determines a bijection between isomorphisms $s_{a,b,c}:(ab)c \to (ac)b$ and invertible actions 
$g\mapsto
g\beta_2$ for each tight ternary $g$,
 subject to the requirement that  $f\circ_1g\beta_2=(f\circ_1
 g)\beta_2$ for all tight unary $f$. 
 The first step is to understand braidings on \ca in terms of tight
 maps in $\bba$.
 
\begin{theorem}\label{thm:tight-parts}
There is a bijection between braidings on the skew monoidal category
\ca and invertible actions $g\mapsto g\beta_j$ for tight $m$-ary $g$
and $1<j<m$, subject to the braid equations and satisfying the
conditions
\begin{align}
  \label{eq:a}
  f\circ_i g\beta_j &= (f\circ_i g)\beta_{i+j-1}  \\
  \label{eq:b}
  g\beta_j\circ_i f &= (g\circ_i f)\beta_j \qquad\qquad \text{if $j<i-1$} \\
  \label{eq:c}
  g\beta_j\circ_i f &= (g\circ_{i-1}f)\beta_{j+n-1}\ldots \beta_{j} \quad \text{if $j=i-1$} \\
  \label{eq:d}
  g\beta_j\circ_i f &= (g\circ_{i+1}f)\beta_j\ldots\beta_{j+n-1} \quad \text{if $j=i$} \\
  \label{eq:e}
  g\beta_j\circ_i f &=     (g\circ_i f)\beta_{j+n-1} \qquad\qquad \text{if $j>i$}
\end{align}
for tight $n$-ary $f$. Under this bijection the braiding $s$
 corresponds as above to the action $g\mapsto g\beta_2$ for tight
 ternary $g$.
\end{theorem}

Suppose given isomorphisms $s_{a,b,c}\colon (ab)c\to (ac)b$ and
the corresponding actions $g\mapsto g\beta_2$ for tight ternary
$g$. Then \eqref{eq:a} holds for $n=1$ and $m=3$ (in which case
necessarily $i=1$ and $j=2$).
Naturality of $s$ is equivalent to \eqref{eq:c}, \eqref{eq:d}, and
\eqref{eq:e} for $n=1$ and $m=3$ (in which case necessarily
$j=2$). Note also that when $n=1$ and $m=3$ condition \eqref{eq:b} is empty.

Given objects
$a,b,c\in\bba$, let $\theta^{abc}_3\in\bba_t(a,b,c;(ab)c)$ be the
universal tight ternary map.  Postcomposing this by $s_{a,b,c}\colon
(ab)c\to (ac)b$ 
equally gives the map $\theta^{acb}_3\beta_2$. An arbitrary tight ternary
map  $g\in\bba_t(a,b,c;x)$ has the form $g'\circ_1\theta^{abc}_3$ for a
unique tight unary $g'$, and we then have
$g\beta_2=g'\circ_1\theta^{abc}_3\beta_2$.  In the following we shall
usually omit the superscripts in the maps $\theta^{abc}_3$. 

Now for $n \geq 3$ each tight
$n$-ary $g$ has the form  $g=g'\circ_1\theta_3$ for a unique tight
 $n-2$ -ary $g'$. We may now define $g\beta_2$ to be
$g'\circ_1\theta_3\beta_2$, and deduce that
$h\circ_1\theta_3\beta_2=(h\circ_1\theta_3)\beta_2$ for all tight $h$.

\begin{proposition} 
The special case
 \begin{equation}
  \label{eq:circ1beta2}
  f\circ_1 g\beta_2 = (f\circ_1 g)\beta_2 \hspace{0.1cm} 
\end{equation}
of \eqref{eq:a} holds for all tight $f$
and $g$. 
Furthermore the action is uniquely determined by that property
together with the values of $\theta_3\beta_2$.
\end{proposition}

\proof
 Uniqueness follows on taking $g=\theta_3$. To verify
\eqref{eq:circ1beta2} in general, write
$g=g'\circ_1\theta_3$; then
 
  \[ f\circ_1g\beta_2=f\circ_1
  (g'\circ_1\theta_3\beta_2)=(f\circ_1g')\circ_1\theta_3\beta_2 = 
  ((f\circ_1 g')\circ_1\theta_3)\beta_2 =(f\circ_1g)\beta_2.\]
\endproof

\begin{proposition}\label{prop:extended}
  Suppose that the equations \eqref{eq:c}--~\eqref{eq:e} hold for
  all tight ternary $g$ and tight unary $f$. Then they hold for all
  tight $g$ and tight unary $f$ when $j=2$.
\end{proposition}

\proof This follows by
\begin{align*}
  g\beta_2\circ_1 f &= (g'\circ_1\theta_3\beta_2)\circ_1 f \\
                    &= g'\circ_1(\theta_3\beta_2\circ_1f) \\
                    &= g'\circ_1 (\theta_3\circ_1f)\beta_2 \tag{by
                      \eqref{eq:e}}\\
  &= (g'\circ_1(\theta_3\circ_1f))\beta_2 \tag{by
    \eqref{eq:circ1beta2}} \\
                    &= (g\circ_1 f)\beta_2 \\
  g\beta_2\circ_2 f &= (g'\circ_1\theta_3\beta_2)\circ_2 f \\
                    &= g'\circ_1(\theta_3\beta_2\circ_2 f) \\
                    &= g'\circ_1(\theta_3\circ_3 f)\beta_2 \tag{by
                      \eqref{eq:d}}\\
  &= (g'\circ_1(\theta_3\circ_3 f))\beta_2 \tag{by
    \eqref{eq:circ1beta2}} \\
                    &= (g\circ_3 f)\beta_2 \\
  g\beta_2\circ_3 f &= (g'\circ_1\theta_3\beta_2)\circ_3 f \\
                    &= g'\circ_1 (\theta_3\beta_2\circ_3 f) \\
                    &= g'\circ_1(\theta_3\circ_2 f)\beta_2 \tag{by
                      \eqref{eq:c}}\\
  &= (g'\circ_1(\theta_3\circ_2f)\beta_2 \tag{by
    \eqref{eq:circ1beta2}} \\
  &= (g\circ_2f)\beta_2.  \qedhere
\end{align*}

In what follows we let $\theta_m\in\bba_t(a_1,\ldots,a_m;a_1\ldots
a_m)$ be a universal tight $m$-ary map with $m>3$.  We can now define
further actions $g\mapsto g\beta_j$ as follows.

\begin{proposition}\label{prop:eq-e}
  There is a unique assignment $g\mapsto g\beta_j$ for a tight $m$-ary
  $g$ and $2\le j<m$, such that \eqref{eq:e} holds for all tight
  $f$. 
\end{proposition}

\proof
For the uniqueness, take $f=\theta_{j-1}$, to see that
\begin{equation}
  \label{eq:defining}
 (g''\circ_1\theta_{j-1})\beta_j = g''\beta_2\circ_1\theta_{j-1}
\end{equation}
and use the fact that any tight $g$ can be written as
$g''\circ_1\theta_{j-1}$ for a unique tight $g''$.

If $f$ is an arbitrary tight $n$-ary map, write
$\theta_{j-1}\circ_i f=f'\circ_1\theta_{n+j-2}$, where $f'$ is tight
unary, and now
\begin{align*}
  g\beta_j\circ_i f &= (g''\beta_2\circ_1\theta_{j-1})\circ_i f\\
                    &= g''\beta_2\circ_1 (\theta_{j-1}\circ_i f) \\
                    &= g''\beta_2\circ_1 (f'\circ_1\theta_{n+j-2}) \\
                    &= (g''\beta_2\circ_1 f')\circ_1\theta_{n+j-2} \\
                    &= (g''\circ_1 f')\beta_2\circ_1\theta_{n+j-2}
                      \tag{Proposition~\ref{prop:extended}}\\
                    &= ((g''\circ_1
                      f')\circ_1\theta_{n+j-2})\beta_{n+j-1} \tag{by defn}\\
                    &= (g''\circ_1(f'\circ_1\theta_{n+j-2})\beta_{n+j-1} \\
                    &= (g''\circ_1(\theta_{j-1}\circ_i f))\beta_{n+j-1} \\
  &= (g\circ_1 f)\beta_{n+j-1} 
\end{align*}
gives the result.  
\endproof
 
\begin{proposition}\label{prop:s-natural}
  There is a bijection between natural isomorphisms $s$ and invertible
  assignments $g\mapsto g\beta_j$ for tight $m$-ary $g$ and for $2\le
  j<m$, subject to equation \eqref{eq:a} for $i=1$ and  all tight
  $g$ and $f$; \eqref{eq:b} and \eqref{eq:e} for all tight $g$
  and $f$;  \eqref{eq:c} and \eqref{eq:d} for
  all tight $g$ and tight unary $f$.
\end{proposition}

\proof
We have proved \eqref{eq:e} in Proposition~\ref{prop:eq-e}.
The relevant parts of \eqref{eq:a} hold by the calculation
\begin{align*}
  f\circ_1g\beta_j &= f\circ_1(g''\beta_2\circ_1\theta_{j-1}) \\
                   &= (f\circ_1g''\beta_2)\circ_1\theta_{j-1} \\
  &= (f\circ_1g'')\beta_2\circ_1\theta_{j-1} \tag{by \eqref{eq:circ1beta2}} \\
                   &= ((f\circ_1 g'')\circ_1\theta_{j-1})\beta_j \tag{by \eqref{eq:defining}} \\
  &= (f\circ_1 g)\beta_j.
\end{align*}

For \eqref{eq:d} with tight unary $f$ we have
 \begin{align*}
g\beta_j\circ_j f&= (g''\beta_2\circ_1\theta_{j-1})\circ_j f \\
                        &= (g''\beta_2 \circ_2 f)\circ_1\theta_{j-1} \\
                        &= (g''\circ_3 f)\beta_2 \circ_1 \theta_{j-1}
                          \tag{Proposition~\ref{prop:extended}} \\
                        &= ((g''\circ_3 f) \circ_1
                          \theta_{j-1})\beta_j \tag{by \eqref{eq:defining}}\\
                        &= (g''\circ_1 \theta_{j-1}) \circ_{j+1} f)\beta_j\\
                        &= (g \circ_{j+1} f)\beta_{j}.
 \end{align*}
  A similar argument gives \eqref{eq:c} for tight unary
 $f$.  
Finally for \eqref{eq:b} with $g$ tight $m$-ary, $f$ tight $n$-ary, and $j+1<i\le m$, write
$g=g'\circ_1\theta_{j+1}$; then
\begin{align*}
  g\beta_j\circ_i f &= (g'\circ_1\theta_{j+1})\beta_j\circ_i f \\
                    &= (g'\circ_1\theta_{j+1}\beta_j)\circ_i f   \tag{Proposition~\ref{prop:eq-e}}
  \\
                    &= (g'\circ_{i-j}f)\circ_1\theta_{j+1}\beta_j \\
                    &= ((g'\circ_{i-j}f)\circ_1\theta_{j+1})\beta_j
                      \tag{Proposition~\ref{prop:eq-e}} \\ 
                    &= ((g'\circ_1\theta_{j+1})\circ_i f)\beta_j \\
  &= (g\circ_i f)\beta_{j}. \qedhere
\end{align*}

Next we investigate the various conditions on $s$ in terms of the
corresponding assignments $g\mapsto g\beta_j$. First, we observe:

\begin{proposition}
  If $g$ is tight $m$-ary and $2\le j<j+1<i<m$ then  $g\beta_i\beta_j=g\beta_j\beta_i$.
\end{proposition}

\proof
Write $g=g'\circ_1\theta_{i-1}$. Then
\begin{align*}
  (g\beta_i)\beta_j &= (g'\beta_2\circ_1\theta_{i-1})\beta_j
                      \tag{defn} \\
                    &= g'\beta_2\circ_1 \theta_{i-1}\beta_j \tag{by \eqref{eq:a} with $i=1$} \\                                          &= g'\beta_2\circ_1 (\theta_{i-j+1}\circ_1\theta_{j-1})\beta_j \\
  &= g'\beta_2\circ_1(\theta_{i-j+1}\beta_2\circ_1\theta_{j-1})
    \tag{defn} \\
                    &= (g'\beta_2\circ_1\theta_{i-j+1}\beta_2)\circ_1\theta_{j-1} \\
  &= (g'\circ_1\theta_{i-j+1}\beta_2)\beta_{i-j+2}\circ_1\theta_{j-1}
    \tag{by  \eqref{eq:e}}
  \\
  &= ((g'\circ_1\theta_{i-j+1}\beta_2)\circ_1\theta_{j-1})\beta_i
    \tag{by  \eqref{eq:e}} \\
                    &= (g'\circ_1(\theta_{i-j+1}\beta_2\circ_1\theta_{j-1}))\beta_i \\
  &= (g'\circ_1(\theta_{i-j+1}\circ_1\theta_{j-1})\beta_j)\beta_i
    \tag{defn} \\
                    &= (g'\circ_1\theta_{i-1}\beta_j)\beta_i \\
                    &=  ( (g'\circ_1\theta_{i-1})\beta_j ) \beta_i
                      \tag{by \eqref{eq:a} with $i=1$} \\
  &=  (g\beta_j ) \beta_i. \qedhere
\end{align*}

\begin{proposition}
  Let $s\colon (ab)c\to (ac)b$ be natural isomorphisms in \ca, and $g\mapsto
  g\beta_j$ the corresponding actions on tight morphisms in \bba. The
  following conditions are equivalent:
  \begin{enumerate}[(a)]
  \item $s$ satisfies \eqref{eq:S*};
    \item $\theta_2\circ_2\theta_3\beta_2 =
      (\theta_2\circ_2\theta_3)\beta_3$;
      \item \eqref{eq:a} holds for all tight $f$ and $g$.
  \end{enumerate}
\end{proposition}

\proof
The equivalence of (a) and (b) is a straightforward translation using
universality of the $\theta$s, while (b) is a special case of (c). So
we just need to check that the general case follows from the special
one.

First we show that $f\circ_2\theta_3\beta_2=(f\circ_2\theta_3)\beta_3$
for all tight $f$.  Write $f=f'\circ_1\theta_2$ with $f'$ tight, then:
\begin{align*}
  f\circ_2\theta_3\beta_2 &= (f'\circ_1\theta_2)\circ_2\theta_3\beta_2
  \\
                          &= f'\circ_1(\theta_2\circ_2\theta_3\beta_2) \\
                          &= f'\circ_1(\theta_2\circ_2\theta_3)\beta_3 \tag{by (b)} \\
  &= (f'\circ_1(\theta_2\circ_2\theta_3))\beta_3 \tag{by \eqref{eq:a} with $i=1$} \\
                          &= ((f'\circ_1\theta_2)\circ_2\theta_3)\beta_3 \\
  &= (f\circ_2\theta_3)\beta_3.
\end{align*}
 For $i>2$ write $f=f'\circ_1\theta_{i-1}$, and observe that
\begin{align*}
  f\circ_i \theta_3\beta_2 &=
                             (f'\circ_1\theta_{i-1})\circ_i\theta_3\beta_2
  \\
                           &= (f'\circ_2\theta_3\beta_2)\circ_1\theta_{i-1} \\
                           &= (f'\circ_2\theta_3)\beta_3\circ_1\theta_{i-1} \tag{by the preceding calculation} \\
  &= ((f'\circ_2\theta_3)\circ_1\theta_{i-1})\beta_{i+1} \tag{by  
    \eqref{eq:e}} \\
                           &= ((f'\circ_1\theta_{i-1})\circ_i\theta_3)\beta_{i+1} \\
  &= (f\circ_i\theta_3)\beta_{i+1}.
\end{align*}
 Thus \eqref{eq:a} holds for $g=\theta_3$. For tight $g$ and
$j=2$ write $g=g'\circ_1\theta_3$, so that
\begin{align*}
  f\circ_i g\beta_2 &= f\circ_i(g'\circ_1\theta_3\beta_2) \\
                    &= (f\circ_i g')\circ_i\theta_{3}\beta_2 \\
                    &= ((f\circ_i g')\circ_i\theta_3)\beta_{i+1} \tag{by the preceding calculation} \\
                    &= (f\circ_i(g'\circ_1\theta_3))\beta_{i+1} \\
  &= (f\circ_i g)\beta_{i+1}.
\end{align*}
Finally for $j>2$ write $g=g''\circ_1\theta_{j-1}$, so that
\begin{align*}
  f\circ_ig\beta_j &= f\circ_i (g''\beta_2\circ_1\theta_{j-1}) \\
                   &= (f\circ_i g''\beta_2)\circ_i\theta_{j-1} \\
                   &= (f\circ_i g'')\beta_{i+1}\circ_i\theta_{j-1} \tag{by the preceding calculation} \\
                   &= ((f\circ_i g'')\circ_i\theta_{j-1})\beta_{i+j-1}
                     \tag{by \eqref{eq:e}}\\
                   &= (f\circ_i(g''\circ_1\theta_{j-1}))\beta_{i+j-1} \\
  &= (f\circ_i g)\beta_{i+j-1}
\end{align*}
as required.
\endproof

\begin{proposition}
  Let $s\colon (ab)c\to (ac)b$ be natural isomorphisms in \ca, and $g\mapsto
  g\beta_j$ the corresponding actions on tight morphisms in \bba. The
  following conditions are equivalent:
  \begin{enumerate}[(a)]
  \item $s$ satisfies \eqref{eq:S3a};
    \item $\theta_3\beta_2\circ_3\theta_2 =
     (\theta_3\circ_2\theta_2)\beta_3\beta_2$;
      \item \eqref{eq:c} holds for all tight $f$ and $g$.
  \end{enumerate}
\end{proposition}

\proof
The equivalence of (a) and (b) is a straightforward translation using
universality of the $\theta$s, while (b) is a special case of (c). So
we just need to check that the general case follows from the special
one.

First we show that $g\beta_2\circ_3\theta_2 =
(g\circ_2\theta_2)\beta_3\beta_2$ for all tight $g$ (of arity at least
3). Write $g=g'\circ_1\theta_3$; then
\begin{align*}
  g\beta_2\circ_3\theta_2 &= (g'\circ_1\theta_3)\beta_2\circ_3\theta_2
  \\
                          &= (g'\circ_1\theta_3\beta_2)\circ_3\theta_2
                            \tag{by  \eqref{eq:e}} \\
                          &= g'\circ_1(\theta_3\beta_2\circ_3\theta_2) \\
                          &= g'\circ_1(\theta_3\circ_2\theta_2)\beta_3\beta_2 \tag{by (b)} \\
  &= (g'\circ_1(\theta_3\circ_2\theta_2)\beta_3)\beta_2 \tag{by 
    \eqref{eq:a} with $i=1$} \\
  &= (g'\circ_1(\theta_3\circ_2\theta_2))\beta_3\beta_2 \tag{by 
    \eqref{eq:a} with $i=1$} \\
  &= (g\circ_2\theta_2)\beta_3\beta_2.
\end{align*}
For  $j>2$ write $g=g''\circ_1\theta_{j-1}$, and observe that 
\begin{align*}
  g\beta_j\circ_{j+1}\theta_2 &=
                                (g''\circ_1\theta_{j-1})\beta_j\circ_{j+1}\theta_2  \\
                              &=
                                (g''\beta_2\circ_1\theta_{j-1})\circ_{j+1}\theta_2
                                \tag{by \eqref{eq:e}} \\
                              &= (g''\beta_2\circ_3\theta_{2})\circ_1\theta_{j-1} \\
                              &= (g''\circ_2\theta_2)\beta_3\beta_2\circ_1\theta_{j-1} \tag{by the preceding calculation} \\
  &= ((g''\circ_2\theta_2)\beta_3\circ_1\theta_{j-1})\beta_j
    \tag{by \eqref{eq:e}} \\
  &= ((g''\circ_2\theta_2)\circ_1\theta_{j-1})\beta_{j+1}\beta_j
    \tag{by \eqref{eq:e}} \\ 
  &= (g\circ_j\theta_2)\beta_{j+1}\beta_j. 
\end{align*}
Finally prove the general case by induction on the arity $n$ of
$f$. The base case $n=1$ holds by Proposition~\ref{prop:s-natural}. Write
$f=f'\circ_1\theta_2$; then $f'$ has arity strictly less than that of
$f$, allowing the induction, and 
\begin{align*}
  g\beta_j\circ_{j+1} f &= g\beta_j\circ_{j+1} (f'\circ_1\theta_2) \\
                        &= (g\beta_j\circ_{j+1} f')\circ_{j+1}\theta_2 \\
  &= (g\circ_j f')\beta_{j+n-2}\ldots\beta_j\circ_{j+1}\theta_2
    \tag{ind hyp} \\
  &= ((g\circ_j
    f')\beta_{j+n-2}\ldots\beta_{j+1}\circ_j\theta_2)\beta_{j+1}\beta_j
    \tag{previous case} \\
  &= ((g\circ_j f')\circ_j\theta_2)\beta_{j+n-1}\ldots\beta_j \tag{by \eqref{eq:e}} \\ 
  &= (g\circ_j f)\beta_{j+n-1}\ldots\beta_j
\end{align*}
as required.
\endproof

Similarly we have:

\begin{proposition}
  Let $s\colon (ab)c\to (ac)b$ be natural isomorphisms in \ca, and $g\mapsto
  g\beta_j$ the corresponding actions on tight morphisms in \bba. The
  following conditions are equivalent:
  \begin{enumerate}[(a)]
  \item $s$ satisfies \eqref{eq:S3b};
    \item $\theta_3\beta_2\circ_2\theta_2 =
      (\theta_3\circ_3\theta_2)\beta_2\beta_3$;
      \item \eqref{eq:d} holds for all tight $f$ and $g$. \qedhere
  \end{enumerate}
\end{proposition}

The next result completes the proof of Theorem~\ref{thm:tight-parts}.

\begin{proposition}
  Let $s\colon (ab)c\to (ac)b$ be natural isomorphisms in \ca, and $g\mapsto
  g\beta_j$ the corresponding actions on tight morphisms in
  \bba. If \eqref{eq:a} holds for all tight $g$ and $f$ then the
  following conditions are equivalent:
  \begin{enumerate}[(a)]
  \item $s$ satisfies \eqref{eq:S2};
    \item $\theta_4\beta_2\beta_3\beta_2=\theta_4\beta_3\beta_2\beta_3$;
      \item
        $g\beta_j\beta_{j+1}\beta_j=g\beta_{j+1}\beta_j\beta_{j+1}$
                for all tight $n$-ary $g$. 
  \end{enumerate}
\end{proposition}

\proof
Once again  (a) and (b) are clearly equivalent and (b) is a special case of (c).  Thus it remains to prove that
 (b) implies (c).  First observe that if $g=g'\circ_1\theta_4$ then
\begin{align*}
  g\beta_2\beta_3\beta_2 &= (g'\circ_1\theta_4)\beta_2\beta_3\beta_2 
  \\
                         &= g'\circ_1\theta_4\beta_2\beta_3\beta_2
                           \tag{by \eqref{eq:a}} \\
                         &= g'\circ_1\theta_4\beta_3\beta_2\beta_3
                           \tag{by
    (b)} \\
                         &= (g'\circ_1\theta_4)\beta_3\beta_2\beta_3 \tag{by \eqref{eq:a}} \\
  &= g\beta_3\beta_2\beta_3 
\end{align*}
and now if $g=g''\circ_1\theta_{j-1}$ then
\begin{align*}
  g\beta_j\beta_{j+1}\beta_j &=
                               (g''\circ_1\theta_{j-1})\beta_j\beta_{j+1}\beta_j
  \\
                             &=
                               g''\beta_2\beta_3\beta_2\circ_1\theta_{j-1}
                               \tag{by \eqref{eq:e}} \\
                             &= g''\beta_3\beta_2\beta_3\circ_1\theta_{j-1} \tag{by
    preceding calculation} \\
                             &=
                               (g''\circ_1\theta_{j-1})\beta_{j+1}\beta_j\beta_{j+1}
                               \tag{by \eqref{eq:e}} \\
  &= g\beta_{j+1}\beta_j\beta_{j+1}.
\end{align*}
\endproof

This completes the proof of Theorem~\ref{thm:tight-parts}.
%
%
We now turn to the loose maps.  A loose $m$-ary morphism $g$ can be written as $g'\circ_1 u$ for a unique tight $(m+1)$-ary morphism, where $u$ is the (loose) nullary
morphism classifier.  If $1\le j<m$ we define $g\beta_j= g'\beta_{j+1}\circ_1 u$.
This clearly gives an action of the braid groups on loose
morphisms and moreover this definition is forced by
the requirement that the equation \eqref{eq:e} hold for loose
morphisms, as it must in a braided skew multicategory;
furthermore, the actions on tight and loose maps are mutually
compatible.

\begin{lemma} \label{lemma:f=u} Equations \eqref{eq:b}--\eqref{eq:e} hold for
  all tight $g$ with $f=u$.
\end{lemma}

\begin{proof}
For \eqref{eq:e}, first consider the case that $i=1$. This says
that $g\beta_j\circ_1 u=(g\circ_1 u)\beta_{j-1}$ and holds by
definition of the right hand side.  
For $i >1$ write $g=h\circ_1\theta_i$ with $h$ tight. Then
\begin{align*}
  g\beta_j \circ_i u &= (h\circ_1\theta_i)\beta_j\circ_i u \\ 
                     &= (h\beta_{j-i+1}\circ_1 \theta_i)\circ_i u
                       \tag{by \eqref{eq:e}} \\
  &= h\beta_{j-i+1}\circ_1(\theta_i\circ_i u) \\
                      &= (h\circ_1(\theta_i\circ_i u))\beta_{j-1}
                       \tag{by \eqref{eq:e} and fact that $\theta_i\circ_i u$ is tight} \\
                     &= ((h\circ_1 \theta_i)\circ_i u)\beta_{j-1} \\
  &= (g\circ_i u)\beta_{j -1}.
\end{align*}

For \eqref{eq:c} we first observe that the special case $\theta_{3}\beta_{2} \circ_{3} u = \theta_{3} \circ_{2} u$ is equivalent to the equality in Proposition~\ref{prop:s.r}.  Thus it remains to show that the general case follows from this special case.  For this let us write $g = g' \circ \theta_{j+1}$.
\begin{align*}
  g\beta_j\circ_{j+1} u &= (g' \circ_1 \theta_{j+1})\beta_{j} \circ_{j+1} u \\
                    &=  ((g' \circ_1 \theta_{3}) \circ_1 \theta_{j-1})\beta_{j} \circ_{j+1} u  \\
                     &= ((g' \circ_1 \theta_{3})\beta_2 \circ_1 \theta_{j-1}) \circ_{j+1} u \tag{by \eqref{eq:e}} \\
                     &= ((g' \circ_1 \theta_{3}\beta_2)
                       \circ_1\theta_{j-1}) \circ_{j+1} u \tag{by
                       \eqref{eq:a}} \\
                        &=
                          (g'\circ_1(\theta_3\beta_2\circ_1\theta_{j-1}))\circ_{j+1}
                          u \\
                       &= g'\circ_1((\theta_3\beta_2\circ_1\theta_{j-1})\circ_{j+1}u) \\
                       &= g'\circ_1 ((\theta_3\beta_2\circ_3 u)\circ_1\theta_{j-1}) \\
                       &= g'\circ_1 ((\theta_3\circ_2 u)\circ_1\theta_{j-1}) \tag{by special case}\\
                       &= g'\circ_1 ((\theta_3\circ_1\theta_{j-1})\circ_j u) \\
 &= (g'\circ_1\theta_{j+1})\circ_j u \\
                   &= g \circ_{j} u.
\end{align*}

For \eqref{eq:d} observe that the special case
$\theta_{3}\beta_{2} \circ_{2} u = \theta_{3} \circ_{3} u$ is
equivalent to the equality in Proposition~\ref{prop:s.r1}, and
use a similar argument to show that the general case follows
from this special case.

Finally for \eqref{eq:b} write $g=h\circ_1 \theta_{j+1}$ with $h$
tight. Then 
\begin{align*}
  g\beta_j\circ_i u &= (h\circ_1 \theta_{j+1})\beta_j \circ_i u \\
                    &= (h\circ_1 \theta_{j+1}\beta_j)\circ_i u \tag{by \eqref{eq:a}} \\
                    &= (h\circ_{i-j+1} u)\circ_1 \theta_{j+1}\beta_j \\
  &= ((h\circ_{i-j+1}u)\circ_1 \theta_{j+1})\beta_j \tag{by
    \eqref{eq:a} and fact that $h\circ_{i-j+1}u$ is tight}
  \\
                    &= ((h\circ_1 \theta_{j+1})\circ_i u)\beta_j \\
  &= (g\circ_i u)\beta_j. \qedhere
\end{align*}
\end{proof}

To complete the proof of Theorem~\ref{thm:multicat} it remains to show:

\begin{proposition}
  The remaining conditions \eqref{eq:a}--\eqref{eq:e} also hold when
  $f$ or $g$ are loose. 
\end{proposition}

\proof
Consider \eqref{eq:e}.  If $g$ is tight, but $f$ is loose, write $f=f'\circ_1 u$
with $f'$ tight of arity $n+1$; then
\begin{align*}
  g\beta_j \circ_i f &= g\beta_j \circ_i(f'\circ_1 u) \\
                     &= (g\beta_j\circ_i f')\circ_i u \\
                     &= (g\circ_i f')\beta_{j+n}\circ_i u
                       \tag{\text{\eqref{eq:e}} for tight morphisms} \\
                     &= ((g\circ_i f')\circ_i u) \beta_{j+n-1}
                       \tag{by Lemma~\ref{lemma:f=u}} 
                   \\
                     &= (g\circ_i (f'\circ_1 u))\beta_{j+n-1} \\
  &= (g\circ_i f)\beta_{j+n-1} .
\end{align*}
If $g$ is loose, write $g=g'\circ_1 u$
with $g'$ tight, and then observe that
\begin{align*} 
  g\beta_j \circ_1 u &= (g'\circ_1 u)\beta_j \circ_1 u \\
                     &= (g'\beta_{j+1}\circ_1 u)\circ_1 u \tag{defn} \\
                     &= (g'\beta_{j+1}\circ_2 u)\circ_1 u \\
                     &= (g'\circ_2 u)\beta_j\circ_1 u \tag{previous case} \\
                     &= ((g'\circ_2u)\circ_1 u)\beta_{j-1} \tag{defn} \\
                     &= ((g'\circ_1 u)\circ_1 u)\beta_{j-1} \\
  &= (g\circ_1 u)\beta_{j-1} \\
  g\beta_j \circ_i f &= (g'\circ_1u)\beta_j\circ_i f \\
                     &= (g'\beta_{j+1}\circ_1 u)\circ_i f
                        \tag{defn} \\
                     &= ((g'\beta_{j+1}\circ_{i+1} f)\circ_1 u \\
                     &= ((g'\circ_{i+1} f)\beta_{j+n}\circ_1 u
                       \tag{previous case} \\
                     &= ((g'\circ_{i+1}f)\circ_1u)\beta_{j+n-1}
                        \tag{previous case} \\
                     &= ((g'\circ_1 u)\circ_i f)\beta_{j+n-1} \\
  &= (g\circ_i f)\beta_{j+n-1} 
\end{align*}
which completes all cases of \eqref{eq:e}.

Next we do \eqref{eq:a}. If $f$ is tight but $g$ loose, write
$g=g'\circ_1 u$ with $g'$ tight. Then
\begin{align*}
  f\circ_i g\beta_j &= f\circ_i (g'\circ_1 u)\beta_j \\
                    &= f\circ_i (g'\beta_{j+1}\circ_1u)  \tag{defn} \\
                    &= (f\circ_ig'\beta_{j+1})\circ_iu \\
                    &= (f\circ_i g')\beta_{i+j}\circ_i u \tag{by
  \eqref{eq:e}} \\
                    &= ((f\circ_i g')\circ_iu)\beta_{i+j-1}  \tag{by
                      \eqref{eq:e}}\\
  &= (f\circ_i g)\beta_{i+j-1}.
\end{align*}
If $f$ is loose, write $f=f'\circ_1
u$ with $f'$ tight. Then
\begin{align*}
  f\circ_i g\beta_j &=((f'\circ_1u)\circ_i g\beta_j \\
                    &= (f'\circ_{i+1}g\beta_j)\circ_1 u \\
                    &= (f'\circ_{i+1} g)\beta_{i+j}\circ_1 u
                      \tag{previous case} \\
                    &= ((f'\circ_{i+1} g)\circ_1 u)\beta_{i+j-1}
                      \tag{by \eqref{eq:e}} \\
  &= (f\circ_i g)\beta_{i+j-1}
\end{align*}
which completes all cases of \eqref{eq:a}. 

Now consider \eqref{eq:b}.   If $g$ is tight but $f$ loose, write $f=f'\circ_1 u$ with $f'$ tight;
then 
\begin{align*}
  g\beta_j\circ_i f &= g\beta_j \circ_i (f'\circ_1 u) \\
                    &= (g\beta_j\circ_i f')\circ_i u \\
                    &= (g\circ_i f')\beta_j\circ_i u  \tag{\text{by
                      \eqref{eq:b} for tight morphisms}} \\
                    &= ((g\circ_i f')\circ_i u)\beta_j \tag{by Lemma~\ref{lemma:f=u}} \\
  &= (g\circ_i f)\beta_j.
\end{align*}
If $g$ is loose write $g=g'\circ_1 u$ with $g'$ tight:
\begin{align*}
  g\beta_j\circ_i f &= (g'\circ_1 u)\beta_j\circ_i f  \\
                    &= (g'\beta_{j+1}\circ_1 u)\circ_i f \tag{by Lemma~\ref{lemma:f=u}} 
                     \\
                    &= (g'\beta_{j+1}\circ_{i+1}f)\circ_1 u \\
                    &= (g'\circ_{i+1}f)\beta_{j+1}\circ_1 u
                      \tag{\text{previous case}} \\
                    &= ((g'\circ_{i+1}f)\circ_1 u)\beta_{j} \tag{by \eqref{eq:e}} \\
  &= (g\circ_if)\beta_j.
\end{align*}
This completes \eqref{eq:b}. 

Next we do \eqref{eq:c}, so that $j=i-1$. If $g$ is tight and $f$ loose, write
$f=f'\circ_1 u$ with $f'$ tight; then:
\begin{align*}
  g\beta_j\circ_{j+1} f &= g\beta_j \circ_{j+1} (f'\circ_1 u) \\
                    &= (g\beta_j\circ_{j+1} f')\circ_{j+1} u \\
                    &= (g\circ_{j}
                      f')\beta_{j+n}\ldots\beta_j\circ_{j+1} u \tag{by \eqref{eq:c} for tight morphisms} \\
                    &=
                      (g\circ_{i-1}f')\beta_{j+n}\ldots\beta_{j+1}\circ_{j}u
                      \tag{by Lemma~\ref{lemma:f=u}}  \\
                    &= ((g\circ_{i-1}f')\circ_{i-1}u)
                      \beta_{j+n-1}\ldots\beta_j  \tag{by
                      repeated use of \eqref{eq:e}} \\
  &= (g\circ_{i-1}f)\beta_{j+n-1}\ldots\beta_j
\end{align*} 
while if $g$ is loose we may write $g=g'\circ_1 u$ and 
\begin{align*}
  g\beta_j\circ_{j+1} f &= (g'\circ_1 u)\beta_j\circ_{j+1} f  \\
                    &= (g'\beta_{j+1}\circ_1 u)\circ_{j+1} f
                      \tag{defn}
                    \\
                    &= (g'\beta_{j+1}\circ_{j+2}f)\circ_1 u \\
                    &=
                      (g'\circ_{i}f)\beta_{j+n}\ldots\beta_{j+1}\circ_1
                      u \tag{previous case} \\
                    &= ((g'\circ_{i}f)\circ_1
                      u)\beta_{j+n-1}\ldots\beta_j \tag{by
                      repeated use of \eqref{eq:e}} \\
  &= (g\circ_{i-1}f)\beta_{j+n-1}\ldots\beta_j
\end{align*}
completing the proof of \eqref{eq:c}. 

That leaves only \eqref{eq:d}, where $j=i$. This is entirely
analogous to the case of \eqref{eq:c}, and is left to the
reader. 
\endproof

\bibliographystyle{plain}

\begin{thebibliography}{10}


\bibitem{BaezNeuchl}
John~C. Baez and Martin Neuchl.
\newblock Higher-dimensional algebra. {I}. {B}raided monoidal {$2$}-categories.
\newblock {\em Adv. Math.}, 121(2):196--244, 1996.

\bibitem{bourko-skew}
John Bourke.
\newblock Skew structures in 2-category theory and homotopy theory.
\newblock {\em J. Homotopy Relat. Struct.}, 12(1):31--81, 2017.

\bibitem{bourke-lack}
John Bourke and Stephen Lack.
\newblock Skew monoidal categories and skew multicategories.
\newblock{\em J. Alg.} 506:237--266, 2018. 

\bibitem{DuninMudrov}
  J. Donin and A. Mudrov.
  \newblock Quantum groupoids and dynamical categories.
  \newblock {\em J. Alg.}, 296:348--384, 2006.
  
\bibitem{Elmendorf2009Permutative}
A.~D.~Elmendorf and M.~A.~Mandell, 
\newblock Permutative categories, multicategories, and algebraic K-theory.
\newblock {\em Alg. Geom. Top.}, 9:2391--2441, 2009.


\bibitem{Gurski-BraidedMonoidalBicategories}
Nick Gurski.
\newblock Loop spaces, and coherence for monoidal and braided monoidal
  bicategories.
\newblock {\em Adv. Math.}, 226(5):4225--4265, 2011.

\bibitem{Hyland2002Pseudo}
Martin Hyland and John Power.
\newblock Pseudo-commutative monads and pseudo-closed 2-categories.
\newblock {\em Journal of Pure and Applied Algebra 175\/}:141--185, 2002.

\bibitem{JoyalStreet-Como}
Andr\'e Joyal and Ross Street.
\newblock An introduction to {T}annaka duality and quantum groups.
\newblock In {\em Category theory ({C}omo, 1990)}, volume 1488 of {\em Lecture
  Notes in Math.}, pages 413--492. Springer, Berlin, 1991.

\bibitem{JoyalStreet-braided}
Andr{\'e} Joyal and Ross Street.
\newblock Braided tensor categories.
\newblock {\em Adv. Math.}, 102(1):20--78, 1993.

\bibitem{Kassel-book}
Christian Kassel.
\newblock Quantum groups.
\newblock {\em Graduate Texts in Mathematics} 155, Springer-Verlag New
York 1995.

\bibitem{skew}
Stephen Lack and Ross Street.
\newblock Skew monoidales, skew warpings and quantum categories.
\newblock {\em Theory Appl. Categ.}, 26:385--402, 2012.

\bibitem{skewcoherence}
Stephen Lack and Ross Street.
\newblock Triangulations, orientals, and skew monoidal categories.
\newblock {\em Adv. Math.}, 258:351--396, 2014.

\bibitem{Lopez-Franco2011Pseudo}
Ignacio L\'{o}pez-Franco.
\newblock Pseudo-commutativity of KZ 2-monads.
\newblock {\em Advances in Mathematics 228}, 5:2557--2605, 2011.

\bibitem{Street-SkewClosed}
Ross Street.
\newblock Skew-closed categories.
\newblock {\em J. Pure Appl. Algebra}, 217(6):973--988, 2013.

\bibitem{Street-book}
Ross Street.
\newblock Quantum groups: a path to current algebra.
\newblock {\em Australian Math. Society Lecture Series 19} (Cambridge University Press; 18 January 2007; ISBN-978-0-521-69524-4).

\bibitem{Szlachanyi-skew}
Korn{\'e}l Szlach{\'a}nyi.
\newblock Skew-monoidal categories and bialgebroids.
\newblock {\em Adv. Math.}, 231(3-4):1694--1730, 2012.


\end{thebibliography}

\end{document}